\def\im{\mathrm{im}}
\theoremstyle{plain}
\newtheorem*{mc}{Meta-Conjecture}
\newtheorem{theorem}{Theorem}[section]
\newtheorem{proposition}[theorem]{Proposition}
\newtheorem{fact}[theorem]{Fact}
\newtheorem{lemma}[theorem]{Lemma}
\newtheorem{corollary}[theorem]{Corollary}
\newtheorem{claim}{Claim}
\newtheorem*{clm}{Claim}
\theoremstyle{definition}
\newtheorem{definition}[theorem]{Definition}
\newtheorem{remark}[theorem]{Remark}
\newtheorem{question}[theorem]{Question}
\newcommand{\be}{\begin{enumerate}}
\newcommand{\bi}{\begin{itemize}}
\newcommand{\bd}{\begin{definition}}
\newcommand{\bt}{\begin{theorem}}
\newcommand{\bl}{\begin{lemma}}
\newcommand{\bc}{\begin{corollary}}
\newcommand{\bft}{\begin{fact}}
\newcommand{\bp}{\begin{proposition}}
\newcommand{\br}{\begin{remark}}
\newcommand{\er}{\end{remark}}
\newcommand{\ep}{\end{proposition}}
\newcommand{\ef}{\end{fact}}
\newcommand{\ec}{\end{corollary}}
\newcommand{\ee}{\end{enumerate}}
\newcommand{\ei}{\end{itemize}}
\newcommand{\ed}{\end{definition}}
\newcommand{\et}{\end{theorem}}
\newcommand{\el}{\end{lemma}}
\newcommand{\bpf}{\begin{proof}}
\newcommand{\bpfc}{\begin{proof}[Proof of Claim]}
\newcommand{\epf}{\end{proof}}
\def\Ind#1#2{#1\setbox0=\hbox{$#1x$}\kern\wd0\hbox to 0pt{\hss$#1\mid$\hss}
\lower.9\ht0\hbox to 0pt{\hss$#1\smile$\hss}\kern\wd0}
\def\Notind#1#2{#1\setbox0=\hbox{$#1x$}\kern\wd0\hbox to 0pt{\mathchardef
\nn="3236\hss$#1\nn$\kern1.4\wd0\hss}\hbox to 0pt{\hss$#1\mid$\hss}\lower.9\ht0
\hbox to 0pt{\hss$#1\smile$\hss}\kern\wd0}
\def\acl{\mathrm{acl}}
\def\dcl{\mathrm{dcl}}
\def\M{\mathfrak M}
\def\A{\mathfrak A}
\def\tp{\mbox{tp}}
\def\ale{\lesssim}
\def\aeq{\sim}
\def\ann{\mathrm{ann}}
\def\aann{\widetilde\ann}
\def\id{\mathrm{id}}
\def\dom{\mathrm{dom}\,}
\def\coker{\mathrm{coker}\,}
\def\tp{\mathrm{tp}}
\def\bdn{\mathrm{bdn}}
\def\rbdn{\mathrm{rbdn}}
\def\C{\tilde C}
\def\Z{\tilde Z}
\def\Q{\mathbb Q}
\begin{document}
\title{On $\omega$-categorical groups and rings of finite burden}
\author{Jan Dobrowolski}
\address{Instytut Matematyczny, Uniwersytetu Wroc\l awskiego, pl.\ Grunwaldzki 2/4, 50-383 Wroc\l aw\newline
\indent {\em and}\newline
\indent School of Mathematics, University of Leeds, Leeds LS2 9JT, UK}
\email{dobrowol@math.uni.wroc.pl}
\email{J.Dobrowolski@leeds.ac.uk}

\author{Frank O. Wagner}
\address{Universit\'e de Lyon; Universit\'e Claude Bernard Lyon 1; CNRS; Institut Camille Jordan UMR5208, 43 bd du 11 novembre 1918, 69622 Villeurbanne Cedex, France}

\email{wagner@math.univ-lyon1.fr}

\keywords{finite burden, $\omega$-categorical, group, ring, virtually abelian, virtually null}
\date{\today}
\subjclass[2010]{03C45}
\thanks{Partially supported by ANR-13-BS01-0006 ValCoMo, by
European Union's Horizon 2020 research
and innovation programme under the Marie Sklodowska-Curie grant agreement No 705410, and by the Foundation for 
Polish Science (FNP)}
\begin{abstract}An $\omega$-categorical group of finite burden is virtually finite-by-abelian; 
an $\omega$-categorical ring of finite burden is virtually finite-by-null; an $\omega$-categorical NTP$_2$ ring is nilpotent-by-finite.\end{abstract}
\maketitle

\section{Introduction}
A structure $\M$ is {\em $\omega$-categorical} if its theory has a unique countable model up to isomorphism. 
Basic examples include the pure set, the dense linear order, the random graph, and vector spaces over a finite field. A fundamental theorem by Ryll-Nardzewski
(proven independently by Svenonius and Engeler, \cite{RN59,S59,E59}) states that a structrue is $\omega$-categorical if and only if in any arity there are only finitely many parameter-free definable sets, up to equivalence.

There is a long history of study of $\omega$-categorical groups. In the general case, the 
main result is Wilson's classification of characteristically simple $\omega$-categorical groups as either elementary abelian, certain groups of functions from Cantor space to some finite simple group, or perfect $p$-groups (see Fact \ref{Wilson}); he conjectured that the third possibility is impossible (but this is still open). While a complete classification of all $\omega$-categorical groups
(and rings) appears out of reach at present, the question seems accessible under some model-theoretic tameness assumptions, giving rise to the following meta-conjecture:
\begin{mc}\begin{enumerate}\item A tame $\omega$-categorical group or ring is virtually nilpotent.
\item A supertame $\omega$-categorical group is virtually finite-by-abelian; a supertame $\omega$-categorical ring is virtually finite-by-null.\end{enumerate}\end{mc}
(Recall that a group/ring is {\em virtually $P$} if it has a finite index subgroup/-ring which is $P$; it is {\em finite-by-$P$} if it has a normal subgroup/ideal $I$ such that it is $P$ moulo $I$; moreover a ring is {\em null} if multiplication is trivial.) Of course, one has to specify the precise meaning of tame.

We shall prove a general theorem about $\omega$-categorical bilinear quasi-forms of {\em finite burden}, and deduce Conjecture (2) in the finite burden case; moreover, we show (1) for rings with NTP$_2$. Here, NTP$_2$ is a combinatorially defined very general model-theoretic tameness condition currently under intense investigation in neostability theory, and burden, also called {\em inp-rank}, is a cardinal-valued rank well defined (i.e.\ not assuming value $\infty$) precisely on the class of NTP$_2$ theories,  thus providing a hierarchy inside of this class (see Definition \ref{inp_def}). The principal examples of structures of burden $1$ are real closed fields (and expansions thereof with (weakly) $o$-minimal theories),
the valued fields of $p$-adic numbers for any prime $p$, valued algebraically closed fields, Presburger arithmetic 
$(\mathbb{Z}, 0,+,<)$, as well as the random graph (and any other weight one simple theory, by \cite[Proposition 8]{Ad}).
By sub-multiplicativity of burden \cite[Theorem 2.5]{Ch}, finite burden structures include all structures interpretable in 
inp-minimal ones, e.g.\ algebraic groups over the fields of real, complex and $p$-adic numbers.
For more details on burden and related topics see \cite{Ch} or \cite{Ad}.

\subsubsection*{History of results} If tame is read as {\em stable}, then (1) has been shown for groups by Felgner \cite{Fe78} and for rings by Baldwin and Rose \cite{BR77}; (2) has been shown by Baur, Cherlin and Macintyre \cite{BCM79}; if tame means {\em simple}, then the group case of (2) has been shown by Evans and Wagner \cite{EW00}, and if tame means {\em NSOP} 
(so, in particular, if it means simple), the group case of (1) has been shown by Macpherson \cite{M88}. Finally, if tame is taken as {\em dependent}, then (1) has been shown by Krupinski \cite{Kr}, assuming in addition {\em finitely satisfyable generics} for the group case. Moreover, building on work of Baginski \cite{Ba09}, he proves that the versions of (2) for {\em nilpotent} groups and for rings are equivalent \cite{Kr11} (in fact he does not explicitly cover the case with finite normal subgroups/ideals, but his proof adapts), and the group version of (1) implies that for rings. In particular, (1) also holds for NSOP rings. Finally, Kaplan, Levi and Simon \cite{KLS} show (1) for dependent groups of burden $1$. 
Note that extraspecial $p$-groups \cite{Fe} yield an example showing that the finite normal subgroup cannot be avoided in (2), unless one assumes the existence of connected components (which holds, for instance, in dependent theories).

An earlier version of this paper \cite{DW} obtained the same results under the stronger hypothesis of burden $1$; virtually the only consequence used was that any two definable groups are comparable with respect to almost inclusion. 
For the generalisation to the finite burden case, we use essentially the same proof; considerable work is being spent to show that all the relevant groups are still comparable with respect to almost inclusion in a minimal counterexample of finite burden.

The paper is organized as follows: In Section \ref{inp}, we recall the definition of burden, and  deduce some algebraic consequences when the burden is finite.  In section \ref{addrel}, we introduce additive relations and the ring of quasi-endomorphisms; in Section \ref{quasi}, we study the properties of quasi-homomorphisms under the assumption of $\omega$-categoricity. In Section \ref{bil}, we generalize the notion of a bilinear form using quasi-homomorphisms instead of homomorphisms.
In section \ref{vat}, we
prove our Main Theorem, Theorem \ref{trivial_by_finite}, about virtual almost triviality of bilinear quasi-forms. This is applied in Section \ref{main_results} to obtain the results about groups and rings. 
In Section \ref{q+r}, we state some questions and we prove that $\omega$-categorical
rings with NTP$_2$ are virtually nilpotent.

We would like to thank the anonymous referee for his careful reading, and for pointing out a missing assumption in what is now Lemma \ref{epi}.

\section{Burden}\label{inp}
Throughout the paper we will work in a monster model of the relevant complete theory 
(i.e.\ a $\bar{\kappa}$-saturated, $\bar{\kappa}$-homogeneous model, where $\bar{\kappa}$ is a sufficiently big cardinal number). Definability of a set is with parameters, and includes imaginary sets, i.e.\ definable sets modulo definable equivalence relations (as we shall want to talk about the quotient of a definable group by a definable normal subgroup). For the basic notions of model theory, the reader may want to consult\cite{H93},  \cite{Po} or \cite{TZ}.

\begin{definition}\label{inp_def}
 \begin{enumerate}
\item Let $\kappa$ be a cardinal number. An \emph{inp-pattern of depth $\kappa$ in a partial type
$\pi(\overline{x})$} is a sequence 
$\langle \varphi_i(\overline{x}, \overline{y}_i) : i < \kappa \rangle$ of formulas and an array 
$\langle \overline{a}_{i,j} : i < \kappa, j < \omega \rangle$ of parameters such that:
\begin{enumerate}
\item For each $i < \kappa$, there is some $k_i < \omega$ such that
$\{\varphi_{i}(\overline{x}, \overline{a}_{i, j}) : j < \omega \}$ is $k_i$-inconsistent; and
\item For each $\eta : \kappa \rightarrow \omega$, the partial type 
$$\pi(\overline{x}) \cup \{\varphi_i(\overline{x}, \overline{a}_{i, \eta(i)}) : i < \kappa \}$$ is consistent.
\end{enumerate}
\item The \emph{burden} (or \emph{inp-rank}) of a partial type $\pi(\overline{x})$ is the maximal 
$\kappa$ such that there is an inp-pattern of depth $\kappa$ in $\pi(\overline{x})$, if such a maximum 
exists. In case there are inp-patterns of depth $\lambda$ in $\pi(\overline{x})$ for every cardinal 
$\lambda < \kappa$ but no inp-pattern of depth $\kappa$, we say that the burden of 
$\pi(\overline{x})$ is $\kappa_-$. We will denote the burden of $\pi(\overline{x})$ by 
$\bdn(\pi(\overline{x}))$. By the burden of a type-definable set we mean the burden of a type
defining this set (this, of course, does not depend on the choice of the type). A theory $T$ is called \emph{strong}, if the burden of any partial type in finitely many variables is bounded by $(\aleph_0)_{-}$.
\end{enumerate}
\end{definition}
Note that the formulas $\varphi_i$ can be taken parameter-free, as we may incorporate eventual parameters into the $\bar a_{i,j}$. Clearly, burden does not depend on the base parameters.
\begin{remark}\label{bdn_product}
 Suppose $k=\bdn(\pi(\overline{x}))$  and $l=\bdn(\rho(\overline{y}))$ are finite, where $\overline{x}$
 and $\overline{y}$ are disjoint. Then $\bdn(\pi(\overline{x})\cup\rho(\overline{y}))\geq k+l$.
 In other words, for type-definable sets $V$ and $W$ of finite burden we have:
 $\bdn(V\times W)\geq \bdn(V)+\bdn(W)$.
\end{remark}
\begin{proof}
This is clear, as the concatenation of an inp-pattern in $\pi(\overline{x})$ with an
inp-pattern in $\rho(\overline{y})$ is an inp-pattern in 
$\pi(\overline{x})\cup\rho(\overline{y})$.
\end{proof}

\begin{remark}\label{bdn_cover}
Suppose  $f:V\to W$ is definable and all fibres of $f$ have size at most $k$, where $k<\omega$. Then
 $\bdn(V)\leq \bdn(W)$.
\end{remark}
\begin{proof}
Suppose 
$\langle \varphi_i(v, y_i) : i < \kappa \rangle$ together with
$\langle {a}_{ij} : i < \kappa, j < \omega \rangle$ form an inp-pattern in $V$.
We may assume that $\langle {a}_{ij}:j<\omega \rangle$ are pairwise distinct for any $i<\kappa$.
Put $\psi_i(w,y):=(\exists v)(\varphi_i(v, y_i)\wedge f(v)=w)$ for $ i < \kappa$. We claim that these form an inp-pattern in $W$ (with the same parameters). Indeed, for any $i<\kappa$, if $\ell_i$ is such that
$\{\varphi_{i}(v, a_{i, j}) : j < \omega \}$ is $\ell_i$-inconsistent, then by the pigeonhole principle
$\{\psi_{i}(w, a_{i,j}) : j < \omega \}$ is $(\ell_i-1)k+1$ inconsistent. Also, for each
$\eta : \kappa \rightarrow \omega$, if $v_0\in V$ satisfies $\varphi_{i}(v, a_{i, \eta(i)})$ for each $i<\kappa$,
then $f(v_0)\in W$ satisfies $\psi(w, a_{i,\eta(j)})$ for each $i\in \kappa$.
\end{proof}
 
For the next results we introduce some notation for subgroups $H$ and $K$ of a group $G$. We say that $H$ is {\em almost contained} in 
$K$, denoted $H\ale K$, if $H\cap K$ has finite index in $H$. If $H\ale K$ and $K\ale H$, 
the two groups are {\em commensurable}, denoted $H\aeq K$. The {\em almost centraliser} of $H$ is defined as 
$$\C_G(H)=\{g\in G:H\ale C_H(g)\},$$
and the {\em almost centre} of $G$ is $\Z(G)=\C_G(G)$.

The following fact is a special case of \cite[Theorem 2.10]{Hempel}. Recall that the ambient model should be sufficiently saturated. So we cannot just add predicates for $H$ and $K$.
\bft\label{Csymmetry} If $H$ and $K$ are definable, then $H\ale\C_G(K)$ if and only if $K\ale\C_G(H)$.\ef
We now turn to the consequences of finite burden we use.
\bft[{\cite[Corollary 2.3]{CKS}}]\label{reducible-cap} Let $G$ be an abelian group with NTP$_2$ and $\langle H_i:i\in I\rangle$ a family of uniformly definable subgroups. Then there is $n$ such that for all $I_0\subseteq I$ of size at least $n$ there is $i_0\in I_0$ with $\bigcap_{i\in I_0\setminus\{i_0\}}H_i\ale H_{i_0}$.\ef
Thus any irreducible intersection $\bigcap_{i<n}H_i$ (meaning that $\bigcap_{j\not=i}H_j\not\ale H_i$ for all $i<n$) of uniformly definable groups has its size $n$ bounded as a function of the formula used to define the $H_i$.
\bl\label{reducible} Let $G$ be an abelian group of finite burden, and $\langle H_i:i<n\rangle$ definable subgroups of $G$.
If the sum $\sum_{i<n}H_i$ is irreducible (meaning that $H_i\not\ale\sum_{j\not=i}H_j$ for all $i<n$), then $n\le\bdn(G)$.\el
\bpf Let $\varphi_i(x,y)$ be the formula $x-y\in\sum_{j\not=i} H_j$, and choose $\langle a_{i,j}:j<\omega\rangle$ to be representatives in $H_i$ for distinct cosets of $\sum_{j\not=i} H_j$. Then $\langle\varphi(x,a_{i,j}):j<\omega\rangle$ is $2$-inconsistent, and consistency of any path $\sigma\in\omega^n$ is witnessed by $\sum_{i<n}a_{i,\sigma(i)}$. So we obtain an inp-pattern of depth $n$.\epf

\section{Additive relations and quasi-endomorphisms}\label{addrel}
We extend the construction of the definable quasi-endomorphisms ring from \cite[Section 3.2]{BCM79} to non-connected groups.

\bd\label{d:additive}  Let $G$ and $H$ be abelian groups. An {\em additive relation} between $G$ and $H$ is a subgroup $R\le G\times H$. We call $\pi_1(R)$, the projection to the first coordinate, the {\em domain} $\dom R$ and $\pi_2(R)$ the image $\im R$ of $R$; the subgroup $\{g\in G:(g,0)\in R\}$ is the {\em kernel} $\ker R$, and $\{h\in H:(0,h)\in R\}$ is the cokernel $\coker R$. If $\dom R$ has finite index in $G$ and $\coker R$ is finite, the additive relation $R$ is a {\em quasi-homomorphism} from $G$ to $H$ 
(not to be confused with quasi-homomorphism in the sense of metric groups). A quasi-homomorphism $R$ induces a homomorphism $\dom R\to H/\coker R$. If $G=H$ we call $R$ a {\em quasi-endomorphism}. Particular additive relations are $\id_G=\{(g,g):g\in G\}$ and $0_G=G\times\{0\}$.\ed
\br\label{finin=finim} Let $g\le G\times H$ be a quasi-homomorphism. Then $|G:\ker g|$ is finite if and only if $\im\,g$ is finite.\er
\bpf This is trivial for the induced homomorphism from $\dom g$ to $H/\coker g$. The result follows.\epf

\bd\bi\item
If $R\le G\times H$ is an additive relation, $g\in G$ and $K\le G$, put $R(g)=\{h\in H:(g,h)\in R\}$ and $R[K]=\bigcup_{g\in K}R(g)$.
\item If $R,R'\le G\times H$ are additive relations, put
$$R+R'=\{(a,b+b')\in G\times H:(a,b)\in R,\,(a,b')\in R'\}.$$
This is again an additive relation. If moreover $R$ and $R'$ are quasi-homomorphisms from $G$ to $H$, so is $R+ R'$. Note that $R+R'$ (as additive relations) is different from the sum when $R$ and $R'$ are considered as subgroups. Similarly for $R-R'=\{(a,b-b'):(a,b)\in R,\,(a,b')\in R'\}$.
\item
We call $R,R'\le G\times H$ {\em equivalent}, denoted $R\equiv R'$, if there is a
subgroup $G_1$ of finite index in $G$ and a finite group $F\le H$ such that
$$(R\cap(G_1\times H))+(G_1\times F)=(R'\cap(G_1\times H))+(G_1\times F).$$
This is clearly an equivalence relation.
\item
If $R\le G\times H$ and $R'\le H\times K$ are additive relations, put
$$R'\circ R=\{(a,c)\in G\times K:\exists b\,[(a,b)\in R\mbox{ and }(b,c)\in R']\}.$$
This is again an additive relation between $G$ and $K$. If $R$ and $R'$ are quasi-homomorphisms, so is $R'\circ R$. 
We denote the $n$-fold composition of $R$ with itself by $R^{\circ n}$.
\item
For an additive relation $R\le G\times H$ put
$$R^{-1}=\{(h,g)\in H\times G:(g,h)\in R\}.$$
Note that this is also an additive relation between $H$ and $G$.\ei\ed
\br\label{inverse} Note that 
$$R^{-1}\circ R=\id_{\dom R}+(\dom R\times\ker R)\quad\mbox{and}\quad R\circ R^{-1}=\id_{\im\,R}+(\im\,R\times\coker R).$$
If $\im\, R$ has finite index in $H$ and $\ker R$ is finite, then $R^{-1}$ is a
quasi-homomorphism from $H$ to $G$. If moreover $R$ is a quasi-homomorphism,
then $R\circ R^{-1}\equiv\id_H$ and $R^{-1}\circ R\equiv\id_G$.\er
By \cite[Lemma 27]{BCM79} addition is associative and commutative, multiplication is associative, $G\times\{0\}$ is an additive and the diagonal $\{(g,g):g\in G\}$ is a multiplicative identity.
\bl\label{endom} Let $G$ be an abelian group. The sum, difference and product of definable quasi-endomorphisms of $G$ is again a definable quasi-endomorphism. The set of definable quasi-endomorphisms of $G$ modulo equivalence forms an associative ring.\el
\bpf The proofs of \cite[Lemmas 29, 31 and 32]{BCM79} carry over {\em verbatim}, for the equivalence $R\doteq R'$ if there is finite $F$ with $R+(G\times F)=R'+(G\times F)$ (which is finer than our equivalence $\equiv$). In particular addition and subtraction are well-defined modulo $\doteq$ and $R-R$ is $\doteq$-equivalent to zero. Moreover, for quasi-endomorphisms $R$, $S$ and $T$ of $G$ we have\begin{enumerate}
\item $R(S+T)\subseteq RS+RT\subseteq (RS+RT)+(G\times F)$ for some finite $F\le G$.
\item $SR+TR\subseteq (S+T)R\subseteq (SR+TR)+(G\times F)$ for some finite $F\le G$.
\item Multiplication is well-defined modulo $\doteq$.\end{enumerate}
Thus the quasi-endomorphisms of $G$ modulo $\doteq$ form a ring.

However, $R\equiv R'$ if and only if there is $G_1$ of finite index in $G$ with $R\cap(G_1\times G_1)\doteq R'\cap(G_1\times G_1)$. Moreover, definability is preserved under sum, difference and product of quasi-endomorphisms. The result follows.
\epf

\section{Quasi-homomorphisms of $\omega$-categorical groups}\label{quasi}
Recall that a complete first order theory in a countable language is said to be $\omega$-categorical if
it has only one countable model up to isomorphism, and a structure $M$ is $\omega$-categorical
if $Th(M)$ is. By the Ryll-Nardzewski Theorem, this is equivalent to the following statement:
for every $n<\omega$ there are only finitely many complete $n$-types over $\emptyset$.
Hence, for any finite set $A$ in an $\omega$-categorical structure $M$
there are only finitely many definable sets over $A$, and $\omega$-categorical 
structures are uniformly locally finite (i.e. there if a function $f:\omega\to \omega$ such
that, for any $n\in \omega$, each substructure of $M$ generated by $n$ elements has at most $f(n)$ elements) \cite[Corollary 7.3.2]{H93}.
\begin{lemma}\label{add_basics}
 Let $G$ and $H$ be abelian groups, and let $g\leq G\times H$ be an additive relation. \be
\item If $\coker g$ is finite, $|H:\im\,g|$ is finite, and $H_1 \leq H$ has infinite index in $H$, then 
 $|\dom g:g^{-1}[H_1]|$ is infinite.
\item If $\ker g$ is finite, $|G:\dom g|$ is finite, and $G_1\leq G$ has infinite index, then $|\im\,g:g[G_1]|$ is infinite.
\item If $H_1 \leq H$, then $|\dom g:g^{-1}[H_1]|\le|\im\,g:\im\,g\cap H_1|$.\ee
\end{lemma}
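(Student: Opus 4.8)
The plan is to prove part (3) first by a direct counting argument inside $G\times H$, and then to deduce (1) and (2) from it by passing to the inverse relation. For (3), recall that $g^{-1}[H_1]=\{x\in G:(x,h)\in g\text{ for some }h\in H_1\}$ is a subgroup of $\dom g$. Fix a family $\{x_i:i\in I\}$ of representatives for the distinct cosets of $g^{-1}[H_1]$ in $\dom g$, and for each $i$ choose some $h_i\in g(x_i)$, which is possible since $x_i\in\dom g$. If $i\ne j$ then $h_i-h_j\notin H_1$: otherwise $(x_i-x_j,h_i-h_j)\in g$ with $h_i-h_j\in H_1$ would give $x_i-x_j\in g^{-1}[H_1]$, contradicting the choice of representatives. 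As each $h_i$ lies in $\im g$, the map $i\mapsto h_i+(\im g\cap H_1)$ is an injection of $I$ into $\im g/(\im g\cap H_1)$, whence $|\dom g:g^{-1}[H_1]|=|I|\le|\im g:\im g\cap H_1|$.

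For (1), I would apply (3) to the additive relation $g^{-1}\le H\times G$, whose domain is $\im g$ and whose image is $\dom g$, together with the subgroup $g^{-1}[H_1]\le\dom g$ of $G$; since $(g^{-1})^{-1}=g$, this yields $|\im g:g[g^{-1}[H_1]]|\le|\dom g:g^{-1}[H_1]|$, so it suffices to show that $g[g^{-1}[H_1]]$ has infinite index in $\im g$. First, $g[g^{-1}[H_1]]\subseteq\im g\cap(H_1+\coker g)$: for $x\in g^{-1}[H_1]$ the set $g(x)$ is a coset of $\coker g$ that meets $H_1$, hence $g(x)\subseteq H_1+\coker g$. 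Now the hypotheses do the rest: since $\coker g$ is finite, $|H_1+\coker g:H_1|$ is finite, so $|H:H_1+\coker g|$ is infinite (as $|H:H_1|$ is); since $|H:\im g|$ is finite, it follows that $|\im g:\im g\cap(H_1+\coker g)|$ is infinite; and $\im g\cap(H_1+\coker g)$ contains $g[g^{-1}[H_1]]$, so the latter has infinite index in $\im g$ too.

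Finally, (2) is just (1) applied to $g^{-1}\le H\times G$ with the subgroup $G_1\le G$: its cokernel $\coker g^{-1}=\ker g$ is finite, its image $\im g^{-1}=\dom g$ has finite index in $G$, and $G_1$ has infinite index in $G$, so (1) gives that $|\dom g^{-1}:(g^{-1})^{-1}[G_1]|=|\im g:g[G_1]|$ is infinite. Everything here is elementary manipulation of subgroups of $G\times H$; the only subtlety is bookkeeping around the finite cokernel — one has to work with $H_1+\coker g$ rather than $H_1$ — together with the observation that, because (3) only bounds an index from above, the lower bounds in (1) and (2) must be produced by running (3) on $g^{-1}$ rather than on $g$ itself.
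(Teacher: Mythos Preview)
Your proof is correct. The argument for (3) is essentially identical to the paper's, and your derivation of (2) from (1) via $g^{-1}$ is the same as well. The only genuine difference is in how (1) is obtained: the paper proves (1) directly by choosing infinitely many $h_i\in\im g$ pairwise inequivalent modulo $H_1+\coker g$, lifting them to $g_i\in\dom g$, and checking that the $g_i$ are inequivalent modulo $g^{-1}[H_1]$; you instead deduce (1) from (3) applied to $g^{-1}$, which forces you to bound $g[g^{-1}[H_1]]$ inside $H_1+\coker g$ and then run an index computation. The paper's direct route is a bit shorter, while your route has the appeal of making (3) the single primitive from which everything else flows --- at the cost of the extra bookkeeping with $H_1+\coker g$ that you correctly flagged.
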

\begin{proof}\be
\item Let $\langle h_i:i<\omega\rangle$ be such that $h_i-h_j\notin H_1+\coker g$ for
$i\neq j$. Since $|H:\im\,g|$ is finite, we may assume that all $g_i$ are in 
the same coset of $\im g$,  so without loss of generality they are all in $\im g$. For each $i$ let $g_i\in G$ be such that $h_i\in g(g_i)$. If 
 $g_i-g_j\in g^{-1}[H_1]$ for $i\neq j$, then there is $h\in H_1$ such that $h\in g(g_i-g_j)$, so $h-(h_i-h_j)\in\coker g$, a contradiction.
 Hence all $g_i$ are in pairwise distinct cosets modulo  $g^{-1}[H_1]$.
\item Follows from (1) applied to $g^{-1}$.
\item If elements $\langle g_i:i\in I\rangle$ are pairwise distinct modulo $g^{-1}[H_1]$
elements in $\dom g$, and $h_i\in g(g_i)$, then the elements $\langle h_i:i\in I\rangle$ are in pairwise distinct cosets modulo~$H_1$.\qedhere\ee
\end{proof}

\bl\label{epi}Let $G$ and $H$ be definable abelian groups in an $\omega$-categorical structure, and $f,g\le G\times H$ definable additive relations such that $\ker f$ and $\coker g$ are finite, $\im\,g$ has finite index in $H$, and $\dom f$ has finite index in $G$. Then $\ker g$ and $\coker f$ 
are finite, $\im f$ has finite index in $H$ and $\dom g$ has finite index in $G$.\el
\bpf Let $A$ be a finite set over which all the above objects are definable. 
\begin{clm} Suppose that $H_1<H_2\le H$ are such that $H_1$ has infinite index in $H_2$.
Then $f[g^{-1}[H_1]]$ has infinite index in $f[g^{-1}[H_2]]$.\end{clm}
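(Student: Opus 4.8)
The plan is to prove this claim without any use of saturation or $\omega$-categoricity, reducing it to two applications of Lemma~\ref{add_basics} after restricting the additive relations $g$ and $f$ to suitable subgroups.

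First I would handle the middle step, showing that $g^{-1}[H_1]$ has infinite index in $g^{-1}[H_2]$. Restrict $g$ to $\bar g:=g\cap(G\times H_2)\le G\times H_2$. Its cokernel is contained in $\coker g$, hence finite, and its image is $\im\,g\cap H_2$, which has finite index in $H_2$ because $\im\,g$ has finite index in $H$ (via the elementary estimate $|K:K\cap L|\le|H:L|$ for subgroups $K,L\le H$). Since $|H_2:H_1|$ is infinite, Lemma~\ref{add_basics}(1) applied to $\bar g$, with $H_2$ playing the role of the codomain and $H_1$ the subgroup of infinite index, gives that $\bar g^{-1}[H_1]$ has infinite index in $\dom\bar g$. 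Because $H_1\subseteq H_2$ one checks immediately that $\bar g^{-1}[H_1]=g^{-1}[H_1]$ and $\dom\bar g=g^{-1}[H_2]$, so $|g^{-1}[H_2]:g^{-1}[H_1]|$ is infinite.

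Then, writing $G_i:=g^{-1}[H_i]$, so that $G_1\le G_2\le G$ with $|G_2:G_1|$ infinite by the previous step, I would repeat the trick on the other side: restrict $f$ to $\bar f:=f\cap(G_2\times H)\le G_2\times H$. Here $\ker\bar f=\ker f\cap G_2$ is finite, and $\dom\bar f=\dom f\cap G_2$ has finite index in $G_2$ since $\dom f$ has finite index in $G$. Lemma~\ref{add_basics}(2) applied to $\bar f$, with $G_2$ as the domain group and $G_1$ the subgroup of infinite index, then gives that $\bar f[G_1]$ has infinite index in $\im\bar f$; and $G_1\subseteq G_2$ forces $\bar f[G_1]=f[G_1]$ and $\im\bar f=f[G_2]$, which is exactly the claim.

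I do not anticipate a genuine obstacle here: the whole argument is bookkeeping, the real content being the inp-pattern / coset-counting estimates already carried out in the proof of Lemma~\ref{add_basics}. The only point requiring a little care is verifying that passing to $\bar g$ and $\bar f$ preserves the relevant finiteness and finite-index hypotheses, which is where the index estimate for intersections of subgroups is used.
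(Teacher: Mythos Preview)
Your proof is correct and essentially identical to the paper's. The paper restricts to $g\cap(g^{-1}[H_2]\times H_2)$ and $f\cap(g^{-1}[H_2]\times f[g^{-1}[H_2]])$, but these coincide with your $\bar g$ and $\bar f$ (any $(x,h)\in g$ with $h\in H_2$ automatically has $x\in g^{-1}[H_2]$, and similarly on the $f$ side), and the two applications of Lemma~\ref{add_basics}(1) and~(2) are exactly as you describe.
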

\bpf As $\im\,g$ has finite index in $H$, the index of $H_1\cap\im\,g$ in $H_2\cap\im\,g$ is infinite. 
Now, $g^{-1}[H_1]$ has infinite index in $g^{-1}[H_2]$ by Lemma \ref{add_basics}(1) applied to $g\cap (g^{-1}[H_2]\times H_2)$,
so  $f[g^{-1}[H_1]]$ is a subgroup of infinite index in $f[g^{-1}[H_2]]$ by Lemma \ref{add_basics}(2) applied to $f\cap(g^{-1}[H_2]\times f[g^{-1}[H_2]])$.\epf

Suppose for a contradiction that $\ker g$ or $\coker f$ is infinite. Put $K_0=\{0\}\leq H$ and define inductively $K_{n+1}=f[g^{-1}[K_{n}]]$. Then $K_1$ is infinite; by the claim $K_n$ is a subgroup of infinite index in $K_{n+1}$ for all $n<\omega$,
contradicting $\omega$-categoricity (as this implies there are infinitely many disjoint definable sets $\langle K_{n+1}\setminus K_n:n<\omega\rangle$ over $A$ in $H$). 

Now suppose that $\im f$ has infinite index in $H$ or $\dom g$ has infinite index in $G$. 
Put $K_0=H$ and define as before $K_{n+1}=f[g^{-1}[K_{n}]]$. Then $K_1$ has infinite index in $K_0$; by the claim $K_{n+1}$ is a subgroup of infinite index in $K_{n}$ for all $n<\omega$, again contradicting $\omega$-categoricity.
\epf
\br Note that commutativity was not used in the proof. An analogous lemma holds for arbitrary groups, and {\em multiplicative} relations (with the obvious definition adapting Definition \ref{d:additive} to non-commutative groups).\er

\bl\label{kerim} Let $G$ and $H$ be abelian groups definable in an $\omega$-categorical structure, and $f,g\le G\times H$ definable quasi-homomorphisms. If $\ker f\ale \ker g$ and $\im\,f\ale\im\,g$, then $\im\,g\aeq \im f$ and $\ker g\aeq\ker f$.\el
\bpf 
Suppose $\ker f\ale\ker g$ and $\im\,f\ale\im\,g$.
Let $f_1,g_1\le G/(\ker f\cap\ker g)\times\im\,g$ be the additive relations induced by $f$ and $g$, namely
$$f_1(x+(\ker f\cap\ker g),y)\iff f(x',y)\mbox{ for some/all }x'\in x+(\ker f\cap\ker g),$$ and likewise for $g_1$.
Then $\ker f_1$ is finite since $\ker f\ale\ker g$, and $\coker g_1=g[\ker f\cap\ker g]=\coker g$ is finite, too.

Now $\im f\cap\im g$ has finite index in $\im f$, so $ f^{-1}[\im\,g\cap\im f]$ has
 finite index in $G$ by Lemma \ref{add_basics}(3); it follows that $\dom f_1= f^{-1}[\im\,g\cap\im f]/(\ker f\cap\ker g)$ has finite index in $G/(\ker f\cap\ker g)$. Moreover $\im\,g_1=\im\,g$. 
Thus $\im f_1=\im f\cap\im\,g$ has finite index in $\im\,g$ and $\ker g_1=\ker g/(\ker f\cap\ker g)$ is finite by Lemma \ref{epi}. Thus $\im f\aeq \im\,g$ and $\ker f\aeq\ker g$.\epf

\begin{corollary}\label{invertible}
Let $G$ and  $H$ be abelian groups definable in an  $\omega$-categorical theory,  $f\leq G\times G$  a definable quasi-endomorphism of $G$, and $g\leq G\times H$ a definable quasi-homomorphism.\be
\item $\ker f$ is finite if and only if $|G:\im f|$ is finite.
\item If  $G\leq H$ and $|H:\im\,g|$ is finite, then  $|H:G|$ and $\ker g$ are finite.\ee
\end{corollary}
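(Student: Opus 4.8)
The plan is to derive both parts directly from Lemma \ref{epi} by choosing, as one of the two additive relations appearing there, either the identity $\id_G$ or the inclusion relation of $G$ into $H$. Both of these are quasi-homomorphisms with trivial kernel and cokernel and full domain, so they automatically satisfy whatever finiteness hypotheses Lemma \ref{epi} imposes on the corresponding argument; pairing them with $f$ or $g$ then makes the conclusion of Lemma \ref{epi} read off exactly the assertions we want.

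For part (1), recall first that since $f$ is a quasi-endomorphism, $\dom f$ has finite index in $G$ and $\coker f$ is finite. For the forward implication, assume $\ker f$ is finite and apply Lemma \ref{epi} with $H:=G$ to the pair $(f,\id_G)$ in the roles of $(f,g)$: the hypotheses ``$\ker f$ finite'', ``$\coker\id_G=0$ finite'', ``$\im\,\id_G=G$ of finite index'', ``$\dom f$ of finite index'' all hold, so the conclusion yields in particular that $\im f$ has finite index in $G$. For the converse, assume $|G:\im f|$ is finite and apply Lemma \ref{epi} with the roles swapped, to the pair $(\id_G,f)$: now $\ker\id_G=0$ and $\coker f$ are finite, $\im f$ has finite index by hypothesis, and $\dom\id_G=G$ has finite index, so the conclusion gives that $\ker f$ (which is ``$\ker g$'' in the notation of the lemma) is finite.

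For part (2), let $\iota=\{(x,x):x\in G\}\le G\times H$ be the inclusion; it is a quasi-homomorphism with $\ker\iota=\coker\iota=0$, $\dom\iota=G$ of finite index in $G$, and $\im\iota=G$. Apply Lemma \ref{epi} to the pair $(\iota,g)$ in the roles of $(f,g)$: the hypotheses hold because $\ker\iota=0$ is finite, $\coker g$ is finite ($g$ being a quasi-homomorphism), $\im\,g$ has finite index in $H$ by assumption, and $\dom\iota=G$ has finite index in $G$. The conclusion then says precisely that $\ker g$ is finite and that $\im\iota=G$ has finite index in $H$, i.e.\ $|H:G|$ is finite.

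There is no real obstacle here: the argument is bookkeeping with Lemma \ref{epi}. The only points requiring a moment's care are to check that $\id_G$ and $\iota$ genuinely meet the finiteness requirements placed on the relevant argument of Lemma \ref{epi} in each application, and to match up correctly which of $\ker$, $\coker$, $\dom$, $\im$ one is extracting from the conclusion after the relabelling of $f$ and $g$. One may also note that part (1) is just the special case $H=G$ of the phenomenon used in part (2), with $\id_G$ substituted for the inclusion.
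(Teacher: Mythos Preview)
Your proof is correct. Both your argument and the paper's rest on pairing $f$ (resp.\ $g$) with the identity $\id_G$ (resp.\ the inclusion $\iota:G\hookrightarrow H$), so the underlying idea is the same. The only difference is that the paper routes this through Lemma~\ref{kerim} (which is itself proved via Lemma~\ref{epi}), whereas you apply Lemma~\ref{epi} directly. Your route is slightly more economical here, since the full strength of Lemma~\ref{kerim} (allowing $\ker f\ale\ker g$ and $\im f\ale\im g$ rather than exact containments) is not needed when one of the two relations is $\id_G$ or $\iota$; the paper's route has the virtue of illustrating how the corollary falls out of the more general comparison lemma that is used elsewhere.
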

\begin{proof}
 For (1), apply  Lemma \ref{kerim} to $f$ and $id_G$ for the implication, and to $id_G$ and $f$ for the converse. For (2) consider  the inclusion $i\leq G\times H$. As the assumptions imply $\im\,i\ale\im\,g$ we may apply Lemma \ref{kerim} and obtain $H\ale\im\,g\aeq\im\,i=G$ and $\ker g\aeq\ker i=\{0\}$. 
\end{proof}

\bl\label{sum}
Let $G$ be an $\omega$-categorical abelian group and $f$ a definable quasi-endomorphism of $G$. Then there is 
$n<\omega$ such that $G$ decomposes as an almost direct sum 
of $\im f^{\circ n}$ and $\ker f^{\circ n}$ (i.e. $G\aeq\im f^{\circ n}+\ker f^{\circ n}$ and 
$im f^{\circ n}\cap \ker f^{\circ n}$ is finite).\el
\bpf The $f^{\circ n}[G]$ form a descending chain of subgroups, all definable over the same finite set of parameters. 
By $\omega$-categoricity there is some $n$ such that $f^{\circ n}[G]=f^{\circ n+1}[G]=f^{\circ 2n}[G]$. 
Consider $g\in\dom f^{\circ n}$. There is $h\in f^{\circ n}[G]$ such that 
$f^{\circ n}(g)\cap f^{\circ n}(h)\not=\emptyset$. But this means $g-h\in\ker f^{\circ n}$, so 
$$G\ale \dom f^{\circ n}\le\im f^{\circ n}+\ker f^{\circ n}.$$
As  $f^{\circ n}[\im f^{\circ n}]=\im f^{\circ 2n}=\im f^{\circ n}$, the intersection 
$\im f^{\circ n}\cap\ker f^{\circ n}$ must be finite by applying Corollary \ref{invertible}(2) to 
$\im f^{\circ n}\le\im f^{\circ n}$ and $g=f^{\circ n}$.\epf

\section{Bilinear quasi-forms}\label{bil} We shall now introduce a generalization of the notion of a bilinear form. As before, definability will be with parameters in a monster model.
\bd Let $G$, $H$ and $K$ be abelian groups. A {\em bilinear quasi-form} is a partial function 
$\lambda:G\times H\to K$ such that for every $g\in G$ and $h\in H$  the partial functions
$\lambda_g:H\to K$ given by $x\mapsto\lambda(g,x)$ and $\lambda'_h:G\to K$ given by $\lambda'_h(y)=\lambda(y,h)$ are quasi-homomorphisms with trivial cokernel (i.e.\ partial homomorphisms defined on a subgroup of finite index).\ed
We shall call $\lambda$ {\em definable} if $G$, $H$, $K$ and $\lambda$ are definable.
\bd Let $\lambda:G\times H\to K$ be a bilinear quasi-form. For $g\in G$ (or $h\in H$) the {\em annihilator} of $g$ (or of $h$) is the subgroup
$$\begin{aligned}\ann_H(g)&=\{h\in H:\lambda(g,h)=0\}=\ker\lambda_g\le G\,\quad\mbox{or}\\
\ann_G(h)&=\{g\in G:\lambda(g,h)=0\}=\ker\lambda'_h\le H.\end{aligned}$$\ed
\br Of course the annihilators depend on the bilinear quasi-form; if it is not obvious from the context, we shall indicate this by a superscript: $\ann^\lambda$. \er
Suppose $\lambda:G\times H\to K$ is a bilinear quasi-form.
For any $g,g'\in G$, we shall consider the additive relation 
$\lambda_{g,g'}=\lambda_{g'}^{-1}\circ\lambda_g\le H\times H$ given by
$\{(h,h')\in H\times H:\lambda(g,h)=\lambda(g',h')\}$. Clearly
$\ker\lambda_{g,g'}=\ann_H(g)$ and $\coker\lambda_{g,g'}=\ann_H(g')$.
\bl\label{quasi-end} If $\ann_H(g')\ale\ann_H(g)$ and $\im\lambda_g\ale\im\lambda_{g'}$, then $\lambda_{g,g'}$ induces a quasi-endo\-morph\-ism $\bar{\lambda}_{g,g'}$ of $H/\ann_H(g')$ given by 
$$\bar{\lambda}_{g,g'}(x+\ann_H(g'),y+\ann_H(g'))\quad\Leftrightarrow\quad
\lambda_{g,g'}(x',y)\mbox{ for some }x'\in x+\ann_H(g').$$\el
\bpf Note first that this does not depend on the choice of $y$ in the coset $y+\ann_H(g')$, as $\ann_H(g')=\coker\lambda_{g,g'}$. 
Second, $\dom\lambda_g$ has finite index in $H$, so 
$$\dom\bar{\lambda}_{g,g'}=\dom\lambda_{g,g'}/\ann_H(g')=
\lambda_g^{-1}[\im\lambda_g\cap\im\lambda_{g'}]/\ann_H(g')$$
has finite index in $H/\ann_H(g')$ by Lemma \ref{add_basics}(3). Third, 
$$\begin{aligned}\coker\bar{\lambda}_{g,g'}&=\{h\in H: \exists\ h'\in\ann_H(g')\ \lambda(g,h')=\lambda(g',h)\}/\ann_H(g')\\
&=\lambda_{g'}^{-1}[\lambda_g[\ann_H(g')]]/\ann_H(g')\end{aligned}$$ is finite,
as $\lambda_g[\ann_H(g')]$ is finite due to $\ann_H(g')\ale\ann_H(g)$. So $\bar\lambda_{g,g'}$ is indeed a quasi-endomorphism.\epf
\bd For $A\le G$ and $B\le H$ put
$$\aann_H(A)=\{h\in H:A\ale\ann_G(h)\}\quad\mbox{and}\quad
\aann_G(B)=\{g\in G:B\ale\ann_H(g)\},$$
the {\em almost} annihilators of $A$ and $B$.\ed
As for the annihilators, the almost annihilators depend on the bilinear quasi-form $\lambda$, which will be indicated as a superscript if needed: $\aann^\lambda$.
\br We have $$\aann_H(A)=\{h\in H:\lambda'_h[A]\mbox{ is finite}\}\quad\mbox{and}\quad
\aann_G(B)=\{g\in G:\lambda_g[B]\mbox{ is finite}\}.$$\er
\bpf This follows from Remark \ref{finin=finim}.\epf

If $A\ale\ann_H(g)$ and $A\ale\ann_H(g')$ then $A\ale\ann_H(g)\cap\ann_H(g')\le\ann_H(g\pm g')$ (and symmetrically), so the almost annihilators are subgroups of $G$ and of $H$. Moreover, if $G$, $H$, $\lambda$, $A$ and $B$ are definable, they are given as a countable increasing union of sets definable over the same parameters, and will thus be definable in an $\omega$-categorical theory.

The next proposition is an adaptation of \cite[Theorem 2.10]{Hempel} to bilinear quasi-forms.
\bp\label{symmetry} Let $\lambda:G\times H\to K$ be a definable bilinear quasi-form,
and $A\le G$ and $B\le H$ be definable subgroups. Then $B\ale\aann_H(A)$ if and only
if $A\ale\aann_G(B)$.\ep
\bpf We may assume that $G$, $H$, $A$ and $B$ are defined over $\emptyset$. Suppose 
that $B\not\ale\aann_H(A)$. Consider a sequence $\langle h_i:i<\omega\rangle $ in $B$ representing 
different cosets of $\aann_H(A)$. Then $h_i-h_j\notin\aann_H(A)$ for $i\not=j$, so the 
index $|A:\ann_A(h_i-h_j)|$ is infinite. By Neumann's Lemma (\cite{BHN}) no finite union of 
cosets of the various $\ann_A(h_i-h_j)$ can cover $A$. By compactness and sufficient saturation
of the monster model, there is an infinite sequence $\langle g_k:k<\omega\rangle $ in $A$ such that 
$\lambda(g_k-g_\ell,h_i-h_j)\not=0$ for all $i\not=j$ and $k\not=\ell$.
It follows that $|B:\ann_B(g_k-g_\ell)|$ is infinite, whence  $g_k-g_\ell\notin\aann_G(B)$  for all $k\not=\ell$. Thus $A\not\ale\aann_G(B)$. 

The other direction follows by symmetry.\epf

\bd A bilinear quasi-form $\lambda$ is {\em almost trivial} if there is a finite subgroup of $K$ containing $\im\lambda$. It is {\em virtually almost trivial} if there are subgroups $G_0$ of finite index in $G$ and $H_0$ in $H$ such that the restriction of $\lambda$ to $G_0\times H_0$ is almost trivial.\ed

\bp\label{almost_trivial} Let $\lambda:G\times H\to K$ be a definable bilinear quasi-form. Then $\lambda$ is almost trivial if and only if there is a finite 
bound on the indices of $\ann_H(g)$ and $\ann_G(h)$ in $H$ and $G$, respectively, for all $g\in G$ and $h\in H$.\ep
\bpf Suppose $\im\lambda$ generates a finite group $K_0$. Since $\lambda$ is a bilinear
quasi-form, by compactness, there is a finite bound on the indices of $\dom\lambda_g$ in $H$ 
and of $\dom\lambda'_h$ in $G$. As the indices $|\dom\lambda_g:\ann_H(g)|$ and $|\dom\lambda'_h:\ann_G(h)|$ are bounded by $|K_0|$, the implication follows.

Conversely, suppose there is a finite bound $\ell$ for the indices of $\ann_H(g)$ and $\ann_G(h)$ in $H$ and in $G$ for all $g\in G$ and $h\in H$. Note that this implies that $\langle\im\lambda\rangle$ has finite exponent, as $n\,\lambda(g,h)=\lambda(g,nh)\in\lambda_g[H]$ for all $g\in G$, $h\in H$ and $n<\omega$. So it is enough to show that $\im\lambda$ is finite. 

Now $\ell$ also bounds the size of $\lambda_g[H]$ and of $\lambda'_h[G]$, for all $g\in G$ and $h\in H$, and we may consider $g\in G$ with $\lambda_g[H]$ maximal, and choose $h_0,\ldots,h_n\in H$ with
$\lambda_g[H]=\{\lambda(g,h_i):i\le n\}$. Then for $g'\in g+\bigcap_{i\le n}\ann_G(h_i)$ we have
$\lambda(g',h_i)=\lambda(g,h_i)$, whence $\lambda_{g'}[H]\supseteq\lambda_g[H]$, and $\lambda_{g'}[H]=\lambda_g[H]$ by maximality. 
Note that $\bigcap_{i\le n}\ann_G(h_i)$ is a subgroup of boundedly finite index in $G$ (i.e.\ bounded independently from
$g$). It follows that there can only be finitely many maximal sets of the form $\lambda_g[H]$ for $g\in G$, and $\im\lambda$ is finite.\epf

\bc\label{aann} Let $\lambda:G\times H\to K$ be a definable bilinear quasi-form.
The following are equivalent:\begin{enumerate}
\item $G\ale\aann_G(H)$.
\item $H\ale\aann_H(G)$.
\item $\lambda$ is virtually almost trivial.\end{enumerate}
Moreover, in this case $\aann_G(H)$ and $\aann_H(G)$ are definable.\ec
\bpf Conditions (1) and (2) are equivalent by Proposition \ref{symmetry}.

Suppose (1) and (2) hold. Put $A_n=\{g\in G:|H:\ann_H(g)|\le n\}$. Then each $A_n$ is definable, and
$\aann_G(H)=\bigcup_{n<\omega}A_n$. By compactness and (1), there are $n,k<\omega$ such that there are no 
$k$ disjoint translates of $A_n$ by elements in $G$. Let $A=\bigcup_i a_i+A_n$ be a maximal union of disjoint translates 
of $A_n$ by elements of $\aann_G(H)$. So for any $a\in\aann_G(H)$ we have $(a+A)\cap A\not=\emptyset$, 
whence $a\in A-A$. Thus $\aann_G(H)=A-A$ is definable; it follows that there is a finite bound on $|H:\ann_H(a)|$ for all $a\in\aann_G(H)$, as otherwise by sufficient saturation we could find $a\in\aann_G(H)$ such that $|H:\ann_H(a)|$ is infinite, a contradiction. 
By symmetry, the same holds for $\aann_H(G)$. Proposition \ref{almost_trivial} now implies that $\lambda$ restricted to 
$\aann_G(H)\times\aann_H(G)$ is almost trivial, so $\lambda$ is virtually almost trivial.

Conversely, if $\lambda$ is virtually almost trivial as witnessed by $G_0$ and $H_0$, then $\ann_{H_0}(g)$ has finite index in $H_0$ for $g\in G_0$, so $\ann_H(G)$ has finite index in $H$. Thus $G_0\le\aann_G(H)$, whence $G\ale\aann_G(H)$. Similarly $H\ale\aann_H(G)$.\epf

\section{Virtual almost triviality}\label{vat}
\bd Let $G$ be an infinite definable group. A complete type $p\in S_G(A)$ is {\em subgroup-generic} if $p$ is in no definable coset of a subgroup of infinite index in $G$ which has only finitely many images under automorphisms fixing $A$ (so it is $\acl^{eq}(A)$-definable). 
A sequence $\langle g_i:i\in I\rangle$ is {\em subgroup-generic} over $A$ if $\tp(g_i/A,\{g_j:j<i\})$ is subgroup-generic for all $i\in I$.\ed
Note that by Neumann's Lemma (\cite{BHN}) the group $G$ is not in the 
ideal of definable sets covered by finitely many cosets of definable subgroups of $G$ of infinite index. By a standard construction (as, for example, in \cite[Fact 2.1.3]{K}), $G$ has a subgroup-generic complete type over any set of parameters. By Ramsey's Theorem and compactness,  indiscernible subgroup-generic sequences of any order type exist.

The following notion provides a useful replacement of the notion of principal generic type in a context where no connected components exist.
\bd Let $G$ be an infinite definable group and $A$ a set of parameters. An $A$-indiscernible sequence $\langle (g_i,\bar a_i):i\in I\rangle$ is {\em principal indiscernible} if  for any $i\in I$ and $A_i=A\cup\{g_j,\bar a_j:j\not=i\}$,
whenever $C$ is an $A_i$-definable coset of some subgroup $H$ and $g_i\in C$, then $g_i\in H^0_{A_i}$, the connected component of $H$ over $A_i$. It is {\em principal subgroup-generic} if moreover $\tp(g_i/A,\{g_j,\bar a_j:j<i\})$ is subgroup-generic for all $i\in I$.\ed
Note that $H=C^{-1}C$ is also definable over $A_i$.
\bp\label{connected} Principal subgroup-generic sequence exist. More precisely,
let $\epsilon>0$ be infinitesimal, let $\langle(x_i,\bar a_i):i\in\Q\cup(\Q+\epsilon)\rangle$ be an $A$-indiscernible sequence, and put $h_i=g_{i+\epsilon}^{-1}g_i$. 
Then $\langle(h_i,\bar a_i\bar a_{i+\epsilon}):i\in\Q\rangle$ is principal indiscernible over $A$; if moreover $\tp(g_i/A,\{g_j,\bar a_j:j<i\})$ is subgroup-generic for all $i$, then $\langle(h_i,\bar a_i\bar a_{i+\epsilon}):i\in\Q\rangle$ is principal subgroup-generic over $A$.\ep
It follows by compactness and indiscernibility that there are principal subgroup-generic sequences of any order-type.
\bpf Let $J\subset\Q\setminus\{i\}$ be a finite subset such that $C$ is definable over $X_J=A\cup\{g_j,g_{j+\epsilon},\bar a_j,\bar a_{j+\epsilon}:j\in J\}$, let $]m,M[$ be an open interval containing $\{i,i+\epsilon\}$ with $]m,M[\cap J=\emptyset$, and put $I=]m,M[\cap(\Q\cup(\Q+\epsilon))$. Let $H_0$ be an $X_J$-definable subgroup of finite index in $H$. 
Since $h_i=g_{i+\epsilon}^{-1}g_i\in C$, by indiscernibility of $\langle g_j:j\in I\rangle$ over $X$ we obtain $g_k^{-1}g_j\in C$ for all $j<k$ in $I$. By Ramsey's Theorem there is an infinite set of indices $I'\subseteq I$ such that all $g_j^{-1}g_k$ with $j<k$ in $I'$ are in the 
same coset $C_0$ modulo $H_0$. So for $j<k<\ell$ in $I'$ we obtain 
$$g_k^{-1}g_j=g_k^{-1}g_\ell g_\ell^{-1}g_j=(g_\ell^{-1}g_k)^{-1}g_\ell^{-1}g_j\in C_0^{-1}C_0=H_0.$$
By indiscernibility again $g_k^{-1}g_j\in H_0$ for all $j<k$ in $I$. In particular $h_i=g_{i+\epsilon}^{-1}g_i\in H_0$. As this is true for all finite $J\subset\Q\setminus\{i\}$ and all $X_J$-definable subgroups of $H$ of finite index, and since $A_i\subseteq\dcl(X_J:J\subset\Q\setminus\{i\}\mbox{ finite})$, we get $h_i\in H^0_{A_i}$.

Moreover, if $\tp(g_{i+\epsilon}/A,\{g_j,\bar a_j:j\le i\})$ is subgroup-generic, then $g_{i+\epsilon}$ is in no definable coset of a subgroup of infinite index in $G$ which has only finitely many images under automorphisms fixing $A\cup\{g_j,\bar a_j:j\le i\}$, and the same is true for $g_{i+\epsilon}^{-1}$, since right cosets of a subgroup $H$ of infinite index are left cosets of a conjugate of $H$, which still has infinite index. But then the same still is true of $g_{i+\epsilon}^{-1}g_i$, as we can just translate by $g_i^{-1}$ on the right. Thus  $\tp(h_i/A,\{g_j,\bar a_j:j\le i\})$ is subgroup-generic, and $\langle(h_i,\bar a_i\bar a_{i+\epsilon}):i\in\Q\rangle$ is subgroup-generic over $A$.\epf

\bt\label{trivial_by_finite}
Let $G$, $H$ and $K$ be abelian groups of finite burden definable in some $\omega$-categorical theory, and let $\lambda:G\times H\to K$ be  a definable bilinear quasi-form. Then $\lambda$ is virtually almost trivial. If $G$ and $H$ are connected, then $\lambda$ is trivial.\et
\bpf Let $\lambda:G\times H\to K$ be a counter-example. Define the {\em reduced burden} of $G$ and of $H$ with respect to $\lambda$ to be
$$\rbdn_\lambda(G)=\max_{\bar h\in H\text{ finite}}\bdn(G/\ann_G(\bar h))\quad\mbox{and}\quad\rbdn_\lambda(H)=\max_{\bar g\in G\text{ finite}}\bdn(H/\ann_H(\bar g)).$$
Then $\rbdn_\lambda(G)\le\bdn(G)$ and $\rbdn_\lambda(H)\le\bdn(H)$.\setcounter{claim}{0}

\begin{claim} If $A\le G$, $B\le H$ and $C\le D\le K$ are definable subgroups such that the map
$$\bar\lambda:A\times B\to D/C$$
(induced by composing the restriction of $\lambda$ to $(A\times B)\cap\lambda^{-1}[D]$ with the quotient map $D\to D/C$) is still a bilinear quasi-form, then $\rbdn_{\bar\lambda}(A)\le\rbdn_\lambda(G)$, $\rbdn_{\bar\lambda}(B)\le\rbdn_\lambda(H)$ and $\bdn(D/C)\le\bdn(H)$.\end{claim}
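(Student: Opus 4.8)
The plan is to establish the three inequalities separately: the two reduced-burden bounds are soft and reduce to monotonicity of burden, while $\bdn(D/C)\le\bdn(H)$ is where the finite-burden hypothesis is genuinely used.

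For $\rbdn_{\bar\lambda}(A)\le\rbdn_\lambda(G)$, fix a finite tuple $\bar b$ from $B$. Since $\lambda(a,b_i)=0$ forces $\bar\lambda(a,b_i)$ to be defined and equal to the zero of $D/C$, we get $\ann^{\bar\lambda}_A(\bar b)\supseteq A\cap\ann^\lambda_G(\bar b)$. Hence $A/\ann^{\bar\lambda}_A(\bar b)$ is a definable homomorphic image of $A/(A\cap\ann^\lambda_G(\bar b))$, which is itself definably isomorphic to a subgroup of $G/\ann^\lambda_G(\bar b)$. Now an inp-pattern in a definable subgroup is an inp-pattern in the ambient group, and an inp-pattern in a definable homomorphic image pulls back to one in the domain; so burden does not increase along either step, and $\bdn(A/\ann^{\bar\lambda}_A(\bar b))\le\bdn(G/\ann^\lambda_G(\bar b))\le\rbdn_\lambda(G)$. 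Taking the maximum over finite tuples $\bar b$ from $B\le H$ gives $\rbdn_{\bar\lambda}(A)\le\rbdn_\lambda(G)$; the inequality $\rbdn_{\bar\lambda}(B)\le\rbdn_\lambda(H)$ follows symmetrically.

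For $\bdn(D/C)\le\bdn(H)$ the goal is to realize $D/C$, up to commensurability, as a definable subquotient of $H$. By $\omega$-categoricity the increasing union over $n$ of the sets of sums of at most $n$ values of $\bar\lambda$ is eventually constant, so $\langle\im\bar\lambda\rangle$ is definable; I would take $D/C=\langle\im\bar\lambda\rangle$, which is what occurs in the applications (and without which the stated bound need not hold). First I would bound the number of summands needed: writing a generic element as $\sum_{i<m}\bar\lambda(a_i,c_i)$ with $m$ least makes $\sum_{i<m}\im\bar\lambda_{a_i}$ an irreducible sum of definable subgroups, whence $m$ is uniformly bounded by Lemma \ref{reducible}. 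The crux is then to pass from such a bounded sum to a \emph{single} image $\im\bar\lambda_a$: one must show that for a suitably generic $a\in A$ (a subgroup-generic element, which exists over any parameter set) the subgroup $\im\bar\lambda_a$ already has finite index in $D/C$. Granting this, the partial homomorphism $\bar\lambda_a$, viewed as a homomorphism $\dom\bar\lambda_a\to\im\bar\lambda_a$ with kernel $\ann^{\bar\lambda}_B(a)$, identifies $\im\bar\lambda_a$ with $\dom\bar\lambda_a/\ann^{\bar\lambda}_B(a)$, a definable subquotient of $B\le H$; hence $\bdn(D/C)=\bdn(\im\bar\lambda_a)\le\bdn(H)$, using monotonicity of burden under definable subquotients and its invariance under commensurability.

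The hard part is this last reduction to a single $a$. The $\im\bar\lambda_a$ form a uniformly definable family of subgroups of the abelian NTP$_2$ group $D/C$, and one must control this family --- for instance by applying Fact \ref{reducible-cap} to their intersections and then showing that a generic member is $\aeq$-maximal, so that a bounded sum of members of the family is commensurable with a single member. This comparability phenomenon for the images $\im\bar\lambda_a$ is the finite-burden analogue of the almost-inclusion comparability that drives the rest of the proof, and it is essentially the only point in the claim where finiteness of $\bdn(G)$ and $\bdn(H)$ is actually needed.
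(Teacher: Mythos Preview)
Your treatment of the two reduced-burden inequalities is correct and essentially the paper's argument; you are even a bit more careful in noting $\ann^{\bar\lambda}_A(\bar b)\supseteq A\cap\ann^\lambda_G(\bar b)$, which the paper's one-line chain of inequalities leaves implicit.

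For the third inequality you have been misled by a typo in the statement: it should read $\bdn(D/C)\le\bdn(K)$, not $\bdn(H)$. This is the quantity appearing in the minimality hypothesis $\rbdn_\lambda(G)+\rbdn_\lambda(H)+\bdn(K)$ set up immediately after the claim, and every later application of the induction (Claims~\ref{images}, \ref{L-images}, \ref{comparable}) compares the burden of the new target group to $\bdn(K)$. With $K$ in place of $H$ the inequality is trivial, since $D/C$ is a definable subquotient of $K$; accordingly the paper's proof dispatches it with the single phrase ``This is immediate for $\bdn(D/C)$''. Your elaborate programme---realising $D/C$ up to commensurability as a single image $\im\bar\lambda_a$ inside $H$---is aimed at a statement that is neither needed nor, as you yourself suspected, true without further hypotheses (take $\lambda$ identically zero and $D/C$ arbitrary). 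The ``hard part'' you isolate is thus an artefact of the misprint, not a genuine step in the argument.
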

\bpfc This is immediate for $\bdn(D/C)$; moreover
$$\begin{aligned}\rbdn_\lambda(G)&=\max_{\bar h\in H\text{ finite}}\bdn(G/\ann_G(\bar h))\ge\max_{\bar h\in B\text{ finite}}\bdn(G/\ann_G(\bar h))\\
&\ge\max_{\bar h\in B\text{ finite}}\bdn(A/\ann_A(\bar h))=\rbdn_{\bar\lambda}(A)\,,\end{aligned}$$
similarly $\rbdn_\lambda(H)\ge\rbdn_{\bar\lambda}(B)$.\epf

Since our structure has finite burden, we may assume that 
$$\rbdn_\lambda(G)+\rbdn_\lambda(H)+\bdn(K)$$
is minimal possible among all possible counter-examples. Adding finitely many parameters to the language, we may assume that everything is $\emptyset$-definable.

Let $\epsilon>0$ be infinitesimal, let $\langle (x_i,x'_i):i\in\Q\cup(\Q+\epsilon)\rangle $ 
be an $\emptyset$-indiscernible subgroup-generic sequence in $G\times H$,
and put $y_i=x_i-x_{i+\epsilon}$ and $y'_i=x'_i-x'_{i+\epsilon}$. 
Then $\langle (y_i,y'_i):i<\omega\rangle $ is a principal subgroup-generic sequence over $\emptyset$ by Proposition \ref{connected} (in the first coordinate for $G$, in the second one for $H$).

\begin{claim}\label{images}  $\im\lambda_{y_i}$ and $\im\lambda_{y_j}$ are $\ale$-comparable for all $i<j$.\end{claim}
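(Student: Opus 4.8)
The plan is to observe that $\langle\,\im\lambda_{y_i}:i\in\Q\,\rangle$ is a \emph{uniformly} definable, $\emptyset$-indiscernible sequence of subgroups of $K$: uniformly definable because $\lambda$ is a bilinear quasi-form, so $\im\lambda_{y_i}=\lambda_{y_i}[\dom\lambda_{y_i}]$ is defined by a single formula with parameter $y_i$ (with $\dom\lambda_{y_i}$ a uniformly definable finite-index subgroup of $H$), and indiscernible because $\langle y_i:i\in\Q\rangle$ is. For fixed $i\neq j$, the property ``$\im\lambda_{y_i}$ and $\im\lambda_{y_j}$ are $\ale$-incomparable'' is an infinite conjunction of first-order conditions on the pair $(y_i,y_j)$ (for each $n$, that the index of $\im\lambda_{y_i}\cap\im\lambda_{y_j}$ in each of the two is $\geq n$), hence invariant under indiscernibility. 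So either the conclusion of the Claim holds, or \emph{every} pair $\im\lambda_{y_i},\im\lambda_{y_j}$ with $i\neq j$ is incomparable, and it is this latter case that I want to rule out.

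So suppose all the images are pairwise incomparable. Using the finite burden of $K$ I would first show that finite sums and intersections of the $\im\lambda_{y_i}$ stabilise up to commensurability: by Lemma \ref{reducible} a sum of more than $\bdn(K)$ of them is reducible, so — deleting redundant summands and exploiting that all increasing tuples in $\Q$ of a fixed length are conjugate — the commensurability class of $\sum_{i\in F}\im\lambda_{y_i}$ depends only on $|F|$ and is eventually constant in $|F|$; dually, by Fact \ref{reducible-cap}, the class of $\bigcap_{i\in F}\im\lambda_{y_i}$ is eventually constant. To turn this into a contradiction I would feed an incomparable pair of images into the minimality of $\rbdn_\lambda(G)+\rbdn_\lambda(H)+\bdn(K)$: after replacing $\lambda$ by its composition with a suitable quotient map of $K$ (for instance $K\to K/(\im\lambda_{y_i}\cap\im\lambda_{y_j})$), or by restricting its domain and range to appropriate almost-annihilators, and invoking the preceding Claim to see that reduced burdens do not increase, one should obtain a bilinear quasi-form with strictly smaller invariant that is still a counter-example — impossible. (An alternative route is to build directly an inp-pattern in $K$, of depth exceeding $\bdn(K)$, out of cosets of the pairwise-incomparable images, using subgroup-genericity of $\langle y_i\rangle$ to guarantee that the relevant systems of cosets have nonempty intersection.)

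The main obstacle is exactly this last step. Plain indiscernibility together with finite burden is \emph{not} enough — indiscernible, pairwise-incomparable, uniformly definable families of subgroups do exist in finite-burden abelian groups — so the argument must genuinely use $\omega$-categoricity (which forces the ring of definable quasi-endomorphisms to remain small) together with subgroup-genericity of $\langle y_i\rangle$, equivalently the minimality of the counter-example. Pinning down the correct subgroups to quotient or restrict by, and verifying that the resulting quasi-form is still a counter-example while its invariant has strictly decreased, is where the real work lies.
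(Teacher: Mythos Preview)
Your outline has the right overall shape --- use the minimality of the counter-example to pass to an induced quasi-form with smaller invariant --- but the concrete steps you propose do not go through, and the crucial mechanism is missing. First, quotienting by $\im\lambda_{y_i}\cap\im\lambda_{y_j}$ does not in general drop $\bdn(K)$: the intersection may well be finite (or even trivial) while $K$ stays large. Second, and more importantly, your logical structure is inverted. You want to manufacture a \emph{smaller counter-example}; the paper instead manufactures a smaller quasi-form, concludes by minimality that it \emph{is} virtually almost trivial, and then extracts comparability from that positive information. Concretely: take the least $\ell$ with $\sum_{i\le\ell}\im\lambda_{y_i}$ reducible (this exists by Lemma~\ref{reducible}), say $\im\lambda_{y_{i_0}}\ale C:=\sum_{i\in I}\im\lambda_{y_i}$ for $I=\{0,\dots,\ell\}\setminus\{i_0\}$. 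Set $A=\aann_G^{\bar\lambda}(H)$ and $B=\aann_H^{\bar\lambda}(A)$ for $\bar\lambda:G\times H\to K/C$; then $B$ has finite index in $H$ by Proposition~\ref{symmetry}, and for each $i\in I$ the induced form $\lambda_i:A\times B\to C/\im\lambda_{y_i}$ has $\bdn(C/\im\lambda_{y_i})<\bdn(K)$ because the sum over $I$ is \emph{irreducible} (Remark~\ref{bdn_product}). By minimality $\lambda_i$ is virtually almost trivial, so $\aann_A^{\lambda_i}(B)$ has finite index in $A$.

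The step you are missing entirely is how to place $y_{i_0}$ inside this finite-index subgroup: this is exactly what \emph{principal} indiscernibility is for. Since $A$ is $\{y_j:j\in I\}$-definable and $y_{i_0}\in A$, principal indiscernibility gives $y_{i_0}\in A^0_{\{y_j:j\in I\}}\le\aann_A^{\lambda_i}(B)$, whence $\im\lambda_{y_{i_0}}\ale\im\lambda_{y_i}$ and indiscernibility finishes. Subgroup-genericity alone (which you invoke) does not yield this: it only prevents $y_{i_0}$ from lying in a proper coset of an $\acl$-definable infinite-index subgroup, whereas here one needs $y_{i_0}$ to land in every finite-index subgroup over the relevant parameters. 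Your ``alternative route'' of building an inp-pattern directly also stalls: pairwise $\ale$-incomparability of the $\im\lambda_{y_i}$ does \emph{not} imply that arbitrarily long sums are irreducible (three generic lines in a plane are pairwise incomparable but any two span), so Lemma~\ref{reducible} does not give you a pattern of unbounded depth.
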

\bpfc Suppose not. By Lemma \ref{reducible} there is a minimal $2\le\ell\le\bdn(K)$ 
such that the sum $\sum_{i=0}^\ell\im\lambda_{y_i}$ is reducible (in the sense of Lemma \ref{reducible}). So there is $i_0\le\ell$ such that with $I=\{0,1,\ldots,\ell\}\setminus\{i_0\}$ we have
$$\im\lambda_{y_{i_0}}\ale\sum_{i\in I}\im\lambda_{y_i}=:C.$$
Consider the $\{y_j:j\in I\}$-definable subgroups
$$A=\{g\in G:\im\lambda_g\ale C\}\qquad\mbox{and}\qquad B=\{h\in H:\lambda'_h[A]\ale C\},$$
and note that $A=\aann_G^{\bar\lambda}(H)$ and $B=\aann_H^{\bar\lambda}(A)$, where $\bar\lambda:G\times H\to K/C$ is the induced bilinear quasi-form obtained by composing $\lambda$ with the quotient map. Then $H\ale B$ by Lemma \ref{symmetry}, so $B$ has finite index in $H$.

For every $i\in I$ there is a $\{y_j:j\in I\}$-definable induced bilinear quasi-form
$$\lambda_i:A\times B\to C/\im\lambda_{y_i}.$$
Note that for $a\in A$ the domain 
$\dom(\lambda_i)_a=\lambda_a^{-1}[C]\cap B$
has finite index in $B$ by Lemma \ref{add_basics}(3), and likewise for $(\lambda_i)_b'$ with $b\in B$,
so $\lambda_i$ is indeed a bilinear quasi-form.

By irreducibility of the sum $\sum_{j\in I}\im\lambda_{y_j}$, the quotient 
$\im\lambda_{y_i}/(\im\lambda_{y_i}\cap\sum_{j\in I,j\not=i}\im\lambda_{y_j})$ is infinite. Hence, as
$$(\im\lambda_{y_i}/(\im\lambda_{y_i}\cap\sum_{j\in I,j\not=i}\im\lambda_{y_j}))\times (\sum_{j\in I,j\not=i}\im\lambda_{y_j}/(\im\lambda_{y_i}\cap\sum_{j\in I,j\not=i}\im\lambda_{y_j}))$$
embeds definably into $C/(\im\lambda_{y_i}\cap\sum_{j\in I,j\not=i}\im\lambda_{y_j})$, Remark \ref{bdn_product} implies the 
following strict inequality:
$$\begin{aligned}\bdn(C/\im\lambda_i)&=\bdn(\sum_{j\in I,j\not=i}\im\lambda_{y_j}/(\im\lambda_{y_i}\cap\sum_{j\in I,j\not=i}\im\lambda_{y_j}))\\
&<\bdn(C/(\im\lambda_{y_i}\cap\sum_{j\in I,j\not=i}\im\lambda_{y_j}))\le\bdn(K).\end{aligned}$$
By induction, the bilinear quasi-form $\lambda_i$ is virtually almost trivial. Hence
the almost annihilator $\aann_A^{\lambda_i}(B)$ of $B$ with respect to the 
quasi-form $\lambda_i$ is an $\{y_j:j\in I\}$-definable subgroup of $A$ of finite index. 
Since $y_{i_0}\in A^0_{\{y_j:j\in I\}}$ by principal indiscernibility,
we obtain that $y_{i_0}\in \aann_A^{\lambda_i}(B)$. Thus
$\lambda_{y_{i_0}}[B]\ale\im\lambda_{y_i}$; as $B$ has finite index in $H$ we also have $\im\lambda_{y_{i_0}}\ale\im\lambda_{y_i}$.
The claim now follows from indiscernibility.\epf
\begin{claim}\label{kernels} $\ann_H(y_i)$ and  $\ann_H(y_j)$ are $\ale$-comparable for all $i<j$.\end{claim}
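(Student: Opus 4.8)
The plan is to mirror the proof of Claim \ref{images}, replacing sums of images in $K$ by intersections of kernels in $H$. Since a theory of finite burden is NTP$_2$, Fact \ref{reducible-cap} applies to the uniformly definable family $\{\ann_H(g):g\in G\}$ and yields a finite bound $n$, depending only on the formula defining $\lambda$, on the size of any $\ale$-irreducible intersection among the subgroups $\ann_H(y_i)$. If these are all pairwise $\ale$-comparable there is nothing to prove; otherwise, combining indiscernibility of $\langle(y_i,y'_i):i<\omega\rangle$ with Fact \ref{reducible-cap}, I would choose the least $\ell$ for which some intersection of $\ell+1$ of the groups $\ann_H(y_j)$ is $\ale$-reducible (such an $\ell$ exists and is at most $n$; and $\ell\ge 2$, since $\ell\le 1$ would exhibit an $\ale$-comparable pair). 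After permuting finitely many indices, fix $i_0\le\ell$ witnessing reducibility, set $I=\{0,\dots,\ell\}\setminus\{i_0\}$ and $D=\bigcap_{i\in I}\ann_H(y_i)$, so that $D\ale\ann_H(y_{i_0})$; by minimality of $\ell$ together with indiscernibility, the intersection of any $\ell$ of the $\ann_H(y_j)$ is $\ale$-irreducible, so in particular $[D_i:D]$ is infinite for each $i\in I$, where $D_i=\bigcap_{j\in I\setminus\{i\}}\ann_H(y_j)$, and $D_i\cap D_j\aeq D$ for distinct $i,j\in I$.

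Next, for each $i\in I$ I would construct an induced bilinear quasi-form $\lambda_i$, definable over $\{y_j:j\in I\}$, whose first argument is restricted to the $\{y_j:j\in I\}$-definable subgroup $A:=\aann_G(D)$ — note that $A$ contains $y_{i_0}$ (as $D\ale\ann_H(y_{i_0})$ forces $\lambda_{y_{i_0}}[D]$ finite) and every $y_j$ with $j\in I$ (as $\lambda_{y_j}[D]=0$) — and which is arranged so that the sum of its three invariants $\rbdn_{\lambda_i}(A)$, $\rbdn_{\lambda_i}$ of its second argument, and $\bdn$ of its target, is strictly smaller than $\rbdn_\lambda(G)+\rbdn_\lambda(H)+\bdn(K)$; as in Claim \ref{images}, the strict drop should be harvested from the infinitude of $[D_i:D]$ via Remark \ref{bdn_product}. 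By the minimality of the counterexample, each $\lambda_i$ is then virtually almost trivial.

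Finally I would conclude as in Claim \ref{images}: by Corollary \ref{aann} the almost-annihilator of the second argument of $\lambda_i$ is a $\{y_j:j\in I\}$-definable finite-index subgroup of $A$, hence contains $A^0_{\{y_j:j\in I\}}$; and $y_{i_0}\in A^0_{\{y_j:j\in I\}}$ by principal indiscernibility, since $A$ is definable over $\{y_j:j\in I\}$ — which does not mention $y_{i_0}$ — so $y_{i_0}$ lies in the connected component of $A$ over $\{y_j:j\in I\}$. Unwinding the resulting almost-triviality for $\lambda_{y_{i_0}}$ should give that $\ann_H(y_{i_0})$ almost contains $D_i$ for every $i\in I$, which together with $D=D_i\cap\ann_H(y_i)$ and the $\ale$-irreducibility of $\{\ann_H(y_j):j\in I\}$ contradicts the minimality of $\ell$ unless $\ann_H(y_{i_0})$ is $\ale$-comparable with each $\ann_H(y_i)$; the general statement then follows by indiscernibility.

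The main obstacle is the construction of $\lambda_i$ and the accompanying burden bookkeeping. In Claim \ref{images} one simply composes $\lambda$ with the honest quotient map $K\to K/\im\lambda_{y_i}$, which stays a single-valued partial function; the naive dual — passing to $H/D$, or to $\ann_H(y_i)/D$ — fails, because for $g\notin\aann_G(D)$ the restriction of $\lambda_g$ to $D$ has finite but not uniformly bounded cokernel, so $\lambda$ does not descend. One is therefore forced to keep the second argument inside $H$ (working with the subgroups $D_i$ and with the first argument restricted to $A$), and to argue the strict decrease of the reduced burden on the $H$-side directly, controlling how $D_i$ interacts with an arbitrary $\ann_H(\bar g)$ for $\bar g$ a tuple from $A$. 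Carrying this out is the finite-burden analogue of the single fact — pairwise $\ale$-comparability of all definable subgroups — that made the burden-one case of \cite{DW} routine, and is where most of the effort goes.
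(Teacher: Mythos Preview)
Your approach is correct and matches the paper's: set $B=\bigcap_{i\in I}\ann_H(y_i)$, $A=\aann_G(B)=\{g\in G:B\ale\ann_H(g)\}$, run the inductive hypothesis on restricted forms $\lambda_i$, and conclude via principal indiscernibility with $y_{i_0}\in A^0_{\{y_j:j\in I\}}$.

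The one place you are vaguer than necessary is the construction of $\lambda_i$ and the burden drop, which in the paper is much simpler than you fear. The paper takes the second argument to be $\ann_H(y_i)$ (not your $D_i$): set $\lambda_i:A\times\ann_H(y_i)\to K$ to be the plain restriction of $\lambda$. Because every $g\in A$ satisfies $B\ale\ann_H(g)$, the reduced burden on the $H$-side is computed exactly:
\[
\rbdn_{\lambda_i}(\ann_H(y_i))=\bdn(\ann_H(y_i)/B)<\bdn\bigl((\ann_H(y_i)+D_i)/B\bigr)\le\bdn(H/B)\le\rbdn_\lambda(H),
\]
the strict inequality coming from $|D_i:B|=\infty$ and Remark~\ref{bdn_product} (since $\ann_H(y_i)\cap D_i=B$). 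With this choice the unwinding is also cleaner than yours: $y_{i_0}\in\aann_A^{\lambda_i}(\ann_H(y_i))$ gives directly $\ann_H(y_i)\ale\ann_H(y_{i_0})$, and indiscernibility finishes. Your variant with second argument $D_i$ would also work (the drop then uses $|\ann_H(y_i):B|=\infty$, and your contradiction with the minimality of $\ell$ is valid), but note your write-up mixes the two choices: you harvest the drop from $[D_i:D]$ --- which fits the paper's $\ann_H(y_i)$-version --- yet unwind to $D_i\ale\ann_H(y_{i_0})$ --- which fits the $D_i$-version.
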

\bpfc Suppose not. By Lemma \ref{reducible-cap} there is a minimal $2\le\ell\le\bdn(H)$ such that the intersection 
$\bigcap_{i=0}^\ell\ann_H(y_i)$ is reducible. So there is $i_0\le\ell$ such that  $$B:= \bigcap_{i\in I}\ann_H(y_i)\ale\ann_H(y_{i_0}),$$ where
$I=\{0,1,\ldots,\ell\}\setminus\{i_0\}$.
Consider the $\{y_j:j\in I\}$-definable subgroup
$$A=\{g\in G:B\ale\ann_H(g)\}.$$
For every $i\in I$ consider the restricted bilinear quasi-form
$\lambda_i:A\times\ann_H(y_i)\to K$.
As $(\bigcap_{j\in I,j\not=i}\ann_H(y_j))/B$ is infinite by minimality of $\ell$,
we get by Remark \ref{bdn_product} and the definitions of $A$ and $B$ that
$$\begin{aligned}\rbdn_{\lambda_i}(\ann_H(y_i))&=\bdn(\ann_H(y_i)/B)<\bdn((\ann_H(y_i)+\bigcap_{j\in I,j\not=i}\ann_H(y_j))/B)\\
&\le\bdn(H/B)\le\rbdn_\lambda(H).\end{aligned}$$
By induction, the bilinear quasi-form $\lambda_i$ is virtually almost trivial. Hence 
$\aann_A^{\lambda_i}(\ann_H(y_i))$ is a subgroup of $A$ of finite index definable over $\{y_j:j\in I\}$.
Since $y_{i_0}\in A^0_{\{y_j:j\in I\}}$ by principal indiscernibility, we get that
$\ann_H(y_i)\ale\ann_H(y_{i_0})$.
The claim now follows from indiscernibility.\epf
Note that Claims \ref{images}.\ and \ref{kernels}.\ do not use subgroup-genericity of the sequence, only principal indiscernibility.
We will use this observation to apply (the proofs of) these claims below to certain forms induced by $\lambda$.

\begin{claim}\label{monotone} For $i<j$ we have $\ann_H(y_j)\ale\ann_H(y_i)$, and if $B\le H$ is definable over $\{y_k,y'_k:k\notin[i,j]\}$, then $\lambda_{y_i}[B]\ale\lambda_{y_j}[B]$. In particular $\im\lambda_{y_i}\ale\im\lambda_{y_j}$.\end{claim}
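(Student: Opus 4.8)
The plan is to derive all three assertions from comparability results already in hand --- Claims \ref{images} and \ref{kernels}, together with their analogues for suitably restricted forms --- combined with Lemma \ref{kerim} and, for the one genuinely new ingredient, subgroup-genericity of the sequence (which, unlike Claims \ref{images} and \ref{kernels}, has not yet been used).

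\emph{The kernels.} By Claim \ref{kernels} the subgroups $\ann_H(y_i)$ are pairwise $\ale$-comparable, and by indiscernibility of $\langle(y_i,y_i'):i\in\Q\rangle$ the comparison of $\ann_H(y_i)$ with $\ann_H(y_j)$ for $i<j$ is the same for every such pair; so the chain is either commensurable throughout --- in which case the assertion is vacuous --- or strictly $\ale$-monotone, and it suffices to exclude that it be strictly increasing. So assume $\ann_H(y_0)\ale\ann_H(y_1)\ale\cdots$ with strict inclusions, and consider $\aann_G(\ann_H(y_1))$, a subgroup of $G$ definable over $y_1$. If it has infinite index in $G$, then since $y_2$ is subgroup-generic over $\{y_1\}$ we get $y_2\notin\aann_G(\ann_H(y_1))$, i.e.\ $\ann_H(y_1)\not\ale\ann_H(y_2)$, contradicting strict increase. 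If it has finite index in $G$, then $y_0\notin\aann_G(\ann_H(y_1))$ (as $\ann_H(y_1)\not\ale\ann_H(y_0)$), and I would contradict this by principal indiscernibility: $\aann_G(\ann_H(y_1))$ is a finite-index subgroup definable over $A_0=\{y_k,y_k':k\ne0\}$, and principal indiscernibility forces $y_0$ into its connected component over $A_0$, hence into the subgroup. Either way we contradict the assumption, so the chain is non-increasing, i.e.\ $\ann_H(y_j)\ale\ann_H(y_i)$ for $i<j$. Intersecting with any definable $B\le H$ gives at once $\ann_B(y_j)=B\cap\ann_H(y_j)\ale B\cap\ann_H(y_i)=\ann_B(y_i)$.

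\emph{The images.} Fix $i<j$ and $B\le H$ definable over $\{y_k,y_k':k\notin[i,j]\}$. Since these parameters lie outside the interval $[i,j]$ and the sequence stays principal indiscernible over them, the proof of Claim \ref{images} applies to the restricted bilinear quasi-form $\lambda|_{G\times B}\colon G\times B\to K$ (the reduced-burden bookkeeping is not worsened, since passing from $H$ to $B$ does not increase $\rbdn_\lambda(G)$ or $\rbdn_\lambda(H)$); hence $\lambda_{y_i}[B]$ and $\lambda_{y_j}[B]$ are $\ale$-comparable. Now $\lambda_{y_i}|_B$ and $\lambda_{y_j}|_B$ are definable quasi-homomorphisms $B\to K$ with kernels $\ann_B(y_j)\ale\ann_B(y_i)$ (by the previous paragraph) and images $\lambda_{y_i}[B]$, $\lambda_{y_j}[B]$; were $\lambda_{y_j}[B]\ale\lambda_{y_i}[B]$, then Lemma \ref{kerim} (with $f=\lambda_{y_j}|_B$, $g=\lambda_{y_i}|_B$) would force $\lambda_{y_i}[B]\aeq\lambda_{y_j}[B]$, while if $\lambda_{y_j}[B]\not\ale\lambda_{y_i}[B]$ then comparability gives $\lambda_{y_i}[B]\ale\lambda_{y_j}[B]$ directly. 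In either case $\lambda_{y_i}[B]\ale\lambda_{y_j}[B]$, and taking $B=H$ yields $\im\lambda_{y_i}\ale\im\lambda_{y_j}$.

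I expect the crux to be the direction argument for the kernels, and within it the finite-index case of $\aann_G(\ann_H(y_1))$: subgroup-genericity alone does not reach it, and one must invoke the full strength of principal indiscernibility on an earlier term of the sequence, with some care about which cosets are definable over $A_0$. Granting the kernel direction, the rest is bookkeeping: the comparability inputs are available, the passage to restricted forms is exactly the device announced after Claim \ref{kernels}, and Lemma \ref{kerim} turns "kernels decrease" into "images increase" for free.
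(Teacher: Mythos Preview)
Your argument is correct. The image part matches the paper's: restrict $\lambda$ to $G\times B$, invoke (the proof of) Claim~\ref{images} for the subsequence lying between the parameters of $B$ to obtain comparability of $\lambda_{y_i}[B]$ and $\lambda_{y_j}[B]$, then use the kernel direction together with Lemma~\ref{kerim} to force $\lambda_{y_i}[B]\ale\lambda_{y_j}[B]$.

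For the kernel direction, however, you take a genuinely different route. The paper first proves an auxiliary fact, $\aann_H(G)\ale\ann_H(y_0)$, by picking a subgroup-generic $h\in\aann_H(G)$ over $x_0,\ldots,x_n$ (the original sequence before differencing), using pigeonhole on the bounded index $|G:\ann_G(h)|$ to find $x_i-x_j\in\ann_G(h)$, and invoking genericity of $h$. It then argues: if $\ann_H(y_0)\ale\ann_H(y_1)$ then $y_1\in\aann_G(\ann_H(y_0))$, subgroup-genericity of $y_1$ forces this group to have finite index, and Proposition~\ref{symmetry} yields $\ann_H(y_0)\ale\aann_H(G)$; combining with the auxiliary fact, all $\ann_H(y_i)$ are commensurable. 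Your argument bypasses both the auxiliary fact and Proposition~\ref{symmetry} by a direct case-split on the index of $\aann_G(\ann_H(y_1))$: subgroup-genericity of $y_2$ handles the infinite-index case, and principal indiscernibility of $y_0$ handles the finite-index case. The trade-off is that you invoke principal indiscernibility here (which the paper does not), while the paper's version isolates the statement $\aann_H(G)\ale\ann_H(y_0)$, which has some independent interest. Your route is shorter and stays entirely within the $y$-sequence.
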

\bpfc By $\omega$-categoricity there is a bound $n$ on the index of $\ann_G(h)$ in $G$ for $h\in\aann_H(G)$. 
Choose $h\in\aann_H(G)$ subgroup-generic over $x_0,\ldots,x_n$. Then $x_i-x_j\in\ann_G(h)$ for some $0\le i<j\le n$, whence $h\in\ann_H(x_i-x_j)$. 
By subgroup-genericity of $h$ over $x_0,\ldots,x_n$, the group $\ann_H(x_i-x_j)\cap\aann_H(G)$ must have a finite index in $\aann_H(G)$. Thus $\aann_H(G)\ale\ann_H(x_i-x_j)$; by indiscernibility $\aann_H(G)\ale\ann_H(x_0-x_\epsilon)=\ann_H(y_0)$.

Suppose $\ann_H(y_0)\ale\ann_H(y_1)$. Then $y_1\in\aann_G(\ann_H(y_0))$; as $y_1$ is subgroup-generic over $y_0$ we have $G\ale\aann_G(\ann_H(y_0))$. By Proposition \ref{symmetry}, $\ann_H(y_0)\ale\aann_H(G)$. It follows that $\ann_H(y_i)\aeq\aann_H(G)$ for all $i\in\omega$, and $\ann_H(y_1)\aeq\ann_H(y_0)$. 

The first assertion now follows from Claim \ref{kernels}.

For the second assertion, let $\{y_k,y'_k:k\in I\}$ be the finitely many parameters needed to define $B$.
Put $m=\max I\cap(-\infty,i)$ and $M=\min I\cap(j,\infty)$. We can apply Claim \ref{images} to the restriction
of $\lambda$ to $G\times B$ and the sequence $\langle (y_k,y'_k):m<k<M\rangle$. 
Hence $\lambda_{y_i}[B]\ale\lambda_{y_j}[B]$ or $\lambda_{y_j}[B]\ale\lambda_{y_i}[B]$.
But $\ann_B(y_j)\ale\ann_B(y_i)$ by the first part, so if the second option holds,
then Lemma \ref{kerim} yields $\lambda_{y_0}[B]\aeq\lambda_{y_1}[B]$.\epf

\begin{claim}\label{preimages} If $i<j$ and $C\le K$ is definable over $\{y_k,y'_k:k\notin[i,j]\}$, then $\lambda^{-1}_{y_j}[C]\ale\lambda^{-1}_{y_i}[C]$.\end{claim}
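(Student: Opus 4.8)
The plan is first to show that $\lambda^{-1}_{y_i}[C]$ and $\lambda^{-1}_{y_j}[C]$ are $\ale$-comparable, and then to fix the direction using Lemma~\ref{kerim}. Let $\bar\lambda:G\times H\to K/C$ be the bilinear quasi-form obtained by composing $\lambda$ with the quotient map $K\to K/C$; it is definable over the finitely many of the parameters $\{y_k,y'_k:k\notin[i,j]\}$ needed to define $C$, for $g\in G$ the map $\bar\lambda_g:H\to K/C$, $h\mapsto\bar\lambda(g,h)$, is a definable quasi-homomorphism with trivial cokernel, $\ker\bar\lambda_g=\lambda^{-1}_g[C]=\ann^{\bar\lambda}_H(g)$, and $\im\bar\lambda_g=(\im\lambda_g+C)/C$. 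Since $[i,j]$ is a convex subset of $\Q$ containing none of the indices of those parameters, the subsequence $\langle(y_k,y'_k):k\in[i,j]\rangle$ is still principal indiscernible over $\{y_k,y'_k:k\notin[i,j]\}$: the connected-component condition defining principal indiscernibility, for the term indexed by some $k_0\in[i,j]$, is taken over a subset of $\{y_k,y'_k:k\ne k_0\}$ and so is inherited from principal indiscernibility of the whole sequence over $\emptyset$.

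I would then run the proof of Claim~\ref{kernels} for $\bar\lambda$ and this subsequence. As observed right after Claim~\ref{kernels}, that argument uses nothing about the sequence beyond principal indiscernibility; besides this it only uses Fact~\ref{reducible-cap} and the minimality of the counter-example $\lambda$, so the only new thing to check is that the inductive (minimality) hypothesis is still available for the auxiliary forms produced inside that proof. These forms are of the shape $\mu:A\times\ann^{\bar\lambda}_H(y_k)\to K/C$ with $A\le G$, and they are exactly a restriction of $\lambda$ to $A\times\ann^{\bar\lambda}_H(y_k)$ followed by the quotient $K\to K/C$; hence by the first Claim of the present proof $\rbdn_\mu(A)\le\rbdn_\lambda(G)$ and $\rbdn_\mu(\ann^{\bar\lambda}_H(y_k))\le\rbdn_\lambda(H)$, while $\bdn(K/C)\le\bdn(K)$ since $K/C$ is a quotient of $K$; and, exactly as in Claim~\ref{kernels}, minimality of the size of the reducible intersection makes the middle term strictly smaller, $\rbdn_\mu(\ann^{\bar\lambda}_H(y_k))=\bdn(\ann^{\bar\lambda}_H(y_k)/B)<\bdn(H/B)\le\rbdn_\lambda(H)$. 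So the triple attached to $\mu$ is strictly below the minimal one and $\mu$ is virtually almost trivial by minimality. With this, the proof of Claim~\ref{kernels} goes through verbatim and yields that $\lambda^{-1}_{y_i}[C]=\ann^{\bar\lambda}_H(y_i)$ and $\lambda^{-1}_{y_j}[C]=\ann^{\bar\lambda}_H(y_j)$ are $\ale$-comparable. I expect this transfer of Claim~\ref{kernels} to $\bar\lambda$ to be the delicate part.

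It remains to exclude that $\lambda^{-1}_{y_i}[C]\ale\lambda^{-1}_{y_j}[C]$ while $\lambda^{-1}_{y_j}[C]\not\ale\lambda^{-1}_{y_i}[C]$. So assume $\lambda^{-1}_{y_i}[C]\ale\lambda^{-1}_{y_j}[C]$, i.e.\ $\ker\bar\lambda_{y_i}\ale\ker\bar\lambda_{y_j}$. By Claim~\ref{monotone} we have $\im\lambda_{y_i}\ale\im\lambda_{y_j}$, so $\im\lambda_{y_i}\cap\im\lambda_{y_j}$ has finite index in $\im\lambda_{y_i}$; its image $(\im\lambda_{y_i}\cap\im\lambda_{y_j}+C)/C$ under the surjection $\im\lambda_{y_i}\to(\im\lambda_{y_i}+C)/C$ is then a finite-index subgroup of $\im\bar\lambda_{y_i}$ contained in $\im\bar\lambda_{y_j}$, whence $\im\bar\lambda_{y_i}\ale\im\bar\lambda_{y_j}$. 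Now Lemma~\ref{kerim}, applied to the definable quasi-homomorphisms $\bar\lambda_{y_i},\bar\lambda_{y_j}\le H\times(K/C)$, gives $\ker\bar\lambda_{y_j}\aeq\ker\bar\lambda_{y_i}$, that is $\lambda^{-1}_{y_j}[C]\aeq\lambda^{-1}_{y_i}[C]$, so in particular $\lambda^{-1}_{y_j}[C]\ale\lambda^{-1}_{y_i}[C]$. Combined with the comparability established above, this proves the claim in all cases.
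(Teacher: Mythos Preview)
Your proof is correct and follows essentially the same route as the paper: reduce to $\bar\lambda:G\times H\to K/C$, obtain $\ale$-comparability of the preimages by rerunning the proof of Claim~\ref{kernels} for $\bar\lambda$ (using only principal indiscernibility and the minimality hypothesis, exactly as the paper remarks), and then fix the direction via Lemma~\ref{kerim} together with Claim~\ref{monotone}. The only cosmetic difference is in the direction step: you apply Lemma~\ref{kerim} directly to $\bar\lambda_{y_i},\bar\lambda_{y_j}:H\to K/C$, while the paper restricts $\lambda_{y_i},\lambda_{y_j}$ to $B=\lambda^{-1}_{y_i}[C]\to C$ before applying it; your version is arguably cleaner and avoids the slightly loose identification $\lambda_{y_i}[B]=C$ in the paper.
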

\bpfc Consider the induced bilinear quasi-form $\bar\lambda:G\times H\to K/C$, and note that $\lambda^{-1}_y[C]=\ker\bar\lambda_y$. Then $\lambda^{-1}_{y_i}[C]$ and $\lambda^{-1}_{y_j}[C]$ are $\ale$-comparable by Claim \ref{kernels}; suppose $\lambda^{-1}_{y_i}[C]\ale\lambda^{-1}_{y_j}[C]$. Then $\lambda_{y_i}$ and $\lambda_{y_j}$ induce quasi-homomorphisms from $B:=\lambda^{-1}_{y_i}[C]$ to $C$. As $\ann_B(y_j)\ale\ann_B(y_i)$ by Claim \ref{monotone} and $\lambda_{y_j}[B]\ale\lambda_{y_i}[B]=C$, Proposition \ref{kerim} implies $\lambda_{y_j}[B]\aeq C$.
Thus 
$$\lambda^{-1}_{y_j}[C]\aeq B+\ann_H(y_j)\ale B+\ann_H(y_i)=B=\lambda^{-1}_{y_i}[C].\qedhere$$\epf

We shall now study $\lambda_{y_i,y_j}$ for $i<j$. By Claim~\ref{monotone} and Remark \ref{quasi-end} it induces a quasi-endomorphism $\bar\lambda_{y_i,y_j}$ of $H/\ann_H(y_j)$. 
By Corollary \ref{invertible}, any definable quasi-endomorphism of $H/\ann_H(y_j)$ with finite kernel must be almost surjective, and any definable almost surjective quasi-endomorphism must have finite kernel; these are precisely the invertible quasi-endomorphisms.
\begin{claim}\label{nilpotent} If $f$ is a definable quasi-endomorphism of $H/\ann_H(y_j)$, then $f$ is invertible or nilpotent.\end{claim}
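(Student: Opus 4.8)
The plan is to combine the Fitting-type decomposition of Lemma~\ref{sum} with minimality of the counter-example to Theorem~\ref{trivial_by_finite}. Write $H'=H/\ann_H(y_j)$. By the discussion preceding the claim, a definable quasi-endomorphism of $H'$ is invertible exactly when its kernel is finite, so I may assume $\ker f$ is infinite and must show that $f$ is nilpotent. Applying Lemma~\ref{sum} to $f$ on $H'$ gives $n<\omega$ with $H'\aeq\im f^{\circ n}+\ker f^{\circ n}$, the sum being almost direct (finite intersection). Put $I=\im f^{\circ n}$. Since $\ker f\le\ker f^{\circ n}$, the kernel component $\ker f^{\circ n}$ is infinite; and since the $\im f^{\circ k}$ form a descending chain of subgroups definable over a fixed finite parameter set, they stabilise, so $f$ is nilpotent if and only if $I$ is finite. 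Thus it suffices to rule out the case that $I$ and $\ker f^{\circ n}$ are both infinite.

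So suppose both are infinite. Pulling $I$ and $\ker f^{\circ n}$ back along $H\to H'$ gives definable subgroups $\hat I,\hat K\le H$ containing $\ann_H(y_j)$, with $\hat I/\ann_H(y_j)=I$ and $\hat K/\ann_H(y_j)=\ker f^{\circ n}$; hence $H\aeq\hat I+\hat K$, $\hat I\cap\hat K\aeq\ann_H(y_j)$, and both $\hat I$, $\hat K$ have infinite index in $H$. I then restrict $\lambda$ to the definable bilinear quasi-forms $\lambda|_{G\times\hat I}\colon G\times\hat I\to K$ and $\lambda|_{G\times\hat K}\colon G\times\hat K\to K$. The claim is that each of these has strictly smaller complexity $\rbdn_\lambda(G)+\rbdn_\lambda(H)+\bdn(K)$ (with $H$ replaced by $\hat I$, resp.\ $\hat K$) than $\lambda$ itself, hence is virtually almost trivial by minimality: the reduced burden of the $G$-side and the burden of the codomain cannot grow under restriction, while the reduced burden of the image group strictly drops. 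For the latter one uses that for every finite $\bar g\subseteq G$ the subgroup $\ann_H(y_j\bar g)=\ann_H(y_j)\cap\ann_H(\bar g)$ lies in both $\hat I$ and $\hat K$, so $\hat I/(\hat I\cap\ann_H(\bar g))$ is a quotient of $\hat I/\ann_H(y_j\bar g)$, together with Remark~\ref{bdn_product} applied to the addition map $\hat I\times\hat K\to H$ (finite fibres, and $\hat K$, resp.\ $\hat I$, infinite).

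To finish I combine the two conclusions via Corollary~\ref{aann}: virtual almost triviality of $\lambda|_{G\times\hat I}$ gives $G\ale\aann_G(\hat I)=\{g\in G:\hat I\ale\ann_H(g)\}$, and likewise for $\hat K$; intersecting, for almost every $g\in G$ we have $\hat I\ale\ann_H(g)$ and $\hat K\ale\ann_H(g)$, hence $\hat I+\hat K\ale\ann_H(g)$, and since $\hat I+\hat K\aeq H$ this forces $g\in\aann_G(H)$. So $G\ale\aann_G(H)$, whence $\lambda$ is virtually almost trivial by Corollary~\ref{aann}, contradicting that $\lambda$ is a counter-example. Therefore $I$ is finite and $f$ is nilpotent. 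The step I expect to be the real work is the middle one: verifying that passing to $\lambda|_{G\times\hat I}$, $\lambda|_{G\times\hat K}$ genuinely decreases the complexity measure, since the annihilators $\ann_H(\bar g)$ occurring in $\rbdn_\lambda(H)$ need not lie inside $\hat I$ or $\hat K$. If this bookkeeping resists, the alternative — presumably the route behind the introduction's remark on comparability in a minimal counter-example — is to re-run the arguments of Claims~\ref{images} and~\ref{kernels} on a suitable bilinear quasi-form induced by $\lambda$ so as to force $I$ and $\ker f^{\circ n}$ to be $\ale$-comparable outright, which is already impossible since they are infinite with finite intersection.
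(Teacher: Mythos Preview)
Your overall strategy is exactly the paper's: apply Lemma~\ref{sum} to $f$, get the almost direct decomposition $H/\ann_H(y_j)\aeq\im f^{\circ n}+\ker f^{\circ n}$, suppose both summands infinite, restrict $\lambda$ to each summand, and derive a contradiction via minimality and Corollary~\ref{aann}. The gap is precisely the step you flagged as ``the real work''.

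Your claim that the addition map has finite fibres is false. The map $\hat I/\ann_H(y_j\bar g)\times\hat K/\ann_H(y_j\bar g)\to H/\ann_H(y_j\bar g)$ has fibres of size commensurable with $(\hat I\cap\hat K)/\ann_H(y_j\bar g)\aeq\ann_H(y_j)/(\ann_H(y_j)\cap\ann_H(\bar g))$, and this is finite if and only if $\ann_H(y_j)\ale\ann_H(\bar g)$. For an arbitrary tuple $\bar g$ from $G$ there is no reason this should hold, so Remark~\ref{bdn_cover} does not apply and you cannot deduce $\bdn(\hat I/\ann_H(y_j\bar g))<\bdn(H/\ann_H(y_j\bar g))$. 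Indeed the ``extra'' burden coming from $\ann_H(y_j)/\ann_H(y_j\bar g)$ sits entirely inside $\hat I/\ann_H(y_j\bar g)$ (and inside $\hat K/\ann_H(y_j\bar g)$), so the strict inequality can genuinely fail.

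The paper's fix is to restrict the \emph{first} coordinate as well, to $A=\{g\in G:\ann_H(y_j)\ale\ann_H(g)\}$. Then every $\bar g$ ranging in the definition of $\rbdn$ lies in $A$, so $\ann_H(y_j)\ale\ann_{B_i}(\bar g)$ and one gets cleanly $\rbdn_{\lambda_i}(B_i)=\bdn(B_i/\ann_H(y_j))$; now the almost-direct decomposition in $H/\ann_H(y_j)$ has honestly finite intersection and Remarks~\ref{bdn_product} and~\ref{bdn_cover} give the strict drop. The price is that after concluding $\lambda\restriction(A\times H)$ is virtually almost trivial, you only know $\aann_G(H)\cap A$ has finite index in $A$, not in $G$. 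This is where principal indiscernibility and subgroup-genericity enter: Claim~\ref{monotone} gives $y_i\in A$ for $i<j$, principal indiscernibility then places $y_i$ in the connected component of $A$ over $y_j$, hence in $\aann_G(H)$; finally subgroup-genericity of $y_i$ over $\emptyset$ forces $\aann_G(H)$ (which is $\emptyset$-definable by $\omega$-categoricity) to have finite index in $G$.
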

\bpfc By Lemma \ref{sum}, we have an almost direct decomposition $H/\ann_H(y_j)\aeq \im f^{\circ n}+\ker f^{\circ n}$ for some $n<\omega$. 
Put $A=\{g\in G:\ann_H(y_j)\ale\ann_H(g)\}$, and let $B_1,B_2\le H$ be the preimages of $ \im f^{\circ n}$ and $\ker f^{\circ n}$, respectively. If $f$ were neither invertible nor nilpotent, then both summands are infinite.
For $i=1,2$ consider the restricted bilinear quasi-forms
$$\lambda_i:A\times B_i\to K.$$ By Remarks \ref{bdn_product} and \ref{bdn_cover} and the definition of $A$ we have
$$\begin{aligned}\rbdn_{\lambda_i}(B_i)&=\bdn(B_i/\ann_H(y_j))=\bdn(B_i/(B_1\cap B_2))\\&<\bdn((B_1+B_2)/(B_1\cap B_2))=\bdn(H/\ann_H(y_j))\le\rbdn_\lambda(H).\end{aligned}$$
By induction $\lambda_1$ and $\lambda_2$ are almost trivial, and so is the restriction
$$\lambda:A\times H\to K.$$ 
As $y_i\in A^0_{y_j}$ for $i<j$ by Claim \ref{monotone} and principal indiscernibility, the image 
$\im\lambda_{y_i}$ is finite and $y_i\in \aann_G(H)$. By subgroup-genericity of $y_i$ we get that
$\aann_G(H)\aeq G$, and $\lambda$ is virtually almost trivial by Corollary \ref{aann}, a contradiction.
\epf

\begin{claim} For $i<j<k$ we have $\lambda_{y_i,y_k}\equiv\lambda_{y_j,y_k}\circ\lambda_{y_i,y_j}$.\end{claim}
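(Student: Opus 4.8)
The plan is to unwind the definitions and reduce everything to two facts: first, that $\lambda_{y_j}\circ\lambda_{y_j}^{-1}$ collapses to the identity on $\im\lambda_{y_j}$ (because $\lambda$, being a bilinear quasi-form, has $\lambda_{y_j}$ an honest partial homomorphism, so $\coker\lambda_{y_j}=\{0\}$); and second, a bookkeeping argument showing that the resulting restriction of $\lambda_{y_i,y_k}$ to a finite-index subgroup is $\equiv$-equivalent to $\lambda_{y_i,y_k}$. Concretely, since $\lambda_{g,g'}=\lambda_{g'}^{-1}\circ\lambda_g$, associativity of composition of additive relations (Lemma \ref{endom} and \cite[Lemma 27]{BCM79}) gives
\[\lambda_{y_j,y_k}\circ\lambda_{y_i,y_j}=(\lambda_{y_k}^{-1}\circ\lambda_{y_j})\circ(\lambda_{y_j}^{-1}\circ\lambda_{y_i})=\lambda_{y_k}^{-1}\circ(\lambda_{y_j}\circ\lambda_{y_j}^{-1})\circ\lambda_{y_i},\]
whereas $\lambda_{y_i,y_k}=\lambda_{y_k}^{-1}\circ\lambda_{y_i}$, so everything hinges on comparing the middle factor $\lambda_{y_j}\circ\lambda_{y_j}^{-1}$ with $\id$.

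Then I would compute the middle factor. By Remark \ref{inverse}, $\lambda_{y_j}\circ\lambda_{y_j}^{-1}=\id_{\im\lambda_{y_j}}+(\im\lambda_{y_j}\times\coker\lambda_{y_j})=\id_{\im\lambda_{y_j}}$, using $\coker\lambda_{y_j}=\{0\}$. Substituting and unwinding the definition of $\circ$ gives $\id_{\im\lambda_{y_j}}\circ\lambda_{y_i}=\lambda_{y_i}\cap(H\times\im\lambda_{y_j})=\lambda_{y_i}\cap(G_1\times K)$, where $G_1:=\lambda_{y_i}^{-1}[\im\lambda_{y_j}]\le\dom\lambda_{y_i}$; hence, again unwinding $\circ$ and using $G_1\subseteq\dom\lambda_{y_i}$,
\[\lambda_{y_j,y_k}\circ\lambda_{y_i,y_j}=\lambda_{y_k}^{-1}\circ\bigl(\lambda_{y_i}\cap(G_1\times K)\bigr)=(\lambda_{y_k}^{-1}\circ\lambda_{y_i})\cap(G_1\times H)=\lambda_{y_i,y_k}\cap(G_1\times H).\]

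It then remains to observe that $G_1$ has finite index in $H$, for then $\lambda_{y_i,y_k}\cap(G_1\times H)\equiv\lambda_{y_i,y_k}$ by taking the finite subgroup in the definition of $\equiv$ to be $\{0\}$. Finite index of $G_1$ is where the hypotheses on the sequence enter, through Claim \ref{monotone}: as $i<j$ we have $\im\lambda_{y_i}\ale\im\lambda_{y_j}$, so $|\im\lambda_{y_i}:\im\lambda_{y_i}\cap\im\lambda_{y_j}|<\omega$; writing $G_1=\lambda_{y_i}^{-1}[\im\lambda_{y_i}\cap\im\lambda_{y_j}]$, Lemma \ref{add_basics}(3) bounds $|\dom\lambda_{y_i}:G_1|$ by that index, and $\dom\lambda_{y_i}$ itself has finite index in $H$ since $\lambda_{y_i}$ is a quasi-homomorphism.

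I do not expect a real obstacle here: this is routine manipulation of additive relations, serving as a technical preparation for the analysis of the quasi-endomorphisms $\bar\lambda_{y_i,y_j}$ that follows. The only points demanding care are that the operations $\circ$ and $+$ on additive relations are \emph{not} the subgroup operations --- so $\lambda_{y_j}\circ\lambda_{y_j}^{-1}$ is the \emph{diagonal} of $\im\lambda_{y_j}$, not $\im\lambda_{y_j}\times\im\lambda_{y_j}$, which is exactly where triviality of the cokernel of a bilinear quasi-form is used --- and the mechanical verification that the displayed intersections and compositions commute as claimed, which relies on the containment $G_1\subseteq\dom\lambda_{y_i}$.
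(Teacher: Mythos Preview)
Your proof is correct and follows essentially the same approach as the paper: both expand $\lambda_{y_j,y_k}\circ\lambda_{y_i,y_j}=\lambda_{y_k}^{-1}\circ(\lambda_{y_j}\circ\lambda_{y_j}^{-1})\circ\lambda_{y_i}$, use Remark~\ref{inverse} together with $\coker\lambda_{y_j}=\{0\}$ to collapse the middle factor to $\id_{\im\lambda_{y_j}}$, and then invoke $\im\lambda_{y_i}\ale\im\lambda_{y_j}$ (Claim~\ref{monotone}) to see that the resulting restriction differs from $\lambda_{y_i,y_k}$ only on a finite-index subgroup. If anything, your version is more explicit than the paper's in tracking why the intermediate $\equiv$ holds, computing the restriction $\lambda_{y_i,y_k}\cap(G_1\times H)$ directly rather than manipulating $\id_{\im\lambda_{y_i}}$ and $\id_{\im\lambda_{y_j}}$ symbolically.
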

\bpfc Note that $\lambda_g\circ\lambda_g^{-1}\equiv\id_{\im\lambda_g}$ for any $g\in G$ by Remark \ref{inverse}. Hence
$$\begin{aligned}\lambda_{y_0,y_j}&=\lambda_{y_j}^{-1}\circ\lambda_{y_0}
=\lambda_{y_j}^{-1}\circ\id_{\im\lambda_{y_0}}\circ\lambda_{y_0}\\
&\equiv\lambda_{y_j}^{-1}\circ\id_{\im\lambda_{y_i}}\circ\lambda_{y_0}\equiv
\lambda_{y_j}^{-1}\circ\lambda_{y_i}\circ\lambda_{y_i}^{-1}\circ\lambda_{y_0}=\lambda_{y_i,y_j}\circ\lambda_{y_0,y_i}.\qedhere\end{aligned}$$\epf

\begin{claim}\label{noteq} For $i\not=j$ we have $\ann_H(y_i)\not\aeq\ann_H(y_j)$.\end{claim}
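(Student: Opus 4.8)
The plan is to argue by contradiction from $\ann_H(y_i)\aeq\ann_H(y_j)$ for some $i\neq j$, say $i<j$, and to show this forces $\lambda$ to be virtually almost trivial. By Claim~\ref{monotone} (and indiscernibility) such a commensurability would propagate along the whole sequence, so $\ann_H(y_k)\aeq\ann_H(y_0)$ for all $k$, and moreover $\ann_H(y_0)\aeq\aann_H(G)$ by the computation in the proof of Claim~\ref{monotone}. The key object is the quasi-endomorphism $\bar\lambda_{y_i,y_j}$ of $H/\ann_H(y_j)$ induced via Remark~\ref{quasi-end}: since $\ker\lambda_{y_i,y_j}=\ann_H(y_i)\aeq\ann_H(y_j)=\coker\lambda_{y_i,y_j}$, the induced quasi-endomorphism has finite kernel, hence is invertible by Corollary~\ref{invertible}(1) (equivalently, it falls on the ``invertible'' side of the dichotomy in Claim~\ref{nilpotent}).

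The heart of the argument should be to exploit invertibility together with the cocycle identity $\lambda_{y_i,y_k}\equiv\lambda_{y_j,y_k}\circ\lambda_{y_i,y_j}$: if $\bar\lambda_{y_i,y_j}$ is invertible for the step from $i$ to $j$, then by indiscernibility it is invertible for every consecutive pair, and composing, $\bar\lambda_{y_0,y_n}$ is invertible for all $n$. In particular, for a fixed pair $0<1$ the relation $\lambda_{y_0,y_1}$ is an invertible quasi-endomorphism of $H/\aann_H(G)$ (using $\ann_H(y_j)\aeq\aann_H(G)$), and hence $\im\lambda_{y_0}$ and $\im\lambda_{y_1}$ are commensurable, both $\aeq\im\lambda$ after passing to the appropriate subgroups. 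The plan is then to show this contradicts either the minimality of $\rbdn_\lambda(G)+\rbdn_\lambda(H)+\bdn(K)$ or the fact that $\lambda$ is a counterexample: if all the $y_k$ have the same annihilator and the same image (up to commensurability), one restricts $\lambda$ to the subgroup $\aann_G(H)$ and the subgroup $\aann_H(G)$ and uses that on an infinite subgroup-generic sequence the $\lambda_{y_k}$ must then repeat, so that $\im\lambda$ is forced to be finite on a finite-index subgroup, i.e.\ $\lambda$ is virtually almost trivial --- contradicting that $\lambda$ is a counterexample.

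Concretely, I would proceed: (i) assume $\ann_H(y_i)\aeq\ann_H(y_j)$; (ii) propagate via Claim~\ref{monotone} and indiscernibility to get $\ann_H(y_k)\aeq\aann_H(G)$ for all $k$ and $\im\lambda_{y_k}\aeq\im\lambda_{y_0}=:L$ (the last commensurability coming from Lemma~\ref{kerim}, since $\ann_H(y_k)\aeq\ann_H(y_{k'})$ and $\im\lambda_{y_k}\ale\im\lambda_{y_{k'}}$ by Claim~\ref{monotone}); (iii) conclude $\lambda_{y_k}$, viewed as a quasi-homomorphism $G\to K$ restricted to the finite-index subgroup $\aann_G(H)$, has bounded finite cokernel and, after quotienting $K$ by a fixed finite group and $H$ by $\aann_H(G)$, induces an \emph{injective} quasi-homomorphism with fixed image $L$; (iv) by $\omega$-categoricity there are only finitely many definable subgroups of $H$ over a fixed finite parameter set, and the preimages $\lambda_{y_k}^{-1}[C]$ for a suitable fixed $C$ together with Claim~\ref{preimages} and Claim~\ref{noteq}-type monotonicity already yield an infinite strictly descending chain unless the chain stabilises, at which point $\lambda$ is forced to be almost trivial on $\aann_G(H)\times\aann_H(G)$, hence virtually almost trivial; (v) this contradicts the choice of $\lambda$ as a counterexample.

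The main obstacle I anticipate is step (iv): turning ``all $\ann_H(y_k)$ commensurable and all $\im\lambda_{y_k}$ commensurable'' into an actual finiteness statement about $\im\lambda$. The difficulty is that commensurability is not equality, so one cannot directly say the $\lambda_{y_k}$ are ``the same''; one has to pass to a fixed finite-index subgroup and a fixed finite subgroup of $K$ uniformly in $k$ (which subgroup-genericity and $\omega$-categoricity should provide, since there are only boundedly many relevant definable subgroups), and then argue that on that fixed quotient the family $\{\lambda_{y_k}\}$ is a bounded family of quasi-endomorphisms with trivial kernel, so by invertibility and $\omega$-categoricity (finitely many definable sets over the fixed parameters) the $\lambda_{y_k}$ themselves take only finitely many values --- contradicting that the $y_k$ are a genuinely generic infinite sequence unless $\im\lambda$ was finite to begin with on that subgroup. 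Getting the parameter-management exactly right, so that everything is definable over a fixed finite set while $k$ ranges over an infinite index set, is where the real care is needed; the algebra (cocycle identity, invertibility dichotomy, commensurability bookkeeping) is all supplied by the earlier claims and by Corollary~\ref{invertible} and Lemma~\ref{kerim}.
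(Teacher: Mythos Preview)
Your setup is correct through the point where you observe that $\bar\lambda_{y_i,y_j}$ is an invertible quasi-endomorphism of $\bar H=H/\ann_H(y_j)$ and that the cocycle identity holds. But from there the argument has a genuine gap: invertibility alone is far too weak to finish, and your step (iv) does not work as stated.

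The problem with (iv) is a parameter issue. The groups $\lambda_{y_k}^{-1}[C]$ (and more generally the quasi-homomorphisms $\lambda_{y_k}$) are definable over $y_k$, not over any fixed finite set; $\omega$-categoricity therefore gives no bound on how many distinct such objects arise as $k$ varies. Likewise, ``all $\ann_H(y_k)$ commensurable and all $\im\lambda_{y_k}$ commensurable'' does not imply that the $\lambda_{y_k}$, or even their equivalence classes as quasi-homomorphisms, take only finitely many values. (There is also a slip in (iii): $\lambda_{y_k}$ is a map $H\to K$, and neither $\aann_G(H)$ nor $\aann_H(G)$ has finite index when $\lambda$ is a counterexample, by Corollary~\ref{aann}.)

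What the paper actually proves --- and what your outline is missing --- is that $\bar\lambda_{y_i,y_j}$ is not merely invertible but \emph{congruent to the identity modulo nilpotents}. This is obtained by passing to the ring $R$ of definable quasi-endomorphisms of $\bar H$: Claim~\ref{nilpotent} forces every element to be invertible or nilpotent, so the nilpotents form an ideal $I$ and $R/I$ is a locally finite division ring, hence a locally finite field by Wedderburn. Then $\hat\lambda_{y_i,y_j}$ has some fixed finite multiplicative order $N$ in $R/I$; combining this with indiscernibility (so $\hat\lambda_{y_i,y_j}+I$ is independent of $i,j$) and the cocycle identity (so it is its own $N$-th power) forces $\hat\lambda_{y_i,y_j}+I=\id+I$. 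Only with this ``almost identity'' in hand can one make the explicit computation with the original $x_i$'s that produces two definably quasi-isomorphic groups, one of infinite index in the other, contradicting Corollary~\ref{invertible}(2). Your sketch never reaches the identity-modulo-nilpotents conclusion, and without it there is no route to the contradiction.
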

\bpfc Suppose otherwise. Then $\ann_H(y_i)\aeq\ann_H(y_j)$ for all $i,j\in\mathbb Q$.
Let $R$ be the ring of definable quasi-endomorphisms of $\bar H=H/\ann_H(y_0)$. Note that $\bar H$ is infinite, as $\ann_H(y_0)$ has infinite index in 
$H$ by assumption (otherwise, as above, we get that
$\lambda$ is virtually almost trivial by Corollary \ref{aann}).

It follows from Claim \ref{nilpotent} that the set of nilpotent quasi-endomorphisms of $\bar H$ is an ideal: it is clearly invariant under left and right multiplication; if $f$ and $g$ are nilpotent but $f+g$ is not nilpotent, there is invertible $h$ with $h(f+g)=hf+hg=\id$. So $hf=\id-hg$ is nilpotent. 
But $(\id-hg)(\id+hg+(hg)^2+(hg)^3+\cdots)=\id$ (note that the sum is finite, as $hg$ is nilpotent), so $hf=\id-hg$ is invertible, a contradiction.
Thus $R/I$ is a division ring, which is locally finite by $\omega$-categoricity, whence a locally finite field by Wedderburn's Theorem.

Consider $0<i<j$. As $\ann_H(y_0)\aeq\ann_H(y_i)\aeq\ann_H(y_j)$, the quasi-endomorphism $\bar\lambda_{y_i,y_j}$ has finite kernel, and must be invertible. By local finiteness and indiscernibility, it has a fixed finite multiplicative order $N$ modulo $I$.
Hence there are only finitely many possibilities for $\hat\lambda_{y_i,y_j}+I$ (where $\hat\lambda$ is the equivalence class of $\bar\lambda$ in $R$).
By indiscernibility, $\hat\lambda_{y_i,y_j}+I$ does not depend on $i,j$. But $\hat\lambda_{y_j,y_k}\cdot\hat\lambda_{y_i,y_j}=\hat\lambda_{y_i,y_k}$ for
$i<j<k$, whence 
$$\hat\lambda_{y_i,y_j}+I=(\hat\lambda_{y_i,y_{i+{(j-i)/N}}})^N+I\in\id_{\bar H}+I.$$
By indiscernibility, $\hat\lambda_{x_i-x_j,x_k-x_\ell}\in\id_{\bar H}+I$ for all $0<i<j<k<\ell$ in $\omega$.
Now 
$$\hat\lambda_{x_1-x_3,x_2-x_3}=\hat\lambda_{x_2-x_3,x_4-x_5}^{-1}\cdot\hat\lambda_{x_1-x_3,x_4-x_5}\in\id_{\bar H}+I.$$
Let $B=\im(\lambda_{x_1-x_3,x_2-x_3}-\id_H)$, a definable subgroup of infinite index in $H$ almost containing $\ann_H(y_0)$.
Then for all $h\in H^0_{x_1,x_2,x_3}$ there is $b\in B$ with $h+b\in\lambda_{x_1-x_3,x_2-x_3}(h)$
(as $\dom\lambda_{x_1-x_3,x_2-x_3}$ is a $\{x_1,x_2,x_3\}$-definable subgroup of $H$ of finite index). Hence $\lambda_{x_2-x_3}(h+b)=\lambda_{x_1-x_3}(h)$, that is
$$\lambda(x_1-x_3,h)=\lambda(x_2-x_3,h+b)=\lambda(x_2-x_3,h)+\lambda(x_2-x_3,b),$$
whence
$$\lambda(x_1-x_2,h)=\lambda((x_1-x_3)-(x_2-x_3),h)=\lambda(x_2-x_3,b).$$
But this means that $\im\lambda_{x_1-x_2}\ale\lambda_{x_2-x_3}[B]$. On the other hand, as $\lambda_{y_i,y_j}$ is a
quasi-isomorphism of $\bar H$ for $i<j$, so is $\lambda_{x_i-x_j,x_k-x_\ell}$ for all $i<j<k<\ell$. In particular
$$\im\lambda_{x_2-x_3} \aeq\im\lambda_{x_4-x_5}\aeq\im\lambda_{x_1-x_2}\ale \lambda_{x_2-x_3}[B].$$
But $B$ has infinite index in $H$ and $\ker\lambda_{x_2-x_3}\ale B$,
so $B/(B\cap\ker\lambda_{x_2-x_3})$ has infinite index in $H/(B\cap\ker\lambda_{x_2-x_3})$,
and both of them are definably quasi-isomorphic to $\im\lambda_{x_2-x_3}$, hence definably quasi-isomorphic to each other. This contradicts Corollary \ref{invertible}(2).
\epf

Note that for $i<j<k<\ell$ and $B\le H$ definable over $\{y_s,y'_s:s\notin[j,k]\}$ we have $\lambda_{y_j,y_\ell}[B]\ale\lambda_{y_k,y_\ell}[B]$ by Claim \ref{monotone}, and $\lambda_{y_i,y_k}[B]\ale\lambda_{y_i,y_j}[B]$ by Claim \ref{preimages}.

\begin{claim}\label{L-images} If $i<j<k<\ell$ and $B\le H$ is definable over $\{y_s,y'_s:s\not=i,j,k,\ell\}$ then $\lambda_{y_i,y_j}[B]$ and $\lambda_{y_k,y_\ell}[B]$ are $\ale$-comparable.\end{claim}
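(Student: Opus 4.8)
The plan is to prove this by the same scheme as Claims \ref{images} and \ref{kernels}: show that a failure of $\ale$-comparability would produce, via Lemma \ref{reducible} (or \ref{reducible-cap}) applied inside $H$, a minimal reducible configuration among finitely many of the subgroups $\lambda_{y_i,y_j}[B]$, indexed by pairwise disjoint two-element index sets, hence an index $i_0$ that is ``caught'' by the others; then a definable, finite-index subgroup of $G$ supplied by the induction hypothesis applied to an induced bilinear quasi-form of strictly smaller complexity, together with principal indiscernibility (which forces the generator with index $i_0$ into the connected component of that subgroup over the remaining parameters), yields the desired comparability and a contradiction. As observed in the text, Claims \ref{images} and \ref{kernels} use only principal indiscernibility, not subgroup-genericity, so this scheme is available here.

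Concretely, I would introduce the auxiliary bilinear quasi-form $\mu$ on $(G\times G)\times(B\times H)$ given by $\mu\big((g,g'),(b,h)\big)=\lambda(g,b)-\lambda(g',h)$; one checks directly that this is a definable bilinear quasi-form and that $\ker\mu_{(g,g')}=\{(b,h)\in B\times H:\lambda(g,b)=\lambda(g',h)\}$, which is commensurable with the additive relation $\lambda_{g,g'}\cap(B\times H)$ and whose projection onto the second coordinate is $\lambda_{g,g'}[B]$. Since a homomorphism sends $\ale$-comparable subgroups to $\ale$-comparable subgroups, it suffices to prove that the $\ker\mu_{(y_i,y_j)}$ are pairwise $\ale$-comparable for disjoint ordered index pairs, and this is exactly the conclusion of (the proof of) Claim \ref{kernels} applied to $\mu$ and a principal indiscernible generator sequence of pairs extracted from our sequence. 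Comparability for general $i<j<k<\ell$ then follows from comparability for a ``consecutive-pairs'' subsequence by the monotonicity relations recorded just before the claim (Claims \ref{monotone} and \ref{preimages}) together with indiscernibility of $\langle y_s:s\in\mathbb Q\rangle$, after reindexing using density and homogeneity of $\mathbb Q$ so that the finitely many parameters of $B$ fall outside the relevant intervals.

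I expect two points to be the real work. First, and most serious, is controlling complexity: a priori $\rbdn_\mu(G\times G)\le 2\,\rbdn_\lambda(G)$ and $\rbdn_\mu(B\times H)\le 2\,\rbdn_\lambda(H)$, so the sub-forms produced inside the proof of Claim \ref{kernels} applied to $\mu$ need not be smaller than $\lambda$ in the sense of the minimal counter-example; one must therefore either replace $G\times G$ and $B\times H$ by smaller groups that still see the pair $(y_i,y_j)$ and the subgroup $\lambda_{y_i,y_j}[B]$ (e.g.\ exploiting that $\lambda_{y_i,y_j}$ factors through finite-burden quotients by the kernels $\ann_H(y_j)$), or carry out the reduction in two stages --- first in the $B$-coordinate, affecting only $\rbdn_\lambda(H)$, then in the $H$-coordinate --- so that only one reduced burden drops at a time while the target burden $\bdn(K)$ is what strictly decreases. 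Second, one has to produce a principal indiscernible sequence of pairs genuinely adapted to $\lambda$: the naive zip $\langle(y_{2m},y_{2m+1}):m\rangle$ need not be indiscernible, so the pairs should be built as infinitesimal differences $V_m-V_{m+\epsilon'}$ of an indiscernible sequence $\langle V_m\rangle$ in $G\times G$ obtained from the $x_i$ (as in Proposition \ref{connected}), and one must verify that the resulting $\ker\mu_{V_m-V_{m+\epsilon'}}$ are exactly the $\lambda_{y_i,y_j}[B]$, for disjoint ordered pairs, that we ultimately want to compare.
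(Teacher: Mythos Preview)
Your plan diverges substantially from the paper's argument, and the gap you yourself flag as ``most serious'' is indeed fatal as stated. The minimal counter-example is chosen to minimise $\rbdn_\lambda(G)+\rbdn_\lambda(H)+\bdn(K)$, and every inductive step must produce a bilinear quasi-form with strictly smaller sum. For your auxiliary form $\mu:(G\times G)\times(B\times H)\to K$ there is no reason that $\rbdn_\mu(G\times G)\le\rbdn_\lambda(G)$ or $\rbdn_\mu(B\times H)\le\rbdn_\lambda(H)$: for finitely many pairs $(b_i,h_i)$ the quotient $(G\times G)/\ann_{G\times G}(\overline{(b,h)})$ embeds into a power of $K$, so its burden is governed by $\bdn(K)$, not by $\rbdn_\lambda(G)$. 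Thus when you replay the proof of Claim~\ref{kernels} inside $\mu$, the sub-form $\mu_i$ need not be smaller than $\lambda$, and the inductive hypothesis is unavailable. Neither of your proposed fixes resolves this: ``replacing $G\times G$ and $B\times H$ by smaller groups'' does not specify groups for which one can bound the reduced burdens, and the ``two-stage'' reduction still has $K$ as target, so $\bdn(K)$ does not drop. The second difficulty (manufacturing a principal indiscernible sequence of disjoint pairs) is also genuine, and the infinitesimal trick you sketch would produce elements of the form $y_i-y_{i+\epsilon'}$, not pairs $(y_i,y_j)$, so the resulting kernels need not coincide with the $\lambda_{y_i,y_j}$ you want.

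The paper bypasses both issues entirely. Rather than packaging the problem into a new bilinear quasi-form on product groups and invoking Claim~\ref{kernels}, it works directly with $\lambda$ and reduces the \emph{target} burden. Assuming incomparability, it forms the induced quasi-form $\tilde\lambda:A\times B'\to\lambda_{y_k}[B]/(\lambda_{y_k}[B]\cap\lambda_{y_\ell}\lambda_{y_i,y_j}[B])$, where $A=\{g\in G:\lambda_g[B]\ale\lambda_{y_k}[B]\}$ and $B'$ is its almost annihilator in $B$. The incomparability of $\lambda_{y_i,y_j}[B]$ and $\lambda_{y_k,y_\ell}[B]$, together with $\lambda_{y_k}[B]\ale\lambda_{y_\ell}[B]$, yields the strict inequality $\bdn(\lambda_{y_k}[B]/(\lambda_{y_k}[B]\cap\lambda_{y_\ell}\lambda_{y_i,y_j}[B]))<\bdn(K)$ via Remark~\ref{bdn_product}, so by Claim~1 the induction applies to $\tilde\lambda$. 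Principal indiscernibility then places some $y_{k'}$ with $j<k'<k$ into the connected component of $\aann_A^{\tilde\lambda}(B')$, giving $\lambda_{y_{k'}}[B]\ale\lambda_{y_\ell}\lambda_{y_i,y_j}[B]$ and hence $\lambda_{y_{k'},y_\ell}[B]\ale\lambda_{y_i,y_j}[B]$; indiscernibility transports this to $k$. The point is that by staying with $\lambda$ and pushing the comparison into a quotient of $K$, the reduced burdens $\rbdn_\lambda(G)$ and $\rbdn_\lambda(H)$ are automatically controlled by Claim~1, and only $\bdn(K)$ needs to drop.
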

\bpfc Suppose not, and put 
$$A=\{g\in G:\lambda_g[B]\ale\lambda_{y_k}[B]\}\qquad\mbox{and}\qquad B'=\{h\in B:\lambda'_h[A]\ale\lambda_{y_k}[B]\}.$$
Then $A=\aann_G^{\bar\lambda}(B)$ and $B'=\aann_B^{\bar\lambda}(A)$, where  $\bar\lambda:G\times B\to K/\lambda_{y_k}[B]$ is the induced bilinear quasi-form. Then $B\ale B'$ by Lemma \ref{symmetry}, so $B'$ has finite index in $B$.

Consider the induced bilinear quasi-form
$$\tilde\lambda:A\times B'\to\lambda_{y_k}[B]/( \lambda_{y_k}[B]\cap\lambda_{y_\ell}\lambda_{y_i,y_j}[B]).$$
As $\lambda_k[B]\ale\lambda_\ell[B]$, we have
$$\begin{aligned}\bdn(\lambda_{y_k}[B]/( \lambda_{y_k}[B]\cap\lambda_{y_\ell}\lambda_{y_i,y_j}[B]))&
=\bdn(\lambda_{y_\ell}\lambda_{y_k,y_\ell}[B]/(\lambda_{y_\ell}\lambda_{y_k,y_\ell}[B]\cap
\lambda_{y_\ell}\lambda_{y_i,y_j}[B])\\
&\le\bdn(\lambda_{y_k,y_\ell}[B]/(\lambda_{y_i,y_j}[B]\cap
\lambda_{y_k,y_\ell}[B]))\\
&<\bdn((\lambda_{y_i,y_j}[B]+\lambda_{y_k,y_\ell}[B])/(\lambda_{y_i,y_j}[B]\cap
\lambda_{y_k,y_\ell}[B]))\\
&\le\bdn(H/\ann_H(y_\ell))=\bdn(\im\lambda_{y_\ell})\le\bdn(K),\end{aligned}$$
the bilinear quasi-form $\tilde\lambda$ is virtually almost trivial by induction. Since $y_{k'}\in A^0_{y_i,y_j,y_k,y_\ell}$ for some 
$j<k'<k$ such that $B$ is definable over $\{y_s,y'_s:s\notin[k',k]\cup\{i,j,\ell\}\}$ by Claim \ref{monotone} and principal indiscernibility, it follows that
$$\lambda_{y_{k'}}[B]\ale\lambda_{y_\ell}\lambda_{y_i,y_j}[B].$$
Hence $\lambda_{y_{k'},y_\ell}[B]\ale\lambda_{y_i,y_j}[B]+\ann_B(y_\ell)\ale\lambda_{y_i,y_j}[B]$, and $\lambda_{y_k,y_\ell}[B]\ale\lambda_{y_i,y_j}[B]$ by indiscernibility.\epf

\begin{claim}\label{L-kernels} $\ker\bar\lambda_{y_i,y_j}$ and $\ker\bar\lambda_{y_k,y_\ell}$ are $\ale$-comparable for all $i<j<k<\ell$, where $\bar\lambda_{y,y'}$ is the quasi-homomorphisms from $H$ to $H/\ann_H(y_j)$ induced by $\lambda_{y,y'}$.\end{claim}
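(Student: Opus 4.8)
The plan is to imitate the earlier claims of this proof — structurally this is to Claim \ref{L-images} what Claim \ref{kernels} is to Claim \ref{images}, except that the complexity drop will take place in the $\rbdn_\lambda(H)$-slot rather than the $\bdn(K)$-slot. So: assume the conclusion fails, use incomparability to produce an induced bilinear quasi-form of strictly smaller complexity $\rbdn_\lambda(G)+\rbdn_\lambda(H)+\bdn(K)$, invoke minimality of the counter-example to get that this sub-form is virtually almost trivial, and then push the result back along the sequence by principal indiscernibility and plain indiscernibility.

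First I would identify the two kernels concretely. By Claim \ref{monotone}, $\ann_H(y_\ell)\ale\ann_H(y_j)$ and $\im\lambda_{y_k}\ale\im\lambda_{y_\ell}$, so $\bar\lambda_{y_k,y_\ell}\colon H\to H/\ann_H(y_j)$ is a well-defined quasi-homomorphism with
$$N_2:=\ker\bar\lambda_{y_k,y_\ell}=\lambda_{y_k}^{-1}\bigl[\lambda_{y_\ell}[\ann_H(y_j)]\bigr]\supseteq\ann_H(y_k),$$
a definable subgroup of $H$ over $\{y_j,y_k,y_\ell\}$; likewise $N_1:=\ker\bar\lambda_{y_i,y_j}=\ann_H(y_i)$. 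Assume for contradiction that $N_1,N_2$ are not $\ale$-comparable, so $N_1/(N_1\cap N_2)$ and $N_2/(N_1\cap N_2)$ are both infinite. Put $A=\aann_G(N_1\cap N_2)=\{g\in G:(N_1\cap N_2)\ale\ann_H(g)\}$, a definable subgroup of $G$ over $\{y_i,y_j,y_k,y_\ell\}$, and consider the restricted bilinear quasi-form $\lambda_2\colon A\times N_2\to K$. Every $\bar g\in A$ satisfies $N_1\cap N_2\ale\ann_{N_2}(\bar g)$, so $\rbdn_{\lambda_2}(N_2)\le\bdn\bigl(N_2/(N_1\cap N_2)\bigr)$. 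Since $N_1\cap N_2\supseteq\ann_H(y_i)\cap\ann_H(y_k)=\ann_H(y_i,y_k)$, $H/(N_1\cap N_2)$ is a quotient of $H/\ann_H(y_i,y_k)$, whence $\bdn\bigl(H/(N_1\cap N_2)\bigr)\le\rbdn_\lambda(H)$; and as $N_1/(N_1\cap N_2)\times N_2/(N_1\cap N_2)$ embeds into $(N_1+N_2)/(N_1\cap N_2)\le H/(N_1\cap N_2)$ with both factors infinite, Remark \ref{bdn_product} yields
$$\rbdn_{\lambda_2}(N_2)\le\bdn\bigl(N_2/(N_1\cap N_2)\bigr)<\bdn\bigl((N_1+N_2)/(N_1\cap N_2)\bigr)\le\bdn\bigl(H/(N_1\cap N_2)\bigr)\le\rbdn_\lambda(H).$$
Combined with $\rbdn_{\lambda_2}(A)\le\rbdn_\lambda(G)$ from the Claim at the start of this proof ($\bdn(K)$ being unchanged), the complexity of $\lambda_2$ is strictly smaller than that of $\lambda$, so $\lambda_2$ is virtually almost trivial by minimality; by Corollary \ref{aann}, $\aann_A^{\lambda_2}(N_2)$ is then a finite-index definable subgroup of $A$, definable over $\{y_i,y_j,y_k,y_\ell\}$.

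Finally I would transfer the conclusion back down the sequence. Fix any index $s_0<i$; then $s_0\notin\{i,j,k,\ell\}$, so $A$, and hence $\aann_A^{\lambda_2}(N_2)$, is definable over $\{y_t,y'_t:t\neq s_0\}$, while $\ann_H(y_i)\ale\ann_H(y_{s_0})$ by Claim \ref{monotone} gives $N_1\cap N_2\subseteq\ann_H(y_i)\ale\ann_H(y_{s_0})$, so $y_{s_0}\in A$. By principal indiscernibility $y_{s_0}$ lies in the connected component of $A$ over $\{y_t,y'_t:t\neq s_0\}$, hence in $\aann_A^{\lambda_2}(N_2)$; that is, $N_2\ale\ann_H(y_{s_0})$. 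But the property ``$N_2\ale\ann_H(y_{s_0})$'' depends only on the tuple $(y_{s_0},y_j,y_k,y_\ell)$, which is strictly increasing and thus of the same type over $\emptyset$ as $(y_i,y_j,y_k,y_\ell)$; applying the corresponding automorphism gives $N_2\ale\ann_H(y_i)=N_1$, contradicting incomparability. Hence $N_1$ and $N_2$ are $\ale$-comparable.

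The main difficulty is the burden bookkeeping in the middle step, together with the choice of which factor to restrict. The complexity drops into the $\rbdn_\lambda(H)$-slot only because $N_1$ and $N_2$ each almost contain $\ann_H$ of a finite tuple from $G$ (namely $N_1=\ann_H(y_i)$ and $N_2\supseteq\ann_H(y_k)$), so that $H/(N_1\cap N_2)$ has burden at most $\rbdn_\lambda(H)$. And one must restrict to $A\times N_2$, not $A\times N_1$: the statement extracted from the inductive hypothesis via an intermediate index is then $N_2\ale\ann_H(y_{s_0})$, which is new and transfers to a contradiction, whereas restricting the $H$-factor to $N_1=\ann_H(y_i)$ would merely reproduce the already-known monotonicity relation $\ann_H(y_i)\ale\ann_H(y_{s_0})$ for $s_0<i$. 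The complementary subtlety is the choice $s_0<i$, which is simultaneously what makes $y_{s_0}\in A$ provable (through monotonicity, Claim \ref{monotone}) and what lets the final indiscernibility step land on the index $i$.
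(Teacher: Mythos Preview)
Your proof is correct and follows essentially the same route as the paper's: assume incomparability, restrict $\lambda$ to $A\times N_2$ with $A$ an almost-annihilator subgroup, drop the complexity in the $\rbdn_\lambda(H)$-slot via Remark \ref{bdn_product} and the fact that $\ann_H(y_k)\ale N_1\cap N_2$, invoke minimality to get virtual almost triviality, then use principal indiscernibility at some $s_0<i$ followed by ordinary indiscernibility to push back to index $i$ and reach a contradiction. The only cosmetic difference is that the paper takes $A=\aann_G(N_1)=\{g\in G:\ann_H(y_i)\ale\ann_H(g)\}$ rather than your $A=\aann_G(N_1\cap N_2)$; since $N_1\cap N_2\le N_1$ your $A$ is at least as large, and both choices make the burden estimate and the membership $y_{s_0}\in A$ go through identically.
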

\bpfc We have $\ker\bar\lambda_{y_i,y_j}=\ann_H(y_i)$; put $B=\ker\bar\lambda_{y_k,y_\ell}$ and suppose that they are not $\ale$-comparable. Let $A=\{g\in G:\ann_H(y_i)\ale\ann_H(g)\}$ and consider the restricted bilinear quasi-form
$$\bar\lambda:A\times B\to K.$$
Since $\ann_H(y_k)\le B\cap\ann_H(y_i)$, we have
$$\begin{aligned}\rbdn_{\bar\lambda}(B)&=\bdn(B/(B\cap\ann_H(y_i)))<\bdn((B+\ann_H(y_i))/(B\cap\ann_H(y_i)))\\
&\le\bdn(H/\ann_H(y_k))\le\rbdn_\lambda(H).\end{aligned}$$
Hence $\bar\lambda$ must be virtually almost trivial by induction. Since $y_s\in A^0_{y_i,y_j,y_k,y_\ell}$ for $s<i$, we have $\ker\lambda_{y_k,y_\ell}=B\ale\ann_H(y_s)$ for all $s<i$; the claim now follows from indiscernibility.\epf

\begin{claim}\label{case1} If $\im\lambda_{y_0,y_1}\ale\im\lambda_{y_2,y_3}$ then 
$\im\lambda_{y_0,y_j}\ale\im\lambda^{\circ n}_{y_i,y_j}$ for all $0<i<j$ and $1\le n<\omega$.\end{claim}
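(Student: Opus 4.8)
The plan is to induct on $n$, with the hypothesis $\im\lambda_{y_0,y_1}\ale\im\lambda_{y_2,y_3}$ entering only in the inductive step. Three routine facts will be used silently: for $i<j$ the additive relation $\lambda_{y_i,y_j}$ induces a quasi-endomorphism $\bar\lambda_{y_i,y_j}$ of $H/\ann_H(y_j)$ (Lemma~\ref{quasi-end}, its hypotheses holding by Claim~\ref{monotone}), so the iterates $\lambda_{y_i,y_j}^{\circ n}$ and their images make sense; a definable quasi-homomorphism $R$ is $\ale$-monotone, i.e.\ $A\ale B$ implies $R[A]\ale R[B]$; and $R\equiv R'$ implies $\im R\aeq\im R'$. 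The base case $n=1$, namely $\im\lambda_{y_0,y_j}\ale\im\lambda_{y_i,y_j}$, is immediate from the claim above that $\lambda_{y_i,y_k}\equiv\lambda_{y_j,y_k}\circ\lambda_{y_i,y_j}$ for $i<j<k$: taking $\lambda_{y_0,y_j}\equiv\lambda_{y_i,y_j}\circ\lambda_{y_0,y_i}$ we get $\im\lambda_{y_0,y_j}\aeq\lambda_{y_i,y_j}[\im\lambda_{y_0,y_i}]\subseteq\im\lambda_{y_i,y_j}$.

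A reformulation helps orient the target. Since $\coker(R'\circ R)\supseteq\coker R'$ and $\coker R\subseteq\im R$ for additive relations, $\ann_H(y_j)=\coker\lambda_{y_j}^{-1}\le\im\lambda_{y_i,y_j}^{\circ n}$ for all $n$; moreover $\im\lambda_{y_0,y_j}/\ann_H(y_j)\cong\im\lambda_{y_0}\cap\im\lambda_{y_j}$. As $\bar\lambda_{y_i,y_j}$ has kernel $\aeq\ann_H(y_i)/\ann_H(y_j)$, infinite by Claim~\ref{noteq}, it is not invertible (Corollary~\ref{invertible}), hence nilpotent (Claim~\ref{nilpotent}), so $\im\lambda_{y_i,y_j}^{\circ n}\aeq\ann_H(y_j)$ for $n$ large. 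Putting these together, the assertion of Claim~\ref{case1} is equivalent to $\im\lambda_{y_0,y_j}\ale\ann_H(y_j)$, i.e.\ (using $\im\lambda_{y_0}\ale\im\lambda_{y_j}$, Claim~\ref{monotone}) to finiteness of $\im\lambda_{y_0}$; so the combinatorial core is to squeeze $\im\lambda_{y_0,y_j}$ down, which is what the induction does.

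For the inductive step, first extend the hypothesis by indiscernibility: $\im\lambda_{y_a,y_b}\ale\im\lambda_{y_c,y_d}$ for all $a<b<c<d$ in $\Q$, and combining this with the monotonicity remarks recorded just before Claim~\ref{L-images} (which trade a narrower index interval of a block for a larger image) one gets the sharper statement $(\dagger)$: $\lambda_{y_a,y_b}[B]\ale\lambda_{y_b,y_c}[B]$ whenever $a<b<c$ and $B\le H$ is defined over parameters outside $[a,c]$, and in particular $\im\lambda_{y_a,y_b}\ale\im\lambda_{y_b,y_c}$. Now assume the statement for $n$, fix $0<i<j$, and pick $i'$ with $0<i'<i$ (density of $\Q$). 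Then $\im\lambda_{y_0,y_j}\aeq\lambda_{y_i,y_j}[\im\lambda_{y_0,y_i}]$ by the composition claim, $\im\lambda_{y_0,y_i}\ale\im\lambda_{y_{i'},y_i}^{\circ n}$ by the induction hypothesis, so $\im\lambda_{y_0,y_j}\ale\lambda_{y_i,y_j}[\im\lambda_{y_{i'},y_i}^{\circ n}]$; by $\ale$-monotonicity it remains to prove the key sub-claim $\im\lambda_{y_{i'},y_i}^{\circ n}\ale\im\lambda_{y_i,y_j}^{\circ n}$, after which $\lambda_{y_i,y_j}[\im\lambda_{y_{i'},y_i}^{\circ n}]\ale\lambda_{y_i,y_j}[\im\lambda_{y_i,y_j}^{\circ n}]=\im\lambda_{y_i,y_j}^{\circ(n+1)}$. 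The sub-claim is handled by an auxiliary induction on $n$: for $n=1$ it is $(\dagger)$; in the step, $\im\lambda_{y_{i'},y_i}^{\circ(m+1)}=\lambda_{y_{i'},y_i}[\im\lambda_{y_{i'},y_i}^{\circ m}]\ale\lambda_{y_{i'},y_i}[\im\lambda_{y_i,y_j}^{\circ m}]$ by the auxiliary hypothesis and monotonicity, and then $(\dagger)$, applied with $B=\im\lambda_{y_i,y_j}^{\circ m}$, replaces $\lambda_{y_{i'},y_i}$ by $\lambda_{y_i,y_j}$ and lands inside $\im\lambda_{y_i,y_j}^{\circ(m+1)}$.

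The main obstacle is exactly this last step's legitimacy: the subgroup $B=\im\lambda_{y_i,y_j}^{\circ m}$ is built from $y_i,y_j$, which lie in the index interval $[i',j]$ governing the monotonicity lemmas behind $(\dagger)$ (Claims~\ref{monotone} and~\ref{preimages}), so $(\dagger)$ does not apply verbatim, and a naive relocation of $i',i,j$ inside $\Q$ cannot move $y_i,y_j$ out of that interval. Overcoming this is the heart of the proof: one must either restructure the auxiliary induction so that no $[B]$-version of $(\dagger)$ with $B$ depending on interior parameters is ever needed, or pass to the restricted form $\lambda|_{G\times B}$ — a definable bilinear quasi-form on which the relevant sub-sequence of the $y$'s remains principal indiscernible — and re-run, on this restriction, the comparability-plus-direction arguments of Claims~\ref{monotone}, \ref{images} and~\ref{L-images} as in their own proofs, the delicate point being that $\im\lambda_{y_0,y_1}\ale\im\lambda_{y_2,y_3}$ together with the pre-Claim-\ref{L-images} monotonicity must be shown to persist after restriction. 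Granting this, the two nested inductions on $n$, the composition claim, and $\ale$-monotonicity of quasi-homomorphisms finish the proof; feeding a large $n$ into the result, with the nilpotence of $\bar\lambda_{y_i,y_j}$ and Corollary~\ref{aann}, is what makes Claim~\ref{case1} yield a contradiction downstream.
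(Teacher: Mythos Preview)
Your inductive step has a real gap, and you name it yourself: the auxiliary induction for the sub-claim $\im\lambda_{y_{i'},y_i}^{\circ n}\ale\im\lambda_{y_i,y_j}^{\circ n}$ requires applying your $(\dagger)$ to $B=\im\lambda_{y_i,y_j}^{\circ m}$, a subgroup defined over the very parameters $y_i,y_j$ lying inside the index interval that governs the monotonicity lemmas. You propose two ways around this but carry out neither; the second (re-running Claims~\ref{images}--\ref{L-images} for the restriction $\lambda|_{G\times B}$) is not innocent, since those claims invoke the global minimality assumption on $\rbdn_\lambda$ and $\bdn(K)$, which a restriction need not inherit. In fact even your $(\dagger)$ for $B=H$, namely $\im\lambda_{y_a,y_b}\ale\im\lambda_{y_b,y_c}$ with a shared middle index, does not follow from the hypothesis plus the pre-Claim~\ref{L-images} rules: the hypothesis only compares \emph{disjoint} index pairs $a<b<c<d$, and the two monotonicity rules (first index right, or second index left, enlarges the image) cannot collapse the gap between $b$ and $c$.

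The paper sidesteps the whole auxiliary induction by a different, and much simpler, choice of intermediate indices. Given $0<i<j$, pick $0<k<\ell<i$ and run a single chain under $\lambda_{y_i,y_j}$:
\[
\im\lambda_{y_0,y_j}\aeq\lambda_{y_i,y_j}[\im\lambda_{y_0,y_i}]\ale\lambda_{y_i,y_j}[\im\lambda_{y_0,y_k}]\ale\lambda_{y_i,y_j}[\im\lambda_{y_\ell,y_j}]\ale\lambda_{y_i,y_j}[\im\lambda_{y_i,y_j}^{\circ n}]=\im\lambda_{y_i,y_j}^{\circ(n+1)}.
\]
The first inequality is the pre-Claim~\ref{L-images} rule $\lambda_{y_0,y_i}[H]\ale\lambda_{y_0,y_k}[H]$ (second index moves left, image grows); the second is the hypothesis of the claim, applied via indiscernibility to the \emph{disjoint} pairs $0<k<\ell<j$; and the third is the inductive hypothesis itself with $\ell$ in the r\^ole of $0$ (licit by indiscernibility since $\ell<i<j$). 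Every monotonicity step uses only $B=H$, so the parameter-location issue that derails your argument never arises.
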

\bpfc We proceed by induction on $n$. For $n=1$ this is clear, as 
$\im\lambda_{y_0,y_j}\ale\im\lambda_{y_i,y_j}$ by Claim \ref{monotone}. Assume it holds for some $n$. 
Choose $0<k<\ell<i$. Then
$$\begin{aligned}\im\lambda_{y_0,y_j}&
\aeq\im(\lambda_{y_i,y_j}\circ\lambda_{y_0,y_i})
=\lambda_{y_i,y_j}[\im\lambda_{y_0,y_i}]\ale\lambda_{y_i,y_j}[\im\lambda_{y_0,y_k}]\\
&\ale\lambda_{y_i,y_j}[\im\lambda_{y_\ell,y_j}]\ale\lambda_{y_i,y_j}[\im\lambda_{y_i,y_j}^{\circ n}]
=\im(\lambda_{y_i,y_j}\circ\lambda_{y_i,y_j}^{\circ n})=\im\lambda_{y_i,y_j}^{\circ (n+1)}\end{aligned}$$
(the first inequality follows by the second part of Claim \ref{monotone}, the second inequality follows by the assumption of
the claim, and the last one by the inductive assumption).
\epf

\begin{claim}\label{case2} If $\im\lambda_{y_2,y_3}\ale\im\lambda_{y_0,y_1}$ then $\im\lambda_{y_0,y_k}\ale\im\lambda^{\circ n}_{y_i,y_j}$ for all $0<i<j<k$ and $1\le n<\omega$.\end{claim}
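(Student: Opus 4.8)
The plan is to prove the statement by induction on $n$, in parallel with the proof of Claim~\ref{case1}, but with the hypothesis reversed and with the extra index $k$ carried along.

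For $n=1$ the inequality needs no hypothesis. Since $0<i$, Claim~\ref{monotone} gives $\im\lambda_{y_0}\ale\im\lambda_{y_i}$; applying the preimage operation $\lambda_{y_k}^{-1}[\,\cdot\,]$, which preserves $\ale$ by Lemma~\ref{add_basics}(3), we get $\im\lambda_{y_0,y_k}=\lambda_{y_k}^{-1}[\im\lambda_{y_0}]\ale\lambda_{y_k}^{-1}[\im\lambda_{y_i}]=\im\lambda_{y_i,y_k}$; and since $j<k$, Claim~\ref{preimages} gives $\im\lambda_{y_i,y_k}=\lambda_{y_k}^{-1}[\im\lambda_{y_i}]\ale\lambda_{y_j}^{-1}[\im\lambda_{y_i}]=\im\lambda_{y_i,y_j}$. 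Concatenating, $\im\lambda_{y_0,y_k}\ale\im\lambda_{y_i,y_j}$.

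For the inductive step, fix $0<i<j<k$ and assume the statement for $n$. By indiscernibility, the inductive hypothesis reads $\im\lambda_{y_a,y_d}\ale\im\lambda_{y_b,y_c}^{\circ n}$ whenever $a<b<c<d$, and the hypothesis of the claim reads $\im\lambda_{y_c,y_d}\ale\im\lambda_{y_a,y_b}$ whenever $a<b<c<d$. Unlike in Claim~\ref{case1}, one cannot express $\im\lambda_{y_0,y_k}$ as $\lambda_{y_i,y_j}[X]$, since the outer index of $\im\lambda_{y_0,y_k}$ is $k$, not $j$. Instead I would: (i) use the monotonicity moves of Claims~\ref{monotone} and~\ref{preimages} together with the composition identities $\lambda_{y_a,y_c}\equiv\lambda_{y_b,y_c}\circ\lambda_{y_a,y_b}$ (for $a<b<c$) to isolate a factor of $\lambda_{y_i,y_j}$ applied to an image of the form $\im\lambda_{y_0,y_\bullet}$; (ii) insert auxiliary indices interlaced with $0,i,j,k$, some of them pushed strictly below $i$, so that the reversed hypothesis can be applied to transfer an $\im\lambda_{y_0,y_\bullet}$ to some $\im\lambda_{y_\bullet,y_\bullet}$ whose index pair surrounds $(i,j)$; (iii) apply the inductive hypothesis there to obtain $\ale\im\lambda_{y_i,y_j}^{\circ n}$; and (iv) push this back through $\lambda_{y_i,y_j}[\,\cdot\,]$, which preserves $\ale$ among subgroups of $H$, reaching $\lambda_{y_i,y_j}[\im\lambda_{y_i,y_j}^{\circ n}]=\im\lambda_{y_i,y_j}^{\circ(n+1)}$. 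As in Claim~\ref{case1}, the definability requirements of Claims~\ref{monotone} and~\ref{preimages} are met because every group occurring is definable over $\{y_s,y'_s\}$ for the indices $s$ lying outside the relevant interval.

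The main obstacle is exactly this index bookkeeping. Because the left pair $(0,k)$ and the right pair $(i,j)$ no longer share an outer index, the single telescoping step of Claim~\ref{case1} must be unfolded into a short chain of moves, and the auxiliary indices have to be chosen so that every appeal to the hypothesis is to a genuinely separated pair of intervals and every appeal to the inductive hypothesis is to a pair of indices surrounding $(i,j)$. The three-index formulation, with $k>j$, is precisely what provides the room to arrange this.
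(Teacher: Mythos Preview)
Your base case $n=1$ is correct and matches the paper. The inductive step, however, has a genuine gap.

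The crux of the inductive step is this: having written $\im\lambda_{y_0,y_k}\aeq\lambda_{y_m,y_k}[\im\lambda_{y_0,y_m}]$ for some intermediate $m$, you must replace the \emph{outer} operator $\lambda_{y_m,y_k}$ by $\lambda_{y_i,y_j}$, applied to some subgroup $B\le H$ (ultimately $B=\im\lambda_{y_0,y_\ell}$ for suitable $\ell$). Your proposed toolbox---Claims~\ref{monotone} and~\ref{preimages}, the composition identity, and the hypothesis $\im\lambda_{y_c,y_d}\ale\im\lambda_{y_a,y_b}$---does not do this. Claims~\ref{monotone} and~\ref{preimages} let you slide \emph{one} index of $\lambda_{y_a,y_b}$ at a time, and in each case the resulting inequality goes the wrong way to chain into what you need: from $i<m$ you only get $\lambda_{y_i,y_k}[B]\ale\lambda_{y_m,y_k}[B]$, and from $j<k$ you only get $\lambda_{y_i,y_k}[B]\ale\lambda_{y_i,y_j}[B]$; neither yields $\lambda_{y_m,y_k}[B]\ale\lambda_{y_i,y_j}[B]$. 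The hypothesis concerns only \emph{full} images $\im\lambda_{y_a,y_b}$, not images of an arbitrary subgroup, so it cannot be invoked here either.

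What the paper does instead is bring in two further claims proved earlier: Claim~\ref{L-images} shows that $\lambda_{y_i,y_j}[B]$ and $\lambda_{y_m,y_k}[B]$ are $\ale$-comparable for disjoint index pairs $i<j<m<k$, and Claim~\ref{L-kernels} together with Lemma~\ref{kerim} determines the direction. Concretely: from the hypothesis one gets $\im\bar\lambda_{y_m,y_k}\ale\im\bar\lambda_{y_i,y_j}$ (bar denoting the induced map to $H/\ann_H(y_j)$); Claim~\ref{L-kernels} and Lemma~\ref{kerim} then force $\ker\bar\lambda_{y_i,y_j}\ale\ker\bar\lambda_{y_m,y_k}$; restricting to $B$ and applying Lemma~\ref{kerim} once more with Claim~\ref{L-images} gives $\lambda_{y_m,y_k}[B]\ale\lambda_{y_i,y_j}[B]$. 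Only then does the chain
\[
\im\lambda_{y_0,y_k}\ale\lambda_{y_m,y_k}[B]\ale\lambda_{y_i,y_j}[B]\ale\lambda_{y_i,y_j}[\im\lambda_{y_i,y_j}^{\circ n}]=\im\lambda_{y_i,y_j}^{\circ(n+1)}
\]
go through, the last $\ale$ coming from the inductive hypothesis applied to $0<i<j<\ell$. Your outline omits exactly this kernel--image swap, and without it the argument does not close.
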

\bpfc The case $n=1$ follows from Claims \ref{monotone} and \ref{preimages}, so assume the statement holds
for some $n$. Choose $0<i<j<\ell<m<k$. Let $\bar\lambda_{y,y'}$ be the quasi-homomorphism from 
$H$ to $H/\ann_H(y_j)$ induced by $\lambda_{y,y'}$. 
By the assumption and indiscernibility we have $\im\bar{\lambda}_{y_m,y_k}\ale\im\bar{\lambda}_{y_i,y_j}$.
Hence, Claim 
\ref{L-kernels}  and Lemma \ref{kerim} imply $\ker\bar\lambda_{y_i,y_j}\ale\ker\bar\lambda_{y_m,y_k}$,
so the same holds for
the restrictions to $B:=\im\lambda_{y_0,y_\ell}$. But now by Lemma \ref{kerim} and Claim \ref{L-images} we have $\lambda_{y_m,y_k}[B]\ale\lambda_{y_i,y_j}[B]$. Then
$$\begin{aligned}\im\lambda_{y_0,y_k}&\aeq\im(\lambda_{y_m,y_k}\circ\lambda_{y_0,y_m})=
\lambda_{y_m,y_k}[\im\lambda_{y_0,y_\ell}]\ale\lambda_{y_i,y_j}[\im\lambda_{y_0,y_\ell}]\\
&\ale\lambda_{y_i,y_j}[\im\lambda_{y_i,y_j}^{\circ n}]=\im(\lambda_{y_i,y_j}^{\circ n}\circ\lambda_{y_i,y_j})=\im\lambda_{y_i,y_j}^{\circ (n+1)}.\qedhere\end{aligned}$$\epf

\begin{claim}\label{ker} $\im\lambda_{y_i,y_k}\ale\ann_H(y_j)$ for all $i,j<k$.
\end{claim}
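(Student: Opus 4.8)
The plan is to reduce Claim~\ref{ker} to the nilpotency dichotomy of Claim~\ref{nilpotent}, using the ``telescoping'' inclusions of Claims~\ref{case1} and~\ref{case2} to absorb the iterated compositions appearing there.

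First I would show that for $i<j$ one has $\im\lambda_{y_i,y_j}^{\circ n}\ale\ann_H(y_j)$ for some $n$. By Claim~\ref{monotone}, $\ann_H(y_j)\ale\ann_H(y_i)$ and $\im\lambda_{y_i}\ale\im\lambda_{y_j}$, so by Lemma~\ref{quasi-end} the relation $\lambda_{y_i,y_j}$ induces a definable quasi-endomorphism $\bar\lambda_{y_i,y_j}$ of $H/\ann_H(y_j)$ with kernel isomorphic to $\ann_H(y_i)/(\ann_H(y_i)\cap\ann_H(y_j))$. Since $\ann_H(y_i)$ and $\ann_H(y_j)$ are not commensurable by Claim~\ref{noteq} while $\ann_H(y_j)\ale\ann_H(y_i)$, that kernel is infinite; hence $\bar\lambda_{y_i,y_j}$ is not invertible, so by Claim~\ref{nilpotent} it is nilpotent, i.e.\ $(\bar\lambda_{y_i,y_j})^{\circ n}\equiv 0$ in the quasi-endomorphism ring of $H/\ann_H(y_j)$ (Lemma~\ref{endom}) for some $n$. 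As $(\bar\lambda_{y_i,y_j})^{\circ n}$ is the quasi-endomorphism induced by $\lambda_{y_i,y_j}^{\circ n}$ up to $\equiv$, this means $\im\lambda_{y_i,y_j}^{\circ n}$ has finite image in $H/\ann_H(y_j)$, i.e.\ $\im\lambda_{y_i,y_j}^{\circ n}\ale\ann_H(y_j)$.

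Next, by Claim~\ref{L-images} with $B=H$ and indices $0<1<2<3$, the groups $\im\lambda_{y_0,y_1}$ and $\im\lambda_{y_2,y_3}$ are $\ale$-comparable, so the hypothesis of Claim~\ref{case1} or of Claim~\ref{case2} holds. If $\im\lambda_{y_0,y_1}\ale\im\lambda_{y_2,y_3}$, then Claim~\ref{case1} gives $\im\lambda_{y_0,y_k}\ale\im\lambda_{y_i,y_k}^{\circ n}$ for all $0<i<k$ and all $n$; feeding in the $n$ from the previous step for the pair $i<k$ yields $\im\lambda_{y_0,y_k}\ale\ann_H(y_k)$, and by $\emptyset$-indiscernibility of $\langle(y_s,y'_s):s\in\Q\rangle$ we get $\im\lambda_{y_i,y_k}\ale\ann_H(y_k)$ for all $i<k$. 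Then for any $j<k$ we conclude $\im\lambda_{y_i,y_k}\ale\ann_H(y_k)\ale\ann_H(y_j)$ by Claim~\ref{monotone} and transitivity of $\ale$, which is Claim~\ref{ker}. If instead $\im\lambda_{y_2,y_3}\ale\im\lambda_{y_0,y_1}$, then Claim~\ref{case2} gives $\im\lambda_{y_0,y_k}\ale\im\lambda_{y_i,y_j}^{\circ n}$ for $0<i<j<k$ and all $n$; feeding in the $n$ from the previous step for $i<j$ and using indiscernibility yields $\im\lambda_{y_i,y_k}\ale\ann_H(y_j)$ for all $i<j<k$. Given $i<k$, choosing $j$ with $i<j<k$ and applying Claim~\ref{monotone} then also gives $\im\lambda_{y_i,y_k}\ale\ann_H(y_j)\ale\ann_H(y_i)$, hence $\im\lambda_{y_i,y_k}\ale\ann_H(y_{j'})$ whenever $j'\le i$; together with the case $i<j'<k$ this yields Claim~\ref{ker} for all $i,j'<k$.

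The step I expect to be the main obstacle is the last assertion of the nilpotency step: identifying $(\bar\lambda_{y_i,y_j})^{\circ n}$ --- where the nilpotency lives --- with the quasi-endomorphism induced by the iterated additive relation $\lambda_{y_i,y_j}^{\circ n}$ that Claims~\ref{case1} and~\ref{case2} are phrased in terms of; that is, verifying that passing to the induced map commutes with composition of additive relations modulo the equivalence $\equiv$ (finite kernels and cokernels), so that $(\bar\lambda_{y_i,y_j})^{\circ n}\equiv 0$ really does express $\im\lambda_{y_i,y_j}^{\circ n}\ale\ann_H(y_j)$. The remainder is routine manipulation of $\ale$ together with indiscernibility of the sequence.
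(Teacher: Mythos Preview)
Your proposal is correct and follows the same route as the paper: use Claim~\ref{noteq} to see that $\bar\lambda_{y_i,y_j}$ has infinite kernel, hence is nilpotent by Claim~\ref{nilpotent}, and then feed the resulting bound $\im\lambda_{y_i,y_j}^{\circ n}\ale\ann_H(y_j)$ into Claim~\ref{case1} or Claim~\ref{case2} according to the dichotomy of Claim~\ref{L-images}, finishing with Claim~\ref{monotone} and indiscernibility. The paper's proof is just the one-line version of this (``the assertion now follows from Claims~\ref{L-images}, \ref{case1} and~\ref{case2}''), and the compatibility of composition with passing to $H/\ann_H(y_j)$ that you flag as the main obstacle is indeed routine, since $\ann_H(y_j)=\coker\lambda_{y_i,y_j}$ so the induced map and iterated composition agree up to $\equiv$.
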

\bpfc By Claim \ref{noteq}, the quasi-endomorphism $\bar\lambda_{y_i,y_j}$ of $H/\ann_H(y_j)$ induced by $\lambda_{y_i,y_j}$ is not invertible, so it must
be nilpotent by Claim \ref{nilpotent}. The assertion now follows from Claims \ref{L-images}, \ref{case1} and \ref{case2}.\epf

Of course, all of the previous claims also hold with the roles of $G$ and $H$ exchanged.

\begin{claim}\label{comparable} For any $i\not=j$ we have $\im\lambda_{y_i}\ale\im\lambda'_{y'_j}$ or  $\im\lambda'_{y_j}\ale\im\lambda_{y_i}$.
\end{claim}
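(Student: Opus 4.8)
The plan is to argue by contradiction, following the template of Claims \ref{images}, \ref{kernels} and \ref{L-images}. Suppose that $\im\lambda_{y_i}$ and $\im\lambda'_{y'_j}$ are $\ale$-incomparable for some $i\neq j$. The case $i>j$ is the instance of the statement obtained by exchanging the roles of $G$ and $H$, under which every earlier claim remains valid; so it suffices to treat $i<j$, and by indiscernibility of the sequence $\langle(y_k,y'_k):k\in\Q\rangle$ we may then assume that $\im\lambda_{y_i}$ and $\im\lambda'_{y'_j}$ are incomparable for \emph{every} pair $i<j$. Set $C=\im\lambda_{y_i}+\im\lambda'_{y'_j}\le K$ and $D=\im\lambda_{y_i}\cap\im\lambda'_{y'_j}$; since $D$ is contained in both summands,
$$C/D=(\im\lambda_{y_i}/D)\oplus(\im\lambda'_{y'_j}/D)$$
is an internal direct sum of two infinite groups. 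As $C/D$ is a subquotient of $K$ and burden is monotone under subquotients, Remark \ref{bdn_product} gives the strict inequality
$$\bdn(C/\im\lambda'_{y'_j})=\bdn(\im\lambda_{y_i}/D)<\bdn(\im\lambda_{y_i}/D)+\bdn(\im\lambda'_{y'_j}/D)\le\bdn(C/D)\le\bdn(K).$$
This drop in the target burden is the heart of the argument, and it is essential that it comes from the direct-sum decomposition rather than from any sub-additivity of burden over group extensions (which is not available).

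I would then introduce the induced quotient quasi-form $\bar\lambda:G\times H\to K/C$ and the $\{y_i,y'_j\}$-definable subgroups $A=\aann^{\bar\lambda}_G(H)=\{g\in G:\im\lambda_g\ale C\}$ and $B=\aann^{\bar\lambda}_H(A)=\{h\in H:\lambda'_h[A]\ale C\}$. Proposition \ref{symmetry} applied to the pair $(A,H)$ gives $H\ale B$, so $B$ has finite index in $H$; and by the very choice of $A$ and $B$ the map $\hat\lambda:A\times B\to C/\im\lambda'_{y'_j}$ obtained by restricting $\lambda$ to $(A\times B)\cap\lambda^{-1}[C]$ and composing with $C\to C/\im\lambda'_{y'_j}$ is a bilinear quasi-form. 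Hence the first Claim above yields $\rbdn_{\hat\lambda}(A)\le\rbdn_\lambda(G)$ and $\rbdn_{\hat\lambda}(B)\le\rbdn_\lambda(H)$, and together with $\bdn(C/\im\lambda'_{y'_j})<\bdn(K)$ this makes the complexity of $\hat\lambda$ strictly below that of $\lambda$; by minimality of the counter-example, $\hat\lambda$ is virtually almost trivial. Corollary \ref{aann} then makes $A_0:=\aann^{\hat\lambda}_A(B)$ a finite-index, $\{y_i,y'_j\}$-definable subgroup of $A$, and unwinding the definitions shows that $\lambda_g[B]\ale\im\lambda'_{y'_j}$ for every $g\in A_0$, which — since $B$ has finite index in $H$ — upgrades to $\im\lambda_g\ale\im\lambda'_{y'_j}$.

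Finally I would fix an index $s<i$. By Claim \ref{monotone}, $\im\lambda_{y_s}\ale\im\lambda_{y_i}\le C$, so $y_s\in A$; and since $s\notin\{i,j\}$, the groups $A$ and $A_0$ are definable over sequence elements at positions different from $s$, so principal indiscernibility of $\langle(y_k,y'_k):k\in\Q\rangle$ forces $y_s$ into every finite-index subgroup of $A$ that is definable over those elements — in particular into $A_0$. Hence $\im\lambda_{y_s}\ale\im\lambda'_{y'_j}$; but $s<j$, which contradicts the assumed incomparability of $\im\lambda_{y_s}$ and $\im\lambda'_{y'_j}$. I expect the delicate point to be the bookkeeping in the middle paragraph: ensuring that $A_0$ is definable over exactly $\{y_i,y'_j\}$ (so that an index $s$ strictly below $i$ — which by Claim \ref{monotone} still satisfies $y_s\in A$ — can be pinned inside it by principal indiscernibility), and confirming that the target burden really decreases through the direct-sum decomposition of $C/D$.
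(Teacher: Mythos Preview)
Your proof is correct and follows the same strategy as the paper: obtain a strict burden drop in the target via the direct-sum decomposition, apply minimality of the counter-example to the induced quasi-form, and use principal indiscernibility to place some $y_s$ with $s<i$ in the resulting almost annihilator, yielding the contradiction. The paper's version is a bit more streamlined --- it sets $A=\{g\in G:\im\lambda_g\ale\im\lambda_{y_i}\}$ and works directly with the quasi-form $A\times H\to\im\lambda_{y_i}/(\im\lambda_{y_i}\cap\im\lambda'_{y'_j})$ without introducing your $C$ or the auxiliary subgroup $B$, and treats $i<j$ and $i>j$ uniformly by picking $k<i$ with $k\neq j$ and $y_k\equiv_{y'_j}y_i$ --- but the core argument is identical.
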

\bpfc Suppose not. Put $A=\{g\in G:\im\lambda_g\ale\im\lambda_{y_i}\}$, and consider the induced bilinear quasi-form
$$\bar\lambda:A\times H\to\im\lambda_{y_i}/(\im\lambda_{y_i}\cap\im\lambda'_{y'_j}).$$
As $\bdn(\im\lambda_{y_i}/(\im\lambda_{y_i}\cap\im\lambda'_{y'_j}))<\bdn(K)$, the quasi-form $\bar\lambda$ must be virtually almost trivial. But $y_k\in A^0_{y_i,y'_j}$ for $j\not=k<i$ by Claim \ref{monotone} and principal indiscernibility. Hence $\im\lambda_{y_k}\ale\im\lambda'_{y'_j}$, a contradiction, as there is $j\not=k<i$ with $y_k\equiv_{y'_j}y_i$.\epf

By Claim \ref{comparable} and symmetry we may assume that $\im\lambda'_{y'_i}\ale\im\lambda_{y_j}$ for all $i<j$.
Fix $i$ and $k$, and choose $i<j<\ell$ and $k<\ell<1$ with $k\notin\{i,j\}$. By Claim \ref{ker} we get:
$$\im\lambda_{y_j}\ale\lambda_{y_\ell}[\ann_H(y_k)].$$
Moreover, $\lambda_{y_\ell}[\ann_H(y_k)] \ale\lambda_{y_1}[\ann_H(y_k)]$  by Claim \ref{monotone}.
Then
$$\lambda(y_1,y'_i)\in(\im\lambda'_{y'_i})^0_{y'_i,y_j,y_k,y_\ell}\le(\im
\lambda_{y_j})^0_{y'_i,y_j,y_k,y_\ell}\le \lambda_{y_\ell}[\ann_H(y_k)].$$
Hence,
$$y'_i\in \lambda_{y_1}^{-1}[\lambda_{y_\ell}[\ann_H(y_k)]^0_{y_k,y_\ell,y_1}]\leq 
\lambda_{y_1}^{-1}[\lambda_{y_1}[\ann_H(y_k)]].$$ 
Thus,
$$y'_i\in(\ann_H(y_k)+\ann_H(y_1))^0_{y_k,y_1}\le\ann_H(y_k),$$
and $\lambda(y_k,y'_i)=0$. As $\langle (y_i,y'_i):i<\omega \rangle$ is a subgroup-generic sequence, $\ann_G(y'_0)$ has finite index in $G$ and $\ann_H(y_0)$ has finite index in $H$. Since $(y_0,y'_0)$ is subgroup-generic, $\aann_H(G)$ and $\aann_G(H)$ have finite index in $H$ and $G$ respectively. So $\lambda$ is virtually almost trivial by Corollary \ref{aann}.

Finally, if $G$ and $H$ are connected, then $\lambda$ is almost trivial. But then for every $g\in G$ and $h\in H$ the annihilators $\ann_H(g)$ and $\ann_G(h)$ have finite index in $H$ and in $G$, 
and must be equal to $H$ and $G$, respectively, by connectivity. Thus $\lambda$ is trivial.
\epf

\section{On groups and rings}\label{main_results}
Recall that each countable, $\omega$-categorical group has a finite series of characteristic (i.e.\ invariant under
the automorphism group) subgroups in which all successive quotients are characteristically simple groups 
(i.e.\ they do not have non-trivial, proper characteristic subgroups). On the other hand, Wilson \cite{W} proved (see also \cite{A} for an exposition of the proof):

\begin{fact}\label{Wilson}
For each infinite, countable, $\omega$-categorical, characteristically simple group $H$, one of the following holds.\begin{enumerate}
\item[(i)] For some prime number $p$, $H$ is an elementary abelian $p$-group
(i.e.\ an abelian group, in which every nontrivial element has order $p$). 
\item[(ii)] $H \cong B(F)$ or $H \cong B^-(F)$ for some non-abelian, finite, simple group $F$,
where $B(F)$ is the group of all continuous functions from the Cantor space $C$ to $F$, and 
$B^-(F)$ is the subgroup of $B(F)$ consisting of the functions $f$ such that $f(x_0)=e$ for a fixed 
element $x_0 \in C$.
\item[(iii)] $H$ is a perfect $p$-group (perfect means that $H$ equals its commutator subgroup).
\end{enumerate}
\end{fact}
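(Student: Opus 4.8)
This statement is Wilson's classification of infinite $\omega$-categorical characteristically simple groups \cite{W}; in the paper we simply invoke it, but let me indicate the line of argument one would follow (an exposition is given in \cite{A}). The structural input is that, by the Ryll--Nardzewski theorem, $\omega$-categoricity forces $H$ to be uniformly locally finite. The first step is purely group-theoretic: the commutator subgroup $[H,H]$ is characteristic, so characteristic simplicity yields $[H,H]=1$ or $[H,H]=H$. Hence $H$ is abelian or perfect, and the three cases of the conclusion are obtained by refining this dichotomy.

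If $H$ is abelian, one classifies the characteristically simple abelian groups: they are the $\mathbb{Q}$-vector spaces and the $\mathbb{F}_p$-vector spaces for a single prime $p$ (the Prüfer groups $\mathbb{Z}(p^\infty)$ being excluded, since their finite subgroups are characteristic). More directly, in a locally finite abelian group the sets $\{x:nx=0\}$ are characteristic subgroups, so a single exponent $p$ occurs; as $H$ is infinite it is an infinite-dimensional $\mathbb{F}_p$-vector space, which is case (i).

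The substance lies in the perfect case, and here I would split according to whether $H$ is a $p$-group (every element of $p$-power order) for some prime $p$. If it is, we land in case (iii), and nothing sharper holds — this is exactly the configuration Wilson conjectures to be vacuous; note that, $H$ being infinite and locally finite, its centre may be trivial, so no elementary contradiction is available. If $H$ is not a $p$-group, the target is to recognise it as $B(F)$ or $B^-(F)$. One first shows $H$ has no nontrivial abelian normal subgroup: otherwise, concentrating on the elements of $q$-power order in such a subgroup and exploiting perfectness together with the homogeneity supplied by $\omega$-categoricity, one forces $H$ to be a $q$-group. Analysing then the socle and chief-factor structure of the locally finite group $H$, all relevant simple sections turn out to be isomorphic to a single non-abelian simple group $F$, and $\omega$-categoricity (finiteness of the number of $n$-types, hence uniform local finiteness) forces $F$ to be finite. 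Finally one runs a back-and-forth argument: the Boolean algebra of $\emptyset$-definable normal ``supports'' is atomless, so its Stone space is the Cantor space $C$, and tracking the action of $H$ identifies it with the group $B(F)$ of continuous maps $C\to F$, or with the stabiliser $B^-(F)$ of a point — cases (ii).

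The main obstacle is precisely this last identification. Everything through the reduction ``perfect, not a $p$-group $\Rightarrow$ built from a fixed finite non-abelian simple group'' is comparatively soft, using only that characteristic subgroups are trivial or everything, plus uniform local finiteness. Extracting the Cantor space from the lattice of definable subgroups and distinguishing $B(F)$ from $B^-(F)$ is where the real work happens — a careful back-and-forth realising $H$ as a direct limit of powers $F^{2^n}$ with the correct transition maps. Since we only use the statement here, we do not reproduce the proof; see \cite{W,A}.
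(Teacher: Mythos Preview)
The paper does not give a proof of this statement: it is recorded as a \textbf{Fact} and attributed to Wilson \cite{W}, with Apps \cite{A} cited for an exposition. You correctly identify this and, going beyond what the paper does, supply a sketch of the argument. Your outline has the right architecture --- the dichotomy abelian/perfect via the characteristic subgroup $[H,H]$, the easy identification of the abelian case with elementary abelian $p$-groups via local finiteness, and the hard perfect non-$p$-group case leading to the Boolean-algebra/Stone-space analysis and back-and-forth --- and this is indeed the shape of Wilson's proof as presented in \cite{A}. Since there is no proof in the paper to compare against, nothing more is required; your acknowledgement that the identification with $B(F)$ or $B^-(F)$ is the substantive step, and your deferral to \cite{W,A} for it, matches exactly how the paper handles the result.
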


It remains a difficult open question whether there exist infinite, 
$\omega$-categorical, perfect $p$-groups.

\begin{remark}\label{B(F)}
 The groups $B(F)$ and $B^-(F)$ above have TP$_2$ (in particular, they do not have finite burden).
\end{remark}
\begin{proof}
 Let $f\in F$ be a non-central element, and let $\langle D_i:i<\omega\rangle$ be pairwise 
 disjoint clopen sets in $C$. Let $g_i\in B(F)$ be given by $g_i[A_i]=\{f\}$ and $g_i[C\backslash A_i]=\{0\}$ for each $i$.
 Then the centralizers of the $g_i$ do not satisfy the conclusion of \cite[Theorem 2.4]{CKS}, hence $B(F)$ has TP$_2$.
 The argument for $B^-(F)$ is analogous.
\end{proof}

\begin{fact}[{\cite[Theorem 3.1]{N}}]\label{derived}
There is a finite bound of the size of conjugacy classes in a group
$G$ if and only if the derived subgroup $G'$ is finite.
\end{fact}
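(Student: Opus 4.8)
The plan is to prove the two implications separately: the forward direction is a one‑line coset count, while the converse is the substance of Neumann's theorem and is where the work lies.

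For the direction $|G'|<\infty\Rightarrow$ bounded conjugacy classes: for all $g,h\in G$ we have $g^h=g\cdot[g,h]$ with $[g,h]\in G'$, so $g^G\subseteq gG'$ and hence $|g^G|\le |G'|$; thus $|G'|$ itself is a uniform bound on conjugacy class sizes. For the converse, assume every conjugacy class of $G$ has size at most $d$, and first reduce to the finitely generated case. Every finitely generated subgroup $H\le G$ again has all conjugacy classes of size at most $d$, since $x^H\subseteq x^G$. Granting for the moment a bound $|H'|\le f(d)$ depending only on $d$ and valid for all finitely generated $H\le G$, one finishes as follows: choose a finitely generated $H_0\le G$ with $|H_0'|$ maximal (possible, as these orders are all $\le f(d)$); then for any commutator $[a,b]$ of $G$ the group $\langle H_0,a,b\rangle$ is finitely generated and contains $H_0$, so its derived subgroup both contains $H_0'$ and has order at most $|H_0'|$, hence equals $H_0'$ and contains $[a,b]$. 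Therefore $G'\subseteq H_0'$, so $G'=H_0'$ is finite. It thus suffices to prove the bound $|H'|\le f(d)$ for finitely generated BFC groups.

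For a finitely generated $G=\langle x_1,\dots,x_r\rangle$ with all conjugacy classes of size $\le d$, the centre $Z(G)=\bigcap_{i\le r}C_G(x_i)$ is an intersection of $r$ subgroups of index $\le d$, hence has index $\le d^r$ in $G$; since $G/Z(G)$ is finite, Schur's theorem gives that $G'$ is finite. This already settles the qualitative statement in the finitely generated case, but only with a bound on $|G'|$ depending on $|G/Z(G)|$, and hence on the number $r$ of generators.

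The hard part — and the real content of Neumann's argument, which I would follow for this step — is to upgrade this to a bound $f(d)$ depending on $d$ alone; one cannot simply appeal to Schur's theorem, since $|G/Z(G)|$ is genuinely not controlled by $d$ (an extraspecial $p$-group of order $p^{2n+1}$ has all conjugacy classes of size $\le p$ while $|G/Z(G)|=p^{2n}$). Instead one argues combinatorially with the boundedly many, boundedly indexed centralizers $C_G(x)$ to bound $|G'|$ directly; this uniform bound then feeds back into the reduction above and completes the proof.
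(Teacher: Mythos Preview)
The paper does not prove this statement at all: it is stated as a \textbf{Fact} with a bare citation to Neumann's 1954 paper and is used as a black box thereafter. There is therefore no ``paper's own proof'' to compare against.

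As for your outline itself: the easy direction is complete and correct, and your reduction of the converse to a uniform bound $|H'|\le f(d)$ for finitely generated $H$ is also correct (the maximal-$|H_0'|$ argument is clean). But, as you yourself flag, you do not actually establish the uniform bound $f(d)$; you observe that Schur's theorem gives only a generator-dependent bound, correctly point out via the extraspecial example why bounding $|G/Z(G)|$ cannot work, and then defer the real combinatorial step back to Neumann. So what you have written is an honest structural sketch rather than a self-contained proof --- which is entirely in keeping with how the paper treats the result, but you should be aware that the substantive content of the theorem is precisely the step you have left out.
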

This implies in particular that if the almost centre $\Z(G)$ of a group $G$ is definable, then it is finite-by-abelian.

\br\label{definability}
 If a group $G$ is virtually finite-by-abelian, then there is a characteristic definable finite-by-abelian subgroup 
 $G_0\leq G$ of finite index; if a ring $R$ is virtually finite-by-null, there is a definable subring $R_0$ which is finite-by-null.
\er
\bpf Let $G$ be virtually finite-by-abelian. Then $\Z(G)$ is characteristic and definable of finite index (this does not even need $\omega$-categoricity), and finite-by-abelian.
 
If $R$ is virtually finite-by-null, let $S_0$ be a finite-by-null subring of finite (additive) index, and $I$ a finite ideal of $S_0$ containing $S_0\cdot S_0$. Then $S:=\bigcap_{s\in S_0}\{r\in R:rs\in I\}$ contains $S_0$ and must be a definable subgroup of finite index, with $S\cdot S_0\subseteq I$. Now $R_0:=S\cap\bigcap_{s\in S}\{r\in R:sr\in I\}$ contains $S_0$ and is again a definable subgroup of finite index. Since $R_0\cdot R_0\subseteq I\le R_0$, this is
a required subring.
\epf

We will use the following variant of Proposition 2.5 from \cite{KLS}.
As in our context we cannot use connected components, we have to modify the proof slightly.
\bl\label{criterion}
Let $\mathcal{C}$ be a class of countable, $\omega$-categorical NTP$_2$ (pure) groups, closed under taking definable subgroups
and quotients by definable normal subgroups. Suppose that
every infinite, characteristically simple group in $\mathcal{C}$ is solvable. Then every group in $\mathcal{C}$ is nilpotent-by finite.
\el
\bpf
 Let $G\in \mathcal{C}$. Let $\{0\}=G_0\leq G_1\leq\dots\leq G_n=G$ be a chain of characteristic subgroups of $G$ of maximal 
 length. We will show the assertion by induction $n$. Let $i$ be maximal such hat $G_i$ is finite. Then $C_G(G_i)$ is a characteristic subgroup of $G$ of finite index, so we can replace $G$ by $C_G(G_i)/G_i$ without increasing $n$. We can thus assume that $G_1$ is infinite.
 Now, as $G_1$ is characteristically simple, it is solvable by the assumption. By the inductive hypothesis, $G/G_1$ is virtually nilpotent, 
 so there is a normal definable subgroup $N$ of $G$ of finite index such that $N/G_1$ is nilpotent, so $N$ is solvable. Since $N$ is NTP$_2$, it does not interpret the atomless boolean algebra, so by \cite[Theorem 1.2]{AM} it is virtually nilpotent, and so is $G$.
\epf

\bp\label{soluble}
A nilpotent $\omega$-categorical group of finite burden is virtually finite-by-abelian.
\ep
\bpf 
Let $G$ be a counter-example; we may assume it is nilpotent of minimal class possible. 
Then $Z(G)$ is infinite, and $G/Z(G)$ is virtually finite-by-abelian. By Remark \ref{definability} there is a 
definable subgroup $G_0$ of finite index and a finite normal subgroup $F/Z(G)$ of $G_0/Z(G)$ such that $G_0/F$ is abelian. Clearly we may replace $G$ by $C_{G_0}(F/Z(G))$, a definable subgroup of finite index. Then $G'\le F$ so 
$G'/Z(G)$ is central in $G/Z(G)$ ($*$), and $F\ale Z(G)$. Thus $F'$ is finite by Fact \ref{derived}; we may assume it is trivial.
Replacing $G$ by a definable subgroup of finite index, we may assume that the index $|G':G'\cap Z(G)|$ is not greater than 
$|G_0':G_0'\cap Z(G)|$ for any definable $G_0\leq G$ of finite index $(\dag)$.

Consider $g\in G$. By $(*)$, the map $x\mapsto [g,x]Z(G)$ is a definable homomorphism from $G$ to $G'/Z(G)$; its kernel $H$ must have finite index. Then $x\mapsto[g,x]$ is a definable homomorphism from $H$ to $Z(G)$ with abelian image;
its kernel must hence contain $H'Z(G)$. As $H'Z(G)=G'Z(G)$ by $(\dag)$, we see that $G'\le C_G(g)$. This holds for all $g\in G$, so $G'\le Z(G)$.

Now commutation is a definable bilinear form from $G/Z(G)$ to $Z(G)$. By Theorem \ref{trivial_by_finite} it is virtually almost trivial. 
But this means that $G$ is virtually finite-by-abelian, contradicting our assumption.\epf

\bp\label{group-nilpotent}
An $\omega$-categorical group of finite burden is nilpotent-by finite.
\ep
\bpf\setcounter{claim}{0}
If the proposition does not hold, there is a non-soluble $\omega$-categorical characteristically simple group $G$ by 
Lemma \ref{criterion}; it must be a perfect $p$-group for some prime $p$ by Fact \ref{Wilson} 
and Remark \ref{B(F)}.
We choose such a $G$ of minimal possible burden $k$. Note that $\Z(G)$ is trivial, as it is characteristic and 
finite-by-abelian (so soluble, as it is a $p$-group). Hence there are no finite normal subgroups, and all non-trivial conjugacy classes are infinite. 
\begin{claim} The soluble radical $R(G)$ of $G$ is trivial.\end{claim}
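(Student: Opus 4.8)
The plan is to exploit minimality of the burden $k$ together with the structure theory of characteristically simple $\omega$-categorical groups. Since $G$ is characteristically simple and the soluble radical $R(G)$ is a characteristic subgroup, the only subtlety is that $R(G)$ need not be definable; but in an $\omega$-categorical group the soluble radical is in fact a definable (indeed $\emptyset$-definable) normal subgroup, because solubility of a given class is a definable condition by uniform local finiteness and there are only finitely many $\emptyset$-definable normal subgroups. Hence $R(G)$ is a definable characteristic subgroup of $G$, so by characteristic simplicity $R(G)=\{0\}$ or $R(G)=G$. The latter is impossible since $G$ is a non-soluble perfect $p$-group, so $R(G)=\{0\}$, as desired.

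\smallskip

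Alternatively, and more in the spirit of the surrounding argument, one can argue without invoking definability of the radical directly. First observe $R(G)$ is a normal soluble subgroup. If it were nontrivial, then since $G$ is characteristically simple with no proper nontrivial characteristic subgroups, and $R(G)$ is characteristic, we would get $R(G)=G$, contradicting non-solubility of $G$. The only gap is the implicit claim that $R(G)$ is well-defined, i.e.\ that the subgroup generated by all normal soluble subgroups is itself soluble: this follows because in an $\omega$-categorical group the normal soluble subgroups have bounded derived length (again by uniform local finiteness — a group on $n$ generators has size at most $f(n)$, so its derived length is bounded), hence the product of all of them is soluble and normal, i.e.\ it is the largest normal soluble subgroup and is characteristic.

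\smallskip

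The main obstacle, and the only place where care is genuinely needed, is the \emph{definability} (or at least the well-definedness and characteristicity) of $R(G)$: without $\omega$-categoricity the soluble radical of an arbitrary group need not be soluble. Here $\omega$-categoricity rescues us via the uniform bound on the size of finitely generated subgroups, which caps the derived length of any soluble subgroup; this makes the join of all normal soluble subgroups soluble, so $R(G)$ is the unique maximal normal soluble subgroup and is therefore characteristic. Once that is in hand, the conclusion $R(G)=\{0\}$ is immediate from characteristic simplicity and the fact that $G$ is not soluble.
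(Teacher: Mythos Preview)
Your argument has a genuine gap at its crux. You correctly observe that the subgroup generated by all normal soluble subgroups is characteristic, so by characteristic simplicity it is either trivial or all of $G$. The problem is the inference ``$R(G)=G$ is impossible since $G$ is non-soluble'': this only follows if $R(G)$ is itself soluble, i.e.\ if a largest normal soluble subgroup exists. Your justification for this---that uniform local finiteness bounds the derived length of normal soluble subgroups---does not work. Uniform local finiteness gives a bound $f(n)$ on the size of $n$-generated subgroups, hence a bound $\log_2 f(n)$ on their derived length, but this bound depends on $n$; a normal soluble subgroup need not be finitely generated, so no uniform bound on derived length follows. Nothing you have said rules out that $G$ is generated by an $\mathrm{Aut}(G)$-invariant family of normal abelian subgroups whose finite products have unbounded derived length, in which case $R(G)=G$ while $G$ remains a non-soluble perfect $p$-group. (Indeed, if $R(G)$ is $\emptyset$-definable its derived series must stabilise, but it may stabilise at a nontrivial perfect characteristic subgroup, which by characteristic simplicity is $G$ itself---so you learn only that $G$ is perfect, which you already knew.)

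The paper's proof addresses exactly this difficulty, and does so using the finite burden hypothesis in an essential way. Starting from any nontrivial normal abelian subgroup (which exists once $R(G)\ne 1$), it considers the invariant family $\mathfrak A$ of abelian normal subgroups of the form $\langle a^G\rangle$. Finite products of members of $\mathfrak A$ are nilpotent, hence virtually finite-by-abelian by Proposition~\ref{soluble} (this is where finite burden enters), and in fact virtually abelian since $G$ has no nontrivial finite normal subgroups. From this one extracts a new invariant family $\mathfrak A'$ of pairwise \emph{commuting} abelian subgroups, whose union therefore generates a characteristic \emph{abelian} subgroup; by characteristic simplicity this must be all of $G$, contradicting non-abelianness. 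In other words, the work is precisely to upgrade ``normal abelian subgroup'' to ``characteristic abelian subgroup'', and this upgrade is not free.
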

\bpfc Suppose $R(G)$ is non-trivial. Then there is some non-trivial $a\in R(G)$ such that $a^G$ generates an infinite (definable) abelian normal subgroup $A_a$ of $G$. Let $\A=\{A_a:A_a \mbox{ abelian}\}$, a definable invariant  collection of definable abelian normal subgroups.

Any finite product $S$ of groups in $\A$ is nilpotent, whence virtually finite-by-abelian by Proposition \ref{soluble}. But then  $\Z(S)'$ is a finite characteristic subgroup of $S$, whence normal in $G$, and thus trivial. So $S$ is virtually abelian. It follows that for $A\in\A$ the almost centraliser $\C_G(A)$ almost contains $A'$ for all $A'\in\A$. Hence
$$A\ale\C_A(\C_G(A))\le\C_A(A')$$
(the first inequality follows from Fact \ref{Csymmetry}). But $[\C_A(A'),\C_{A'}(A)]$ is normal; aplying Proposition \ref{almost_trivial} to the bilinear form $(x,y)\mapsto[x,y]$ from $\C_A(A')\times \C_{A'}(A)$ to $A\cap A'$, we see that it is finite, whence trivial. Hence $\A'=\{\C_A(\C_G(A)):A\in\A\}$ is an invariant family of pairwise commuting abelian groups, and generates a characteristic abelian subgroup, which must be the whole of $G$. 
This contradiction finishes the proof of the claim.
\epf

Suppose every centraliser of a non-trivial element is soluble. Then by compactness there is a bound on the derived length of any proper centraliser.
As every finite subset of $G$ is contained in a centralizer of a nontrivial element, this would imply that $G$ is soluble, a contradiction. 
Hence there is $n\in G\setminus\{1\}$ such that $H:=C_G(n)$ is non-soluble. Put $N:=\langle n^G\rangle$, an infinite
normal subgroup, which is definable by $\omega$-categoricity.

Since $\C_G(N)\cap N$  is normal and finite-by-abelian (by Fact \ref{derived} and $\omega$-categoricity), 
whence soluble, it must be trivial.
\begin{claim}
 $\tilde{C}_G(N)=\{1\}$ .
\end{claim}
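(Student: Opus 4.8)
The plan is to show that $\tilde C_G(N)$, the almost centralizer of $N$ in $G$, is trivial by exploiting that it is a characteristic subgroup of $G$ whose intersection with $N$ we have just shown to be trivial. First I would note that $\tilde C_G(N)$ is definable by $\omega$-categoricity (it is the union of the $\emptyset$-definable sets $\{g : |N : C_N(g)| \le m\}$ for $m < \omega$, which stabilizes by Ryll-Nardzewski), and it is $\aut(G)$-invariant since $N = \langle n^G\rangle$ is characteristic. Hence $\tilde C_G(N)$ is a characteristic subgroup of $G$.

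The key step is then to argue that $\tilde C_G(N)$ commutes (at least almost, hence after passing to suitable definable finite-index subgroups) with $N$ in a way that forces $[\tilde C_G(N), N]$ to be a finite, hence trivial, normal subgroup. Concretely, I would apply Fact \ref{Csymmetry} to get $N \ale \tilde C_G(N^{\perp})$ — or more directly, consider the bilinear form (in the non-commutative sense, via commutation) and invoke Proposition \ref{almost_trivial}: since every $g \in \tilde C_G(N)$ satisfies $N \ale C_N(g)$, the index $|N : C_N(g)|$ is uniformly bounded, so by Fact \ref{derived} applied inside $N \rtimes \tilde C_G(N)$ (or rather to the relevant definable quotient), the commutator $[\tilde C_G(N), N]$ is finite. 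Being normal in $G$ (as both $N$ and $\tilde C_G(N)$ are characteristic), it is trivial; thus $\tilde C_G(N)$ genuinely centralizes $N$, so $\tilde C_G(N) \le C_G(N) = \tilde C_G(N) \cap \ldots$, and in particular $\tilde C_G(N) \cap N \le Z(N)$, which is trivial by the established triviality of $\Z(G)$ restricted appropriately. But $\tilde C_G(N)$ is characteristic in $G$; if it were nontrivial, being a normal subgroup disjoint from the infinite normal subgroup $N$ would contradict characteristic simplicity of $G$ — more precisely, $C_G(N)$ characteristic and nontrivial forces $C_G(N) = G$ (as $G$ is characteristically simple and $C_G(N) \ne \{1\}$ would have to exhaust $G$ unless $G$ has a proper nontrivial characteristic subgroup), making $N$ central, contradicting triviality of $\Z(G)$.

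Therefore $\tilde C_G(N) = \{1\}$, as desired. The main obstacle I anticipate is the passage from "almost centralizes" to "centralizes": one must be careful that $\tilde C_G(N)$ is honestly definable and characteristic before applying Fact \ref{derived}, and one should check that the finiteness of $[\tilde C_G(N), N]$ really does follow — this requires realizing the commutation as a definable map with uniformly bounded fibres over $N$ and then using that a normal subgroup with boundedly finite conjugacy classes has finite derived structure, all of which hinges on having set things up so that Fact \ref{derived} (via Neumann's theorem) applies. A secondary subtlety is ensuring that the final "characteristic and nontrivial implies everything" step is legitimate: it uses that $G$ is characteristically simple together with the already-established triviality of $\Z(G)$ to rule out $\tilde C_G(N) = G$.
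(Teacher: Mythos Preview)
Your approach has a genuine gap at the final step. You assert that $N=\langle n^G\rangle$ is characteristic, and hence that $\tilde C_G(N)$ and $C_G(N)$ are characteristic, so that characteristic simplicity of $G$ forces $C_G(N)\in\{\{1\},G\}$. But $n$ is a \emph{particular} element chosen so that $C_G(n)$ is non-soluble; its normal closure is normal in $G$, yet there is no reason it should be invariant under $\aut(G)$. An automorphism $\sigma$ sends $n$ to another element with non-soluble centraliser, but $\langle\sigma(n)^G\rangle$ may well differ from $N$. So neither $C_G(N)$ nor $\tilde C_G(N)$ is known to be characteristic, and the appeal to characteristic simplicity collapses. (Your alternative phrasing, that ``a normal subgroup disjoint from the infinite normal subgroup $N$ would contradict characteristic simplicity'', is also false on its own: characteristically simple groups can have many pairwise disjoint nontrivial normal subgroups --- think of an elementary abelian $p$-group.)

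The paper's argument is completely different and is where the finite-burden hypothesis and the minimality of $k$ actually enter. From the already-established $\tilde C_G(N)\cap N=\{1\}$ one gets a definable \emph{injection} $\tilde C_G(N)\times N\to G$ by multiplication (normality of $N$ suffices; no commutation needed). If $\tilde C_G(N)$ were nontrivial it would be infinite (no finite normal subgroups), so Remark~\ref{bdn_product} gives $\bdn(N)+\bdn(\tilde C_G(N))\le\bdn(G)=k$ and hence $\bdn(N)<k$. By the inductive hypothesis (minimality of $k$ via Lemma~\ref{criterion}), $N$ is nilpotent-by-finite, hence soluble as a $p$-group, so $N\le R(G)=\{1\}$ --- contradicting that $N$ is infinite. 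Note that your proposal never invokes burden or the minimal-counterexample hypothesis, which in this context is a warning sign.
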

\begin{proof}
 Suppose $\tilde{C}_G(N)$ is nontrivial, whence infinite. The map $\tilde{C}_G(N)\times N  \to G$ 
 given by multiplication is a definable injection, so $\bdn(G)\ge\bdn(N)+\bdn(\C_G(N))$ by Remark \ref{bdn_product}. As $\tilde{C}_G(N)$
 is infinite, $\bdn(\C_G(N))\ge 1$, and $\bdn(N)<\bdn(G)=k$. By inductive hypothesis
 $N$ is nilpotent-by-finite, whence solvable, a contradiction.
\end{proof}

\begin{claim}
 Any definable normal subgroup $M$ of $G$ with $M \ale H$ is trivial.
\end{claim}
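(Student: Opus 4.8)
The plan is to leverage the normality of $M$ together with the preceding claim that $\C_G(N)=\{1\}$. The first step is to observe that, since $M$ is normal and $M\ale H=C_G(n)$, conjugating by an arbitrary $g\in G$ gives $M=M^g\ale C_G(n)^g=C_G(n^g)$; equivalently, $C_M(n^g)=M\cap C_G(n^g)$ has finite index in $M$ for every $g\in G$, which is to say that every conjugate $n^g$ lies in the almost centraliser $\C_G(M)$.

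Next I would use that $\C_G(M)$ is a subgroup of $G$: if $g_1,g_2\in\C_G(M)$, then $C_M(g_1)\cap C_M(g_2)$ has finite index in $M$ and is contained in both $C_M(g_1g_2)$ and $C_M(g_1^{-1})$. Hence $\C_G(M)$ contains $N=\langle n^G\rangle$, so in particular $N\ale\C_G(M)$. Since $M$ and $N$ are definable ($N$ by $\omega$-categoricity), Fact~\ref{Csymmetry} applies and yields $M\ale\C_G(N)$.

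Finally, $\C_G(N)=\{1\}$ by the preceding claim, so $M\ale\{1\}$, which forces $M$ to be finite; being a finite normal subgroup of $G$, it must be trivial, as $G$ has no non-trivial finite normal subgroups. I do not anticipate a genuine obstacle here: the only points requiring a modicum of care are that $\C_G(M)$ really is a subgroup and that Fact~\ref{Csymmetry} is applicable in this situation, both of which are routine.
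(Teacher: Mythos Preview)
Your proposal is correct and follows essentially the same route as the paper: from $M\ale C_G(n)$ and normality of $M$ one obtains $n^G\subseteq\C_G(M)$, hence $N\le\C_G(M)$, and then Fact~\ref{Csymmetry} together with the preceding claim $\C_G(N)=\{1\}$ yields that $M$ is finite, whence trivial. The only differences are cosmetic: the paper phrases the first step as ``$n\in\C_G(M)$, and $\C_G(M)$ is normal since $M$ is'', whereas you conjugate the relation $M\ale C_G(n)$ directly; and you spell out that $\C_G(M)$ is a subgroup, which the paper takes for granted.
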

\begin{proof}
If $M\ale H=C_G(n)$, then $n\in\C_G(M)$; by normality of $M$ we get $n^G\subseteq
\C_G(M)$, and hence $N\leq\C_G(M)$. Then $M\ale\C_G(N)=\{1\}$ by Fact \ref{Csymmetry}.
\end{proof}
Consider a nontrivial definable normal subgroup $M$ of $G$. 
Since $M\cap H$ is normalized by $H$, we have a definable injection 
$$M/(M\cap H)\times H/(M\cap H)\to G/(M\cap H)$$
given by multiplication. As $M/M\cap H$ is infinite by the claim, we conclude that 
$$\bdn(H/M\cap H)<\bdn(G/(M\cap H))\le\bdn(G)=k.$$
By inductive hypothesis, $H/(M\cap H)$ is nilpotent-by-finite, whence soluble. If $M$ runs through the family $\mathcal{M}$ of 
$1$-generated normal subgroups, the family $\{H/(M\cap H):M\in\mathcal M\}$ is uniformly definable by $\omega$-categoricity, 
and by compactness there is $d<\omega$ such that $H/(M\cap H)$ has derived length at most $d$ for all $M\in\mathcal M$. 
But this means that $H^{(d)}$ is contained in $M$ for all $M\in\mathcal M$, and thus is contained in all nontrivial normal subgroups.

Since $H$ is not soluble, $H^{(d)}$ generates a non-abelian a minimal normal subgroup $L$. But then $L$ is finite by \cite[Theorem D]{A}, a contradiction.
This completes the proof.
\epf

\bt\label{groups}
An $\omega$-categorical group of finite burden is virtually finite-by-abelian.
\et
\bpf  This follows immediately from Propositions \ref{soluble} and \ref{group-nilpotent}.
\epf

\begin{corollary}\label{dp_min_gps}
An $\omega$-categorical NIP group of finite burden is virtually abelian. 
\end{corollary}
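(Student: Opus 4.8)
The plan is to bootstrap from Theorem \ref{groups}, which already gives that $G$ is virtually finite-by-abelian; the only thing left is to upgrade ``finite-by-abelian'' to ``abelian'' on a finite-index subgroup, using NIP. First I would apply Remark \ref{definability} to replace $G$ by a \emph{definable} finite-index subgroup $H\le G$ with $H'$ finite; this $H$ is again $\omega$-categorical, NIP and of finite burden, and its virtual abelianness yields that of $G$, so we may assume outright that $G'$ is finite.

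Now $G'$ finite forces all conjugacy classes to be uniformly small: for $g\in G$ we have $g^{x}=g[g,x]$, hence $g^{G}\subseteq gG'$ and $[G:C_{G}(g)]=|g^{G}|\le|G'|$. Thus $\langle C_{G}(g):g\in G\rangle$ is a uniformly definable family of subgroups of $G$, all of index at most $|G'|$. Since $G$ is NIP, the Baldwin--Saxl lemma (every intersection from a uniformly definable family of subgroups in an NIP group equals a subintersection of boundedly many of them) provides $g_{1},\dots,g_{n}\in G$ with $\bigcap_{g\in G}C_{G}(g)=\bigcap_{i=1}^{n}C_{G}(g_{i})$. The left-hand side is $Z(G)$ and the right-hand side has index at most $|G'|^{n}$ in $G$, so $Z(G)$ is an abelian subgroup of finite index, i.e.\ $G$ is virtually abelian. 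This is the one place where connectedness of NIP groups enters, and it genuinely cannot be dropped: extraspecial $p$-groups are finite-by-abelian but not virtually abelian, and accordingly they are not NIP.

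I do not expect a hard step here; the subtle point I would be careful about is that Baldwin--Saxl must be applied \emph{after} the passage to a definable finite-index subgroup with finite derived subgroup, not before. In a general virtually finite-by-abelian group $Z(G)$ may have infinite index (e.g.\ an infinite elementary abelian $p$-group extended by an involution acting by inversion, which is still virtually abelian), so one cannot conclude directly in $G$; this is precisely the role of Remark \ref{definability}, and the only thing to check is the routine fact that the definable finite-index subgroup it furnishes inherits all three hypotheses ($\omega$-categoricity, NIP, finite burden).
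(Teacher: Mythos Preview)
Your argument is correct and takes a genuinely different route from the paper's. The paper invokes Shelah's theorem that in an NIP theory the absolute connected component $G^{00}$ exists; by $\omega$-categoricity $G^{00}$ is definable of finite index, so one may assume $G$ is connected, and then (after Theorem~\ref{groups} and Remark~\ref{definability}) every centraliser has finite index, hence equals $G$ by connectedness, so $G$ is abelian. You instead pass to a definable finite-index subgroup with finite derived subgroup and apply Baldwin--Saxl to the uniformly definable family of centralisers, concluding directly that $Z(G)$ has finite index. Both uses of NIP are legitimate; Baldwin--Saxl is arguably the more elementary tool, and your approach avoids the machinery of $G^{00}$ entirely. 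One small point: your sentence ``this is the one place where connectedness of NIP groups enters'' is misleading, since your argument never actually uses connectedness---it is the \emph{paper's} argument that does. What you use is Baldwin--Saxl, and it would be clearer to say so.

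The caveat you flag about passing first to a definable subgroup with finite derived subgroup is well taken and correctly handled; the inheritance of $\omega$-categoricity, NIP and finite burden under definable subgroups is indeed routine.
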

\begin{proof}
Let $G$ be an $\omega$-categorical NIP group of finite burden. 
By a result of Shelah, the absolute connected component 
$G^{00}$ (i.e.\ the smallest type-definable subgroup of
$G$ of bounded index) exists (see \cite[Theorem 6.1]{HPP} for a proof). By $\omega$-categoricity, $G^{00}$ is definable and hence of
finite index in $G$, so we may assume that
$G$ is connected. Then $G$ is finite-by-abelian by Remark \ref{definability}. Thus, the centralizer of any element
in $G$ has finite index in $G$, hence, by connectedness, is equal to $G$. 
This means that $G$ is abelian.
\end{proof}

\bt\label{rings}
An $\omega$-categorical ring of finite burden is virtually finite-by-null. 
\et
\bpf This is immediate from Theorem \ref{trivial_by_finite}, as multiplication is a definable bilinear map.
\epf
As for groups, we get a corollary:
\begin{corollary}\label{dp_min_rings}
An $\omega$-categorical NIP ring of finite burden is virtually null. 
\end{corollary}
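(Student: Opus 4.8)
The plan is to mimic the proof of Corollary \ref{dp_min_gps}, with one extra preliminary step: since the additive connected component of $R$ need not be closed under multiplication, one first descends to a definable finite-index subring on which the multiplication is almost trivial.

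By Theorem \ref{rings} together with (the proof of) Remark \ref{definability}, the ring $R$ has a definable subring $R_0$ of finite additive index and a finite ideal $I\trianglelefteq R_0$ with $R_0\cdot R_0\subseteq I$. Since $R$ is NIP, so is the definable abelian group $(R_0,+)$, and by Shelah's theorem (as quoted in the proof of Corollary \ref{dp_min_gps}; see \cite[Theorem 6.1]{HPP}) its absolute connected component $T:=(R_0,+)^{00}$ exists. Being type-definable over the finitely many parameters defining $R_0$, it is definable by $\omega$-categoricity, hence of finite index in $(R_0,+)$ and a fortiori in $(R,+)$; by definition $T$ has no proper definable subgroup of finite index.

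To finish, fix $b\in R_0$. The map $x\mapsto bx$ is a homomorphism from $(R_0,+)$ to the finite group $(I,+)$, so its restriction to $T$ has definable kernel of finite index in $T$, which is therefore all of $T$; hence $bT=\{0\}$. Symmetrically $Tb=\{0\}$, so $T$ is an ideal of $R_0$ with $T\cdot T=\{0\}$ --- a null subring of finite index in $R$. Thus $R$ is virtually null.

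The only delicate point is the first reduction: one needs a definable finite-index subring $R_0$ whose square lies in a finite set, so that left and right multiplications take values in a finite group and connectedness of $T$ can force them to vanish on $T$. Note also that it is $(R_0,+)^{00}$, and not $(R,+)^{00}$, that must be used, as the latter need not be a subring. The remaining steps are formal.
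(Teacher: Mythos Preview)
Your proof is correct and follows essentially the same route as the paper: descend to a definable finite-index piece on which multiplication lands in a finite group, then use additive connectedness to kill it. The paper reverses the order of your two reductions --- it first replaces $R$ by $(R,+)^{00}$ and only then invokes Remark~\ref{definability} (which, applied to a connected ring, yields $R_0=R$, so the connected ring is itself finite-by-null), and then finishes exactly as you do.

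Your final caveat, however, is mistaken: $(R,+)^{00}$ \emph{is} automatically a two-sided ideal of $R$, hence a subring. Indeed, for any $r\in R$ the set $\{x\in R:rx\in R^{00}\}$ is a type-definable subgroup, and the map $x\mapsto rx+R^{00}$ embeds its cosets into $R/R^{00}$, so it has bounded index and therefore contains $R^{00}$; thus $rR^{00}\subseteq R^{00}$, and symmetrically on the other side. So the paper's shortcut of passing directly to $(R,+)^{00}$ is legitimate, and your detour through $R_0$ --- while harmless --- is not actually needed.
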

\begin{proof}
 Let $R$ be such a ring. We may again assume that $R$ is connected (in the sense of the additive group).
 Then $R$ is finite-by-null by Remark \ref{definability}. Hence, the left annihilator of any element in $R$ has finite
 index in $R$, and must be equal to $R$ by connectedness. This shows that $R$ is null.
\end{proof}

\section{Questions and concluding remarks}\label{q+r}
One can ask various questions about generalizations of the above results
to more general contexts, such as strong or NTP$_2$ theories. For example, one can ask:
\begin{question}\label{q}
 Are $\omega$-categorical strong groups \\
 (1) virtually nilpotent-by-finite?\\
 (2) virtually abelian-by-finite?
\end{question}
An analogue of Question \ref{q}(1) for rings has positive answer by Theorem \ref{NTP2rings} below.
As to the stronger version, we do not know:
\begin{question}
  Are $\omega$-categorical strong rings null-by-finite?
\end{question}

The proof below is a modification of the proof of Theorem 2.1 from \cite{Kr},
generalizing that result from the NIP to the NTP$_2$ context.
\begin{theorem}\label{NTP2rings}
Every $\omega$-categorical NTP$_2$ ring is nilpotent-by-finite. 
\end{theorem}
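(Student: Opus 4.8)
The plan is to run an induction similar to the proof of Proposition \ref{group-nilpotent}, but now we may use that result (together with its corollaries) as a black box, since every $\omega$-categorical ring of finite burden is in particular an $\omega$-categorical group of finite burden on the additive side, and $\omega$-categorical NTP$_2$ rings need not have finite burden. So the first thing I would do is reduce to the case of a characteristically simple ring. More precisely, using the chain of characteristic ideals with characteristically simple quotients and the fact (Remark \ref{definability}-style reasoning, together with the NTP$_2$ analogue of Lemma \ref{criterion} for rings) that the class of $\omega$-categorical NTP$_2$ rings is closed under definable subrings and quotients by definable ideals, I would set up a minimal-counterexample argument: pick a counterexample $R$, reduce to the situation where the additive group has no finite characteristic ideals of interest, and try to derive a contradiction from the existence of an infinite characteristically simple quotient that is not nilpotent-by-finite.

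Next I would analyze what an infinite, $\omega$-categorical, characteristically simple NTP$_2$ ring can look like. The additive group of such a ring is an $\omega$-categorical abelian group, and by Wilson-type structure theory on the ring side (this is the analogue of Fact \ref{Wilson}; Baldwin--Rose and the work cited in the introduction give the relevant classification for $\omega$-categorical rings), the characteristically simple pieces should be either null rings on elementary abelian $p$-groups, or matrix-type rings over finite simple rings, or certain ``perfect'' analogues. The NTP$_2$ hypothesis should kill the wild function-ring examples exactly as Remark \ref{B(F)} does for groups: one builds an array of idempotent-like or nilpotent-like elements whose left/right annihilators violate \cite[Theorem 2.4]{CKS}. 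So after this step the only surviving characteristically simple rings are the ``tame'' ones — roughly, null rings and finite-by-(something controllable).

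The heart of the argument, then, is to leverage Proposition \ref{group-nilpotent} (or rather Theorem \ref{groups}) applied to a suitable associated group, following Krupiński's strategy from \cite{Kr, Kr11}: from the ring $R$ one forms, e.g., the group of upper-unitriangular matrices over $R$, or more directly uses that a virtually-finite-by-abelian conclusion about a definable group forces multiplication in $R$ to be ``almost trivial'' on a finite-index subring. Concretely, if the additive group already has finite burden one could invoke Theorem \ref{rings} directly; in the general NTP$_2$ case one instead passes to the characteristically simple quotients, shows each is either null or handled by the classification, and then reassembles via the solvable-radical argument exactly as in the Claims of Proposition \ref{group-nilpotent} — showing the soluble radical is trivial, that almost-centralizers of definable ideals behave well (using Fact \ref{Csymmetry} and Proposition \ref{almost_trivial} for the multiplication bilinear form), and that no infinite characteristically simple non-nilpotent quotient can survive. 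The main obstacle, I expect, is the analogue of the final claims of Proposition \ref{group-nilpotent}: controlling the minimal normal subgroups / ideals inside a putative bad characteristically simple ring without the finite-burden hypothesis, i.e.\ extracting enough from NTP$_2$ alone (via the \cite{CKS} chain conditions on uniformly definable annihilator ideals) to force finiteness and hence a contradiction. That is where Krupiński's NIP proof used connected components, and the work will be to replace that with the principal-indiscernible-sequence machinery of Section \ref{vat} or with a direct NTP$_2$ chain-condition argument.
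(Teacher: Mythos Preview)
Your proposal has a genuine gap: the main tools you want to invoke --- Theorem \ref{groups}, Proposition \ref{group-nilpotent}, and the principal-indiscernible machinery of Section \ref{vat} --- all require finite burden, and you yourself note that an $\omega$-categorical NTP$_2$ ring need not have finite burden. Passing to upper-unitriangular matrix groups over $R$ does not help: that group will have finite burden only if $R$ does. So none of the structural results from the body of the paper are available here, and the ``Wilson-type classification for rings'' you invoke does not exist in the form you need. You correctly identify at the very end that what is really required is ``a direct NTP$_2$ chain-condition argument'' replacing Krupi\'nski's use of connected components, but you stop short of saying what that argument is; this is precisely the content of the theorem, not a detail to be filled in later.

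The paper's proof is much more concrete and does not touch the finite-burden machinery at all. Following Krupi\'nski, one reduces to showing that a \emph{semisimple} $\omega$-categorical NTP$_2$ ring $R$ is finite; such an $R$ embeds as a subring of a product $\prod_{i\in I}R_i$ of finite rings with only finitely many isomorphism types. Assuming $R$ infinite, Krupi\'nski's argument already produces, for any $N$, coordinates $i_0,\ldots,i_{N-1}$ and non-nilpotent $r_j\in R_{i_j}$ with certain prescribed projections realised in $R$. The new point is this: the principal two-sided ideals $RxR$ are uniformly definable by $\omega$-categoricity, so \cite[Theorem 2.4]{CKS} gives a bound on how badly a family of such ideals can fail to be ``reducible''. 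One then takes $N=nm$, groups the coordinates into $n$ blocks of $m$, and for each $j<n$ sets $b_j$ to be the sum of the elementary elements $s_{j',k}$ over all $j'\neq j$ and $k<m$. A short computation with projections shows that $\bigcap_{j\neq j_0}Rb_jR$ contains $m$ cosets of $\bigcap_j Rb_jR$ for every $j_0$, and since $n,m$ were arbitrary this violates the NTP$_2$ chain condition. This explicit ideal construction is the missing idea in your outline.
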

\begin{proof}\setcounter{claim}{0}
As in \cite{Kr}, it is enough to show that a semisimple $\omega$-categorical
NTP$_2$ ring $R$ is finite, and we can assume that $R$ is a subring of $\prod_{i\in I}R_{i}$, where each $R_{i}$ is finite, and 
$|\{R_{i}:i\in I\}|<\omega$. Let $\pi_{i}$ be the projection onto the $i$-th coordinate.
For $i_{0},\dots,i_{n}\in I$ and $r_{0}\in R_{i_{0}},\dots,r_{n}\in R_{i_{n}}$, we define
$$R_{i_{0},\dots,i_{n}}^{r_{0},\dots,r_{n}}=\left\{r\in R : \bigwedge_{j=0}^{n}\pi_{i_{j}}(r)=r_{j}\right\}.$$
Suppose for a contradiction that $R$ is infinite.  Again as in \cite{Kr}, we get the following claim:
\begin{claim}
 For any $N\in \omega$ there are pairwise distinct $i(0),\dots,i(N-1)\in I$
 and non-nilpotent elements $r_i\in R_i$ for $i<N$ such that the sets
 $$R_{i_{0},\dots,i_{N-1}}^{r_{0},0\dots,0},R_{i_{0},\dots,i_{n}}^{0,r_1\dots,0},\dots,
 R_{i_{0},\dots,i_{n}}^{0,0\dots,r_{N-1}}$$ are all non-empty.
 \end{claim}

 Notice that, by $\omega$-categoricity, the principal two-sided ideals $RxR$ for $x\in R$ 
 are uniformly definable. Hence, by \cite[Theorem 2.4]{CKS} and compactness, we obtain 
 in particular that in order to contradict NTP$_2$ it is enough to 
 find for any $n,m<\omega$ elements $b_0,\dots,b_{n-1}$ such
 that 
 \begin{equation}\big|\bigcap_{j\in n\backslash \{j_0\}} Rb_jR:
 \bigcap_{j\in n} Rb_jR\big|\geq m\tag{*}\end{equation}
 for any $j_0<n$ (where $n=\{0,1,\dots,n-1\})$.
 So fix any  $n,m<\omega$, and for $N=nm$ choose $i_j$ and $r_j$ as in the claim.
 Let $(i_{j,k})_{j<n,j<m}$ be another enumeration of $(i_j)_{j<N}$, and let
 $(r_{j,k})_{j<n,k<m}$ be the corresponding enumeration of $(r_j)_{j<N}$
 and  $(\pi_{j,k})_{j<n,k<m}$ the corresponding enumeration of $(\pi_j)_{j<N}$.
 For any $j_0<n,k_0<m$ let $s_{j_0,k_0}\in R$ be such that $\pi_{j,k}(s_{j_0,k_0})=
 0$ for $(j,k)\neq (j_0,k_0)$ and $\pi_{j_0,k_0}(s_{j_0,k_0})=
 r_{j_0,k_0}$.
 Put $b_j=\sum_{j'\neq j, k<m}s_{j',k}$ for all $j<n$.
 \begin{claim}
  $|\bigcap_{j\in n\backslash \{j_0\}} Rb_jR:
 \bigcap_{j\in n} Rb_jR|\geq m$ for any $j_0<n$.
 \end{claim}
\begin{proof}
Fix any $j_0<n$ and put $b=b_0b_1\dots b_{j_0-1}b_{j_0+1}b_{j_0+2}\dots b_{n-1}$.
Notice that for any $r\in \bigcap_{j\in n} Rb_{j}R$
and $k<m$ we have that $\pi_{j_0,k}(r)=0$. On the other hand, for distinct 
$k_1,k_2<m$ we have that 
$$\pi_{j_0,k_1}(s_{j_0,k_1}b-s_{j_0,k_2}b)=
\pi_{j_0,k_1}(s_{j_0,k_1}b)=\pi_{j_0,k_1}(s_{j_0,k_1})\pi_{j_0,k_1}(b)=
r_{j_0,k_1}r_{j_0,k_1}^{n-1}=r_{j_0,k_1}^{n}\neq 0.$$ 
Hence the elements 
$$s_{j_0,0}b,s_{j_0,1}b,\dots,s_{j_0,m-1}b\in 
\bigcap_{j\in n\backslash \{j_0\}}Rb_jR$$ are in pairwise distinct cosets
of $\bigcap_{j\in n} Rb_jR$.  
\end{proof}
By the claim and $(*)$ we obtain a contradiction.
\end{proof}

\end{document}